\numberwithin{equation}{section}
\newtheorem{thm}{Theorem}[section]
\newtheorem{theorem}[thm]{Theorem}
\newtheorem{lemma}[thm]{Lemma}
\newtheorem{corollary}[thm]{Corollary}
\newtheorem{proposition}[thm]{Proposition}
\newcommand\Div{\operatorname{Div}}
\newcommand{\Proj}{\operatorname{Proj}}
\newcommand{\Spec}{\operatorname{Spec}}
\newcommand{\Pic}{\mbox{\rm Pic}\kern 1pt}
\newcommand{\cO}{{\mathcal O}}
\newcommand{\m}{{\mathfrak m}}
\newcommand{\Frac}{\operatorname{Frac}}
\newcommand{\codim}{{\operatorname{codim}}}
\newcommand{\Xm}{{X_{\mathrm{min}}}}
\newcommand{\sm}{{\mathrm{sm}}}
\newcommand{\sh}{{\mathrm{sh}}}
\newcommand{\sep}{{\mathrm{sep}}}
\newcommand{\Mor}{{\operatorname{Mor}}}
\begin{document}

\date{\today}

\title{N\'eron models of algebraic curves} 

\author{Qing Liu}
\author{Jilong Tong}

\address{Universit\'e de Bordeaux 1, Institut de Math\'ematiques de 
Bordeaux, CNRS UMR 5251, 33405 Talence, France} 
\email{Qing.Liu@math.u-bordeaux1.fr}
\email{Jilong.Tong@math.u-bordeaux1.fr} 

\subjclass[2000]{14H25, 14G20, 14G40, 11G35} 

\begin{abstract} Let $S$ be a Dedekind scheme with field of functions $K$. We show that if $X_K$ is a smooth connected proper curve of positive genus over $K$, then it admits a N\'eron model over $S$, {\it i.e.}, a smooth separated  model of finite type satisfying the usual N\'eron mapping property. It is given by the smooth locus of the minimal proper regular model of $X_K$ over $S$, as in the case of elliptic curves. When $S$ is excellent, a similar result holds for connected smooth affine curves different from the affine line, with locally finite type N\'eron models.
\end{abstract} 

\dedicatory{Dedicated to Michel Raynaud on 
the occasion of his seventy-fifth birthday} 

\keywords{N\'eron model, curve, good reduction.} 

\maketitle 

\begin{section}{Introduction}

In 1964, A. N\'eron \cite{Neron} introduced the notion 
of N\'eron models (see Definition~\ref{def-nm}) of abelian varieties over the 
fraction field of a Dedekind domain, and proved the existence of
these models (see the introduction of 
\cite{BLR} for a detailed presentation). Since then, this notion 
has been generalized to smooth commutative algebraic groups and to torsors under these groups (see \cite{BLR}, \S 6.5). In this work we investigate the case of smooth proper or affine curves. 

Let $S$ be a \emph{Dedekind scheme}, that is, a {noetherian regular connected scheme of dimension $1$}. Let $K=K(S)$ be its field of functions. Let $X_K$ be a smooth connected curve over $K$. When $X_K$ is proper of positive genus, a canonical smooth
model is the smooth locus $X_\sm$ of the minimal proper 
regular model of $X_K$ over $S$. When $X_K$ is an
elliptic curve, it is well known that $X_\sm$ is the N\'eron model of $X_K$ (see \cite{Neron} or \cite{Liu}, 10.2.14). The first main result of this work is a generalization of the latter fact to higher genus. 

\begin{theorem}\label{pj-nm-it}
{\rm (Theorem \ref{pj-nm})} Let $X_K$ be a proper smooth connected curve of positive genus over $K$. Then the smooth locus $X_\sm$ of the minimal proper regular model of $X_K$ over $S$ is the N\'eron model of $X_K$.   
\end{theorem}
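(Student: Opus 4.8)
\emph{Plan.} Write $X$ for the minimal proper regular model of $X_K$ over $S$, so that $X_\sm\subseteq X$ is open, smooth separated of finite type over $S$, with generic fibre $X_K$; one must verify the N\'eron mapping property, i.e.\ that for every smooth $S$-scheme $Y$ and every $u\in\Mor_K(Y_K,X_K)$ there is a unique $\bar u\in\Mor_S(Y,X_\sm)$ with $\bar u_K=u$. Uniqueness is formal: $Y$ is regular, being smooth over the regular one-dimensional scheme $S$, hence $S$-flat, so $Y_K$ is schematically dense in $Y$; as $X_\sm$ is separated, two extensions of $u$ agree. For existence one reduces at once to $S=\Spec\cO$ with $\cO$ a discrete valuation ring --- the minimal regular model commutes with localisation on $S$, and an $S$-morphism glues over a Zariski cover of $S$ --- and we fix a uniformizer $\pi$ and write $s$ for the closed point.

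\emph{Step 1: extension in codimension $\le 1$.} Let $\xi\in Y$ have codimension $\le 1$; if it lies over the generic point of $S$ there is nothing to do, so assume $\xi$ is the generic point of a component of $Y_s$. Then $\cO_{Y,\xi}$ is a discrete valuation ring in which $\pi$ is a uniformizer, since $Y_s$ is reduced; and since $X\to S$ is proper the valuative criterion extends $u$ to a morphism $\Spec\cO_{Y,\xi}\to X$. At its image point $x$ the local homomorphism $\cO_{X,x}\to\cO_{Y,\xi}$ carries $\pi$ to a uniformizer, so $\pi\notin\m_x^2$, whence --- $X$ being regular --- $X_s$ is regular at $x$; together with the separability of $\kappa(x)$ over $\kappa(s)$, inherited from $\kappa(\xi)$ (the function field of the smooth $s$-scheme $Y_s$), this makes $X\to S$ smooth at $x$, i.e.\ $x\in X_\sm$. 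Hence $u$ extends to a morphism $u_U\colon U\to X_\sm$ on an open $U\subseteq Y$ containing every point of codimension $\le 1$, so with $\codim(Y\setminus U,Y)\ge 2$. Composing with $X_\sm\hookrightarrow X$, it now suffices to show that the resulting rational $S$-map $\phi$ from $Y$ to $X$ is everywhere a morphism: the same $\pi\notin\m^2$ argument, applied to any morphism from a smooth $\cO$-scheme to $X$, shows that $\phi$ then automatically factors through $X_\sm$.

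\emph{Step 2: reduction to a regular fibred surface.} The indeterminacy locus $Z$ of $\phi$ is closed in $Y_s$, of codimension $\ge 2$ in $Y$. The question of extending $\phi$ at a point $z\in Z$ is local, and I would use that $Y$ is smooth over $\cO$, hence locally $\mathbb{A}^n_\cO$ near $z$, to restrict to regular closed subschemes $W\subseteq Y$ of relative dimension $1$ over $\cO$ through $z$ along which $\phi$ is defined at all points of codimension $\le 1$. Granting the surface case below, each $\phi|_W\colon W\dashrightarrow X$ is then a morphism, in particular defined at $z$; since such slices $W$ sweep out a neighbourhood of $z$ while the resulting extensions agree on overlaps (schematic density of $W_i\cap W_j$ away from $z$, plus separatedness of $X$), and inducting on $\dim Z$, one obtains an extension of $\phi$ over $z$. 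Thus the general case reduces to the one where $Y$ is itself a regular fibred surface over $S$.

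\emph{Step 3: the surface case --- the main obstacle.} Here minimality of $X$ is essential: for a non-minimal regular model the statement fails, since one may blow up a smooth point of $X_s$ and then no section can be forced into the new $(-1)$-component. Let $Y$ be a regular fibred surface and $\phi$ a rational $S$-map from $Y$ to $X$ defined at all points of codimension $\le 1$. Let $\Gamma\subseteq Y\times_S X$ be the closure of the graph of $\phi$; the first projection $p\colon\Gamma\to Y$ is proper and birational, an isomorphism over $Y_K$ and over the complement of the finitely many indeterminacy points. Normalising $\Gamma$ and resolving, one gets a regular fibred surface $\widetilde{\Gamma}$ with a composition of blow-ups $\rho\colon\widetilde{\Gamma}\to Y$ and a morphism $\widetilde{q}\colon\widetilde{\Gamma}\to X$; it suffices to prove that $\widetilde{q}$ factors through $\rho$, for then the induced morphism $Y\to X$ restricts to $u$, and by Step 1 it lands in $X_\sm$. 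If $\widetilde{q}$ did not factor through $\rho$, then after contracting all $(-1)$-curves common to $\rho$ and $\widetilde{q}$ one would reach a stage at which the outermost $\rho$-exceptional curve $C$ --- a $(-1)$-curve lying in $\widetilde{\Gamma}_s$ --- is not contracted by $\widetilde{q}$; then $\widetilde{q}(C)$ is a rational component $D$ of $X_s$, and a computation relating $C^2=-1$, the cycle-theoretic pull-back $\widetilde{q}^{\ast}D$, the push-forward $\widetilde{q}_{\ast}C$, adjunction on $\widetilde{\Gamma}$, and the constraint $g(X_K)=p_a(X_s)\ge 1$ forces $D^2=-1$, contradicting minimality of $X$. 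I expect carrying out this intersection-theoretic dichotomy uniformly --- in particular in positive residue characteristic and for possibly non-reduced or non-split components of $X_s$ --- to be the crux of the whole argument; once it is in place, Steps 1--2 assemble to the conclusion that $X_\sm$ is the N\'eron model of $X_K$.
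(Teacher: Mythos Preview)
Your overall strategy --- extend in codimension one via the valuative criterion, reduce to two-dimensional slices, then invoke minimality of $X$ through an intersection-theoretic argument on the graph resolution --- is natural and genuinely different from the paper's route. The paper instead embeds $X_K$ as a closed curve in $\Pic^1_{X_K/K}$ (Proposition~\ref{embedding}), which is a torsor under the Jacobian and hence admits a N\'eron model; a general principle (Proposition~\ref{2nd-app-of-fY-sm}) then produces a N\'eron model $N$ for $X_K$, and a short compactification argument identifies $N$ with $X_\sm$. The technical heart of that principle is Corollary~\ref{fY-reg}, which lifts a morphism from a smooth $S$-scheme to a normal relative curve through the minimal desingularization; this in turn rests on Proposition~\ref{fY-sm}, and ultimately on a local result of Raynaud about finite extensions of complete local rings. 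So the paper never analyses rational maps or exceptional curves of a graph resolution at all.

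Your proposal has two genuine gaps. In Step~2, knowing that $\phi|_W$ extends across $z$ for every two-dimensional slice $W$ does not by itself yield an extension of $\phi$ on a neighbourhood of $z$ in $Y$: two such slices may intersect only at $\{z\}$ in the special fibre, so there is no dense locus on which to compare the putative extensions, and the gluing as written does not go through. One would need instead to control the fibre of the graph closure over $z$ directly, or to argue (as the paper does in Lemma~\ref{bertini} and Proposition~\ref{fY-sm}) with a single slice but inside a different framework.

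More seriously, the computation in Step~3 is only asserted, and the concrete claim that the listed ingredients force $D^2=-1$ is not correct as stated. If for instance $X_s$ is an irreducible nodal rational curve (type $I_1$ for an elliptic curve) then $D=X_s$ has $D^2=0$, so no contradiction with minimality arises from self-intersection alone; one has to exclude this configuration by some other mechanism. The natural refinement is to compare $K_{\widetilde\Gamma/S}\cdot C=-1$ with $\widetilde q^{\,*}K_{X/S}\cdot C=e\,(K_{X/S}\cdot D)\ge 0$ via nefness of $K_{X/S}$ on the minimal model, bounding the difference by an effective ramification divisor $R$; but then one needs $R\cdot C\ge 0$, which fails precisely when $C$ is a component of $R$, and in positive characteristic with possibly inseparable $u_K$ even the effectivity of $R$ is delicate. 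You rightly flag this as the crux, but as it stands it is a real gap rather than a deferred routine computation; the paper sidesteps it entirely by passing through the Jacobian and Raynaud's lemma.
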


When $S$ is excellent, {we actually prove a slightly more general result: for a regular proper connected curve $X_K/K$ of positive arithmetic genus, the smooth locus of $X_K$ admits a N\'eron model over $S$, equal to the smooth locus of the minimal proper regular model of $X_K$ over $S$.}
 
As an immediate consequence of Theorem~\ref{pj-nm-it}, we have the next corollary.  

\begin{corollary} Let $X_K$ be as in Theorem~\ref{pj-nm-it}. 
Let $Y$ be a smooth scheme over $S$ and let $f_K\colon Y_K\to X_K$ be a morphism of $K$-schemes. Then
 \begin{enumerate}[{\rm (1)}] 
  \item the morphism $f_K$ extends {uniquely} to a morphism of $S$-schemes $Y\to X_\sm$; 
  \item {\rm (Corollary~\ref{cov-gr})} if $Y$ is proper over $S$ ({\it i.e.}, $Y_K$ has good reduction over
$S$) and $f_{K}$ is dominant, then $X_\sm$ is proper over $S$ and $X_K$ 
has good reduction over $S$. 
  \end{enumerate}
\end{corollary}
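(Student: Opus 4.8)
The plan is that, granting Theorem~\ref{pj-nm-it}, both assertions are formal consequences of the N\'eron mapping property of $X_\sm$, so the argument will mostly be an unwinding of definitions; the only place calling for a little care is the passage from the existence of the extension to good reduction in part~(2).

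For part~(1): since $Y$ is smooth over $S$ and, by Theorem~\ref{pj-nm-it}, $X_\sm$ is the N\'eron model of $X_K$, the defining property (Definition~\ref{def-nm}) asserts precisely that the restriction map $\Mor_S(Y, X_\sm) \to \Mor_K(Y_K, X_K)$ is bijective. Thus $f_K$ extends to a unique $S$-morphism $f\colon Y \to X_\sm$, and there is nothing further to do. I would only remark that no finiteness or properness hypothesis on $Y$ is needed here, merely smoothness over $S$.

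For part~(2): first, $Y$ being proper over $S$, its generic fibre $Y_K$ is proper over $K$; since $X_K$ is separated, $f_K(Y_K)$ is closed in $X_K$, and being dense (as $f_K$ is dominant) it equals $X_K$, so $f_K$ is surjective. Applying part~(1), extend $f_K$ to $f\colon Y \to X_\sm$ and compose with the open immersion $X_\sm \hookrightarrow \Xm$ into the minimal proper regular model, obtaining $g\colon Y \to \Xm$. As $Y$ is proper over $S$ and $\Xm$ is separated over $S$, $g$ is proper, hence $g(Y)$ is closed in $\Xm$. Now $\Xm$ is integral (it is normal, being regular, and irreducible, since it is flat over $S$ with irreducible generic fibre $X_K$), with generic point $\eta = \eta_{X_K}$, and $g$ restricts to the surjection $f_K$ on generic fibres, so $\eta \in g(Y)$. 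A closed subset of the irreducible space $\Xm$ containing $\eta$ is all of $\Xm$, so $g(Y) = \Xm$; since $g$ factors through $X_\sm$, this forces $X_\sm = \Xm$. Consequently $\Xm$ equals its own smooth locus, i.e.\ $\Xm$ is smooth over $S$ --- which is exactly the assertion that $X_K$ has good reduction over $S$ --- and $X_\sm = \Xm$ is proper over $S$.

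I expect the only mild subtlety to be this final collapsing step: one must notice that $X_\sm$, although a priori possibly non-proper, is an open \emph{dense} subscheme of the irreducible scheme $\Xm$, so that a proper (hence closed-image) morphism from $Y$ whose image both lies in $X_\sm$ and contains $\eta$ can only have image all of $\Xm$, forcing $X_\sm = \Xm$. An equivalent route is to invoke the standard fact that a separated finite-type $S$-scheme admitting a surjection from a proper $S$-scheme is itself proper, concluding that $X_\sm$ is proper, hence closed in $\Xm$, hence --- being open and nonempty in the connected scheme $\Xm$ --- equal to it. No genuine obstruction arises either way.
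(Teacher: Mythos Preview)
Your proof is correct and matches the paper's approach. The paper's argument for part~(2) is precisely your ``equivalent route'': it works directly in $X_\sm$, observes that $f(Y)$ is closed (since $Y$ is proper and $X_\sm$ separated) and dense (since it contains $X_K$), hence $f(Y)=X_\sm$, so $X_\sm$ is proper; your primary route via the embedding $X_\sm\hookrightarrow \Xm$ is a harmless variant that trades the lemma ``surjective image of proper is proper'' for the observation that a closed subset of the irreducible scheme $\Xm$ containing the generic point is everything.
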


In the second part of this paper (\S \ref{af-nm-s}--\S
\ref{nm-rat-gl}), we consider N\'eron lft-models (N\'eron model
locally of finite type) of smooth affine curves. 
The main result of this second part is 

\begin{theorem}{\rm (Theorem \ref{nm-af})} \label{nm-af-i} 
Suppose $S$ is excellent. Let $U_K$ be an affine smooth and geometrically 
connected curve over $K$, not isomorphic to $\mathbb A^1_K$. Then $U_K$ 
admits a N\'eron lft-model $U$ over $S$. 
\end{theorem}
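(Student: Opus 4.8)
The plan is to build the N\'eron lft-model explicitly, over each $\mathcal O_{S,s}$ (the local models then glue to one over $S$, as is standard for N\'eron models), and to verify the N\'eron mapping property directly for all smooth test schemes by leaning on the proper case, Theorem~\ref{pj-nm-it}. Let $X_K$ be the smooth projective model of $U_K$, so $U_K=X_K\setminus Z_K$ with $Z_K$ a non-empty reduced effective divisor; the hypothesis $U_K\not\cong\mathbb A^1_K$ says exactly that either $X_K$ has positive genus, or $X_K$ has genus $0$ and $\deg Z_K\ge 2$. We treat the (main) positive-genus case, commenting on the other at the end. Since any model produced will be smooth and separated over $S$ with generic fibre $U_K$, it is schematically dense there and the extension in the N\'eron mapping property is automatically unique (the test schemes being regular, hence reduced, with dense generic fibre); so it suffices to exhibit such a model through which every $K$-morphism $f_K\colon Y_K\to U_K$ from a smooth $S$-scheme $Y$ extends.

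\emph{Construction (over a discrete valuation ring $\mathcal O_{S,s}$).} Let $\mathcal X\to\Spec\mathcal O_{S,s}$ be the minimal regular model of $X_K$; being proper of positive genus, $X_K$ has $\mathcal X$ proper, and Theorem~\ref{pj-nm-it} identifies $X_\sm=\mathcal X_\sm$ with the N\'eron model of $X_K$. Let $\mathcal Z_0\subseteq\mathcal X$ be the reduced closure of $Z_K$: finite flat over the base, and an effective Cartier divisor since $\mathcal X$ is regular of dimension $2$. Form a tower $\cdots\to\mathcal X_{i+1}\to\mathcal X_i\to\cdots\to\mathcal X_0=\mathcal X$ in which $\mathcal X_{i+1}\to\mathcal X_i$ is the blow-up along the finite set $C_i$ of $S$-smooth closed points of the strict transform $\mathcal Z_i$ of $\mathcal Z_0$ lying in the special fibre; each $\mathcal X_i$ is again regular (the centres being regularly embedded) and each $\mathcal Z_i$ is finite flat over the base. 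Put $U_i:=(\mathcal X_i)_\sm\setminus\mathcal Z_i$. Since $C_i\subseteq\mathcal Z_i$, the isomorphism $\mathcal X_{i+1}\setminus E_{i+1}\cong\mathcal X_i\setminus C_i$ ($E_{i+1}$ the exceptional divisor) identifies $U_i$ with an open subscheme of $U_{i+1}$, so set $U:=\varinjlim_i U_i$: smooth, separated, locally of finite type, with generic fibre $U_K$. (For $E$ an elliptic curve and $Z_K=\{O\}$, this builds $U$ from the N\'eron model $X_\sm$ of $E$ by repeatedly blowing up the reductions of $O$ and removing its closure.)

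\emph{N\'eron mapping property.} Let $f_K\colon Y_K\to U_K$ with $Y$ smooth; shrink $Y$ to an affine of finite type. Regarded as a morphism $Y_K\to X_K$, $f_K$ extends uniquely, by Theorem~\ref{pj-nm-it}, to $\tilde f\colon Y\to X_\sm$. Put $W:=\tilde f^{-1}(\mathcal Z_0)$: closed in $Y$, disjoint from $Y_K$, hence inside the special fibre, and an effective Cartier divisor on $Y$ (as $\mathcal Z_0$ is Cartier on $\mathcal X$ and $\tilde f(Y)$ is not inside its support). Since $Y$ is regular with regular special fibre, near each point $y$ of $W$ the divisor $W$ is locally a positive multiple of the special fibre $Y_s$; in particular $\tilde f$ is constant along $Y_s$ near $y$, with value in $C_0=\mathcal Z_0\cap X_\sm\cap\mathcal X_s$, so that $\tilde f^{*}\mathcal I_{C_0}$ is locally principal (locally the ideal of $Y_s$). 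Hence $\tilde f$ lifts to $\tilde f_1\colon Y\to\mathcal X_1$; a look at the two standard charts of the blow-up shows $\tilde f_1$ lands in $(\mathcal X_1)_\sm$ and that the multiplicity of $\tilde f_1^{-1}(\mathcal Z_1)$ along $Y_s$ has strictly dropped. Repeating finitely often yields $\tilde f_N\colon Y\to(\mathcal X_N)_\sm$ with $\tilde f_N^{-1}(\mathcal Z_N)=\emptyset$, i.e.\ a morphism $Y\to U_N\subseteq U$ extending $f_K$. Therefore $U$ is the N\'eron lft-model of $U_K$.

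\emph{Genus $0$, and the chief difficulties.} If $X_K$ has genus $0$ then $\mathbb P^1_K$ has no N\'eron model and the first step above is unavailable. For $\deg Z_K=2$ one has $U_K\cong\mathrm{Pic}^1_{(X_K,Z_K)}$, a torsor under the one-dimensional torus $\mathrm{Pic}^0_{(X_K,Z_K)}$, and the existence of N\'eron lft-models for such torsors is known. For $\deg Z_K\ge 3$, the Abel--Jacobi map with modulus $Z_K$ realises $U_K$ as a closed subscheme of the torus-torsor $\mathrm{Pic}^1_{(X_K,Z_K)}$; one forms its reduced closure inside the N\'eron lft-model of that torsor (it retains the mapping property because the test schemes are reduced) and smooths the result, the delicate point being that the smoothening still has the mapping property. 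The main obstacles are, accordingly: the excellence hypothesis, needed at all for the existence of minimal regular models and resolutions; the local bookkeeping in the positive-genus case — that the tower $\mathcal X_i$ stays regular, that the lifts $\tilde f_i$ stay $S$-smooth, and that $\tilde f_i^{-1}(\mathcal Z_i)$ strictly decreases; and, in the genus $0$ hyperbolic case, finding a substitute for Theorem~\ref{pj-nm-it}, either via the torsor embedding together with a smoothening argument, or by a direct resolution-of-indeterminacy analysis of the rational map $Y\dashrightarrow\mathbb P^1_S$.
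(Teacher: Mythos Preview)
Your positive-genus argument is essentially the paper's: the blow-up tower is exactly the construction of \S\ref{construct-U}, and your multiplicity-drop verification is a concrete variant of the proof in \S\ref{proof-nm-open-st}. Two small points deserve care. First, the centers $C_i$ must be the closed points of $(\mathcal Z_i)_s$ lying in $(\mathcal X_i)_\sm$ with \emph{separable} residue field, not the points where $\mathcal Z_i\to S$ is smooth; if $\mathcal Z_0$ is ramified over $S$ at a separable smooth point $c$ (e.g.\ $\mathcal Z_0=V(t^2-\pi)$ on $\mathbb A^1_R$), your $\tilde f$ can hit $c$ but your $C_0$ would miss it. Second, gluing the local N\'eron lft-models to a global one is not automatic for lft-models (cf.\ Oesterl\'e's example in \cite{BLR}, 10.1/11, and Remark~\ref{not-ft}); the paper handles this in Lemma~\ref{af-nm-global}, which needs a fixed finite-type model $U^0$ embedding into every $U(s)$. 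In your situation $U^0=(\mathcal X_0)_\sm\setminus\mathcal Z_0$ over $S$ works, but this should be said.

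The genus-$0$ sketch has a genuine gap. Your claim that $\operatorname{Pic}^0_{(X_K,Z_K)}$ is a torus is correct only when $Z_K$ is \'etale over $K$. If $Z_K$ contains a purely inseparable point (possible whenever $\operatorname{char}K>0$), the generalized Jacobian is an extension of a torus by a nontrivial \emph{unipotent} group: for $Z_K=\Spec L$ with $L/K$ purely inseparable one has $\operatorname{Pic}^0_{(X_K,Z_K)}\cong \operatorname{Res}_{L/K}\mathbb G_m/\mathbb G_m$, which is wound unipotent of dimension $[L:K]-1$. For such groups the existence of a N\'eron lft-model over a global base is precisely Conjecture~I of \cite{BLR}, 10.3, and is open in dimension $\ge 2$; in dimension~$1$ it is exactly Corollary~\ref{app-conj-BLR}, which the paper \emph{deduces} from the theorem you are proving, so invoking it would be circular. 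Thus your torsor-embedding strategy cannot handle, for instance, $U_K=\mathbb P^1_K\setminus\{q_\infty\}$ with $q_\infty$ purely inseparable of degree $p$ --- the very case the paper singles out as new.

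The paper's route in genus~$0$ is accordingly quite different. When $Z_{\bar K}$ has at least two points, a finite (separable) extension makes $U_{K'}$ open in $\mathbb G_{m,K'}$, and Weil restriction (Proposition~\ref{neron-weil}, using Proposition~\ref{2nd-app-of-fY-sm} for the closed-curve-in-a-N\'eron-model step you also need) brings the model back down. When $Z_{\bar K}$ is a single inseparable point, the paper abandons group embeddings entirely and gives a direct construction (Proposition~\ref{prop-rat-nm2}): build a regular semi-stable model $P$ of $\mathbb P^1_K$ in which the closure of $Z_K$ avoids the smooth separable locus, and show $\sm(P/S)\setminus\overline{Z_K}$ satisfies the N\'eron mapping property by playing it off against auxiliary sections and the N\'eron lft-model of $\mathbb G_m$ (Proposition~\ref{nm-P}). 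This direct argument is one of the paper's main technical inputs and is not visible from your sketch.
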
 

Note that in general, the scheme $U$ in the above theorem is not of finite type. However, necessary and sufficient conditions (in terms of the points at infinity of
$U_K$) can be found in {Proposition}~\ref{af-nm-ft} to insure that $U$ is of finite type over $S$. 

The paper is organized as follows. Some basic properties of N\'eron
lft-models are assembled in \S \ref{basic-p}. In \S \ref{image-sm}, we
prove a crucial technical result (Proposition~\ref{fY-reg}) on the
image of a morphism  $f \colon Y\to X$ from a smooth $S$-scheme to a
normal relative curve over $S$. In \S \ref{pj-nm-s}, we prove the main theorem \ref{pj-nm} on the existence of N\'eron models for proper smooth  
curves of positive genus. In \S \ref{af-nm-s}, we study the N\'eron lft-models of
open subschemes of a curve having already a N\'eron lft-model 
(Theorem~\ref{nm-open-st}). \S \ref{rat-nm} is devoted to the existence of the 
N\'eron model of some special affine open subschemes of a smooth conic 
over local Dedekind schemes $S$. Finally we 
prove Theorem~\ref{nm-af-i} in \S \ref{nm-rat-gl}. 
\smallskip

The present work grew from a partial answer to a question 
asked by an anonymous
poster at {\tt mathoverflow.net/questions/110359/}, 
on the existence of N\'eron models of projective curves. {We would like to thank A. Javanpeykar and M. Raynaud for their interests in this work and especially for their comments which improve the presentation of the paper.}
\bigskip

\noindent {\bf Notation:} In all this paper, unless explicitly mentioned, the letter $S$ denotes a \emph{Dedekind scheme} (that is, a noetherian regular 
connected scheme of dimension $1$), $K$ denotes its field of functions $K(S)$.
Symbols {such} as $X_K, Y_K, U_K$ usually denote a scheme over $K$. On the other
  hand, for any $S$-scheme $X$, $X_K$ also denotes the generic fiber of $X$.
\end{section} 

\begin{section}{Basic properties} \label{basic-p} 

Let $S, K$ be as above. 

\begin{definition} Let $X_K$ be a separated algebraic variety ({\it i.e.} 
separated scheme of finite type) over $K$. A \emph{model of $X_K$ over $S$} 
is a {\bf locally finite type}, separated and flat\footnote{In this work, we do not require the models to be faithfully flat.} scheme over $S$ endowed with an isomorphism from its generic fiber to $X_K$. 
 \end{definition}

\begin{definition}[\cite{BLR} 10.1/1, 1.2/1]\label{def-nm} Let $X_K$ be a separated 
smooth algebraic variety over $K$. A \emph{N\'eron lft-model of 
$X_K$ over $S$} or an \emph{$S$-N\'eron lft-model of $X_K$} 
is a smooth model $X$ of $X_K$ over $S$ 
satisfying the following universal property, called 
{\emph{N\'eron mapping property}}:
\smallskip 

\leftskip 12pt \noindent 
for any smooth scheme $Y\to S$, the canonical map (once the generic
fiber of $X$ is identified with $X_K$) 
\begin{equation}\label{eq.def-neron}
\Mor_S(Y, X) \to \Mor_K(Y_K, X_K)
\end{equation}
is a bijection. 

\smallskip 
\leftskip 0pt 
 \noindent A {N\'eron lft-model} of finite type is called a \emph{N\'eron model}. 
\end{definition}

\begin{remark} \label{univ}
\begin{enumerate}
\item The universal property above implies the uniqueness
(up to a unique isomorphism) of the N\'eron lft-model if it exists. 
\item Let $X$ be a model of $X_K$. As $X$ is separated, the map \eqref{eq.def-neron} is always injective. So
it is enough to check the surjectivity for the N\'eron mapping property. By the injectivity, when $S$ is local, it is also enough to check the surjectivity with $Y$ smooth of finite type having irreducible fibers over $S$. 
\item Let $X_K$ be a separated smooth algebraic variety over $K$.
If each connected component of $X_K$ admits an $S$-N\'eron lft-model, then
$X_K$ has also an $S$-N\'eron lft-model given by the disjoint union of
the $S$-N\'eron lft-models of the connected components. 
{This holds similarly for N\'eron models}.
\end{enumerate}
\end{remark}

\begin{proposition} \label{nm-gen}
Let $X$ be an $S$-model of $X_K$. Let $S'\to S$ be a 
morphism of Dedekind schemes. {Denote by $K'$ the function field of $S'$.}
\begin{enumerate}[{\rm (1)}] 
\item Assume that the morphism $S'\rightarrow S$ is faithfully flat, and that $X_{S'}:=X\times_S S'$ is the N\'eron lft-model (resp. N\'eron model) of $X_{K'}:=X\times_{\Spec(K)}\Spec(K')$ over $S'$. Then $X$ is the N\'eron lft-model (resp. N\'eron model) of $X_K$ over $S$. 

\item Assume that $S'\rightarrow S$ can be written as a filtered inverse limit of {affine} smooth schemes of finite type over $S$, and that $X$ is the N\'eron lft-model (resp. N\'eron model) of $X_K$ over $S$. Then the base change $X_{S'}$ is the N\'eron lft-model (resp. N\'eron model) of $X_{K'}$ over $S'$.

\item Assume that $S'\rightarrow
    S$ is an extension of local Dedekind schemes of ramification index
    $1$\footnote{see \cite{BLR}, 3.6/1 for the definition, the residue
    extension is required to be (not necessarily algebraic) separable.} with $S$ \emph{excellent}, and that $X$ is the N\'eron
lft-model over $S$. Then, $X_{S'}$ is the N\'eron lft-model over
$S'$.
\end{enumerate}
  \end{proposition}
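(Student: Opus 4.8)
The plan is to prove (1), (2), (3) in this order, using each in the proof of the later ones. In all three parts the candidate model — $X$ itself in (1), and $X_{S'}:=X\times_S S'$ in (2) and (3) — is a smooth, separated, flat model of the relevant generic fibre, locally of finite type (and of finite type in the ``N\'eron model'' variants): in (2) and (3) these properties are stable under base change, while in (1) they descend along the faithfully flat quasi-compact morphism $S'\to S$ (quasi-compactness being automatic here). Since the models are separated, \eqref{eq.def-neron} is injective by Remark~\ref{univ}(2), so in each part it remains only to prove surjectivity. I will also use the following reformulation of the N\'eron property of $X$ over $S$: for any smooth $S$-scheme $T$ and any smooth $T$-scheme $Z$, the universal property of the fibre product combined with the N\'eron property applied to the smooth $S$-scheme $Z$ gives
\begin{equation}\label{eq.plan-moreover}
\Mor_T(Z,\,X\times_S T)\;=\;\Mor_S(Z,X)\;=\;\Mor_K(Z\times_S\Spec K,\,X_K).
\end{equation}

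For (1): given a smooth $S$-scheme $Y$ and $f_K\in\Mor_K(Y_K,X_K)$, the N\'eron property of $X_{S'}$ over $S'$ extends $f_{K'}$ to $g'\colon Y_{S'}\to X_{S'}$, and composing with the projection $X_{S'}\to X$ yields an $S$-morphism $h'\colon Y_{S'}\to X$. The two pullbacks of $h'$ along the projections $Y\times_S(S'\times_S S')\rightrightarrows Y\times_S S'$ are $S$-morphisms into the separated scheme $X$ which agree on the generic fibre over $S$ (there both restrict to $f_K$ base-changed to $K'\otimes_K K'$); as $Y\times_S(S'\times_S S')$ is flat over $S$, the ideal sheaf of the equalizer of these two morphisms is $\cO_S$-torsion (it vanishes on the generic fibre) and $\cO_S$-torsion free (it is a subsheaf of a flat $\cO_S$-module), hence zero, so the two pullbacks coincide. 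Since morphisms into the scheme $X$ satisfy fpqc descent, $h'$ descends to an $S$-morphism $h\colon Y\to X$, and $h_K=f_K$ because $Y_{K'}\to Y_K$ is an epimorphism of schemes. In the N\'eron model case one uses in addition that ``of finite type'' descends along fpqc morphisms.

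For (2): write $S'=\varprojlim_i S_i$ with each $S_i\to S$ affine, smooth and of finite type. Since $\Mor_{S'}(-,X_{S'})$ is a Zariski sheaf and smooth morphisms are locally of finite presentation, it suffices to prove surjectivity of \eqref{eq.def-neron} when $Y'\to S'$ is affine of finite presentation. By the standard limit formalism (EGA IV, \S 8, \S 17) such a $Y'$, together with a morphism $f_{K'}$ on its generic fibre, descends to a smooth affine $Y_i\to S_i$ of finite type and a morphism at finite level; then \eqref{eq.plan-moreover} applied with $T=S_i$, $Z=Y_i$ gives $\Mor_{S_i}(Y_i,X_{S_i})=\Mor_K(Y_i\times_S\Spec K,X_K)$, and passing to the colimit over $i$ identifies $\Mor_{S'}(Y',X_{S'})$ with $\Mor_K(Y'_{K'},X_K)=\Mor_{K'}(Y'_{K'},X_{K'})$ compatibly with \eqref{eq.def-neron}, so the latter is bijective; the bookkeeping of generic points across the limit is routine. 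In the N\'eron model case $X_{S'}$ is moreover of finite type over $S'$, hence a N\'eron model.

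For (3): write $S=\Spec R$, $S'=\Spec R'$, so that $R\subseteq R'$ are discrete valuation rings with $e(R'/R)=1$, the residue extension $k'/k$ is separable, and $R$ is excellent. The strategy is to reduce to (2) via N\'eron--Popescu desingularization, which says that a regular homomorphism of noetherian rings is a filtered colimit of smooth finite-type algebras. Since $R$ is excellent, $R\to\hat R$ is regular, so $\Spec\hat R$ is a filtered inverse limit of affine smooth finite-type $S$-schemes; by (2), $X\times_S\hat R$ is the N\'eron lft-model over $\Spec\hat R$. Next, $e(R'/R)=1$ together with separability of $k'/k$ implies that $\hat R\to\hat{R'}$ is again regular — the closed fibre $\Spec k'$ is geometrically regular over $k$ by separability of $k'/k$, and the generic fibre $\Spec\widehat{K'}$ is geometrically regular over $\widehat K$ because $\widehat{K'}/\widehat K$ is separable under these hypotheses — so (2) applies once more and $(X\times_S\hat R)\times_{\hat R}\hat{R'}=X\times_S\hat{R'}$ is the N\'eron lft-model over $\Spec\hat{R'}$. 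Finally $\Spec\hat{R'}\to S'$ is faithfully flat and $X\times_S\hat{R'}=(X\times_S S')\times_{S'}\Spec\hat{R'}$, so by (1), $X\times_S S'$ is the N\'eron lft-model over $S'$. I expect the technical heart of (3) — and of the proposition as a whole — to be the separability of $\widehat{K'}/\widehat K$: this is automatic in mixed characteristic and in equal characteristic zero (where $\widehat K$ has characteristic zero), while in equal characteristic $p$ it rests on Cohen's structure theorem and Mac Lane's separability criterion. This, together with the excellence hypothesis (which is exactly what allows the passage to the completions, where N\'eron--Popescu becomes applicable), is where the assumptions of (3) are genuinely used.
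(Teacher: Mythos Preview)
Your proof is correct and, for parts (1) and (2), follows essentially the same line as the paper: fpqc descent of morphisms for (1), and descent to a finite level in the limit system followed by the N\'eron mapping property for (2). Your explicit verification of the cocycle condition in (1) is exactly the ``direct application of fpqc descent'' the paper alludes to without details.

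For (3) your argument is correct but takes a detour compared with the paper. The paper invokes \cite{BLR}, 3.6/2 directly: under the hypotheses (ramification index $1$, separable residue extension, $R$ excellent), the map $R\to R'$ itself is already regular, so N\'eron--Popescu writes $R'$ as a filtered colimit of smooth finite-type $R$-algebras and a single application of (2) finishes. You instead first complete $R$ (using excellence to get $R\to\widehat R$ regular), then argue that $\widehat R\to\widehat{R'}$ is regular by checking the two fibres, then apply (2) twice and finally descend along $R'\to\widehat{R'}$ via (1). This works, but the separability of $\widehat{K'}/\widehat K$ that you flag as the technical heart is precisely what the proof of \cite{BLR}, 3.6/2 establishes---so you are in effect reproving that lemma inside your argument rather than citing it. The paper's route is shorter and avoids the extra pass through (1); yours has the merit of making the role of the completion and the separability issue fully explicit.
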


\begin{proof} (1) This is a direct application of fpqc descent, so we omit the details here. 

(2) Let $Y'$ be a smooth $S'$-scheme, and let $f_{K'}\colon Y_{K'}'\rightarrow X_{K'}$ be a morphism of $K'$-schemes. Without loss of generality, we may and do assume that the scheme $Y'$ is quasi-compact, hence is of finite presentation over $S'$. Consequently, 
the $S'$-scheme $Y'$ {(resp. the morphism $f_{K'}\colon Y_{K'}'\rightarrow X_{K'}$)} descends to an $S_0$-scheme $Y_0$ {(resp. to a morphism of $S_{0,K}$-schemes $f_{0,K}\colon Y_{0}\times_{S_0} S_{0,K}\rightarrow X_{S_0}\times_{S_0} S_{0,K}$)}
for some {(affine)} smooth morphism $S_0\to S$ (\cite{EGA}, \uppercase\expandafter{\romannumeral4} 8.8.2). Thus, we only need to prove that the morphism $f_{0,K}$ can be extended to a morphism $f_0\colon Y_0\rightarrow X\times_S S_0$. Consider the composite $Y_0\rightarrow S_0\rightarrow S$, which defines a smooth $S$-scheme. The morphism $f_{0,K}$ gives a morphism of $K$-schemes from $Y_0\times_{S}\Spec(K)$ to $X_K$. As $X$ is the {$S$-}N\'eron lft-model of $X_K$, the N\'eron mapping property implies a morphism of $S$-schemes $g\colon Y_0\rightarrow X$ making the following diagram commutative:
\[
\xymatrix{Y_{0}\ar[rd] \ar@/^1pc/[rr]^g\ar@{.>}[r]_{\exists f_0}& X_{S_0}\ar[d]\ar[r] & X\ar[d] \\ & S_0\ar[r]& S}
\] 
We obtain a morphism $f_0 \colon Y_0\rightarrow X_{S_0}$ of
$S_0$-schemes extending $f_{0,K}$, as required. 
{If $X$ is the $S$-N\'eron model of $X_K$, then $X_{S'}$ is of finite type over $S'$ and is the $S'$-N\'eron model of $X_{K'}$.}

(3) Write $S=\mathrm{Spec}(R)$ and $S'=\mathrm{Spec}(R')$. As $S$ is excellent, by \cite{BLR}, 3.6/2, the morphism $R\rightarrow R'$ is regular. Consequently, by a result of N\'eron ({\cite{BLR}, 3.6/8, see also \cite{Popescu}, 2.5 for a more general statement}), $R'$ can be written as a filtered inductive limit of smooth $R$-algebras of finite type. Therefore, $X_{S'}$ is the N\'eron lft-model over $S'$ by (2). 
\end{proof} 

\begin{corollary}\label{neron-l2g} 
Let $S$ be a Dedekind scheme with field of functions $K$, and let $X$
be an $S$-scheme locally of finite type {(resp. an $S$-scheme of finite type)} with smooth finite type generic fiber $X_K$. Then $X$ is the $S$-N\'eron lft-model of $X_K$ {(resp. $S$-N\'eron model of $X_K$)} if and only if for any closed point $s\in S$, $X\times_S \Spec(\mathcal{O}_{S,s})$ is the $\Spec(\cO_{S,s})$-N\'eron lft-model of $X_K$ {(resp. $\Spec(\cO_{S,s})$-N\'eron model of $X_K$)}. 
\end{corollary}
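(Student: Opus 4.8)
The plan is to reduce the global Néron property to the local one by exploiting that both morphism sets $\Mor_S(Y,X)$ and $\Mor_K(Y_K,X_K)$ are glued from data over the local rings $\mathcal{O}_{S,s}$, together with the injectivity already noted in Remark~\ref{univ}(2). The ``only if'' direction is immediate: if $X$ is the $S$-Néron lft-model, then each localization $\Spec(\mathcal{O}_{S,s})\to S$ is flat (in fact a filtered inverse limit of open immersions, hence of affine smooth $S$-schemes of finite type), so Proposition~\ref{nm-gen}(2) applies and gives that $X\times_S\Spec(\mathcal{O}_{S,s})$ is the Néron lft-model over $\Spec(\mathcal{O}_{S,s})$. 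In the finite type case, $X\times_S\Spec(\mathcal{O}_{S,s})$ is still of finite type over $\Spec(\mathcal{O}_{S,s})$, so it is a Néron model.

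For the ``if'' direction, assume $X\times_S\Spec(\mathcal{O}_{S,s})$ is the $\Spec(\mathcal{O}_{S,s})$-Néron lft-model of $X_K$ for every closed point $s\in S$. First note $X$ is smooth over $S$: smoothness can be checked at each point of $X$, such a point lies over some $s\in S$ (or over the generic point, where $X_K$ is smooth by hypothesis), and smoothness of $X\times_S\Spec(\mathcal{O}_{S,s})$ over $\Spec(\mathcal{O}_{S,s})$ gives it. Now let $Y$ be a smooth $S$-scheme and $f_K\colon Y_K\to X_K$ a $K$-morphism. For each closed point $s$, the base change $Y\times_S\Spec(\mathcal{O}_{S,s})$ is smooth over $\Spec(\mathcal{O}_{S,s})$ with the same generic fiber $Y_K$, so the Néron mapping property over $\Spec(\mathcal{O}_{S,s})$ produces a unique morphism $f^{(s)}\colon Y\times_S\Spec(\mathcal{O}_{S,s})\to X\times_S\Spec(\mathcal{O}_{S,s})$ extending $f_K$. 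The task is to glue the $f^{(s)}$ into a single $S$-morphism $f\colon Y\to X$ restricting to $f_K$ on generic fibers, and to check it is unique.

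The uniqueness of the glued $f$ is exactly the injectivity of \eqref{eq.def-neron}, valid since $X$ is separated, so I need only produce $f$. Working Zariski-locally on $S$, I may assume $S=\Spec(R)$ affine; then $\Spec(\mathcal{O}_{S,s})$ is the filtered inverse limit of the affine opens $\Spec(R[1/g])$ containing $s$, and each $f^{(s)}$ spreads out, by the standard limit arguments of \cite{EGA}~IV~\S 8, to a morphism $Y\times_S\Spec(R[1/g_s])\to X\times_S\Spec(R[1/g_s])$ over some basic open neighborhood $\Spec(R[1/g_s])\ni s$, still extending $f_K$ on generic fibers. As $s$ ranges over the closed points these opens, together with the generic point handled by $f_K$ itself, cover $S$; since $S$ is noetherian of dimension $1$, finitely many of the $\Spec(R[1/g_s])$ cover $S$. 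On overlaps of two such opens the two spread-out morphisms agree on the (dense) generic fiber, hence agree on the overlap by the injectivity of \eqref{eq.def-neron} applied over that overlap (which is again an open subscheme of the Dedekind scheme $S$, with $Y$ restricted to it smooth and separated target). Thus the morphisms glue to $f\colon Y\to X$ over $S$ extending $f_K$, proving surjectivity of \eqref{eq.def-neron}. The finite type statement then follows because being of finite type is Zariski-local on the target $S$. The only point requiring a little care — the ``main obstacle'' — is the spreading-out and compatibility-on-overlaps bookkeeping, but this is entirely routine given the separatedness of $X$ and the noetherian one-dimensionality of $S$.
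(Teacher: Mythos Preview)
Your proof follows the same strategy as the paper's: invoke Proposition~\ref{nm-gen}(2) for the forward direction, and for the converse spread out each local extension $f^{(s)}$ to an open neighborhood of $s$ via the limit results of \cite{EGA}~IV~\S 8, then glue using separatedness of $X$.

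Two points you pass over that the paper handles explicitly. First, separatedness of $X/S$ is \emph{not} part of the hypotheses of the corollary (only ``locally of finite type'' is assumed), so your line ``valid since $X$ is separated'' is unjustified as written; the paper deduces separatedness from the local hypotheses via \cite{EGA}~IV.8.10.5(v), and without this step your appeal to the injectivity of \eqref{eq.def-neron} on overlaps is circular. Second, the spreading-out of $f^{(s)}$ from $\Spec(\cO_{S,s})$ to an open neighborhood requires the source $Y\times_S\Spec(\cO_{S,s})$ to be of finite presentation, which fails if $Y$ is not quasi-compact; the paper first reduces to $Y$ quasi-compact (legitimate once separatedness of $X$ is established, since one can then work Zariski-locally on $Y$). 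Both are routine fixes, but as stated your argument has these small holes.
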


\begin{proof} If $X/S$ is of finite type, then this corollary is proved in \cite{BLR}, 1.2/4. The same proof applies in the locally finite type case. For the convenience of the readers, we reproduce the proof in this case here. The direct implication follows from
  Proposition~\ref{nm-gen}~(2). Conversely, if $X\times_S
  \mathrm{Spec}(\cO_{S,s})$ is the N\'eron lft-model of $X_K$ over
  $\mathrm{Spec}(\cO_{S,s})$ for any closed point $s\in S$, it follows that $X/S$ is smooth and separated (\cite{EGA}, IV.8.10.5(v)). It remains to check the N\'eron mapping property for $X/S$ (Definition~\ref{def-nm}). Consider $Y$ a smooth $S$-scheme, and $f_K\colon Y_K\rightarrow X_K$ a morphism of
  $K$-schemes, we need to extend $f_K$ to a morphism of $S$-schemes from $Y$ to $X$. 
We may assume that $Y$ is quasi-compact, hence is of finite
presentation over $S$. Now, for any closed point $s\in S$, our
assumptions imply that one can find an extension $f_{\cO_{S,s}}$ of
$f_K$ over $\Spec(\cO_{S,s})$. As $Y/S$ is of finite presentation and
$X/S$ is locally of finite presentation, we may descend
$f_{\cO_{S,s}}$ to a morphism $f_s\colon Y\times_S V\rightarrow
X\times_{S}V$ over some open neighborhood $V\subseteq S$ of $s$
(\cite{EGA}, \uppercase\expandafter{\romannumeral4} 8.8.2). As $X/S$ is
separated, and $Y/S$ is flat, these local extensions of $f_K$ are
compatible with each other. As a result, they can be glued together to
a morphism $f\colon Y\rightarrow S$ extending $f_K$, as desired.  
\end{proof}

The next lemma says that we can restrict ourselves to geometrically connected varieties $X_K$ without loss of generality. 
Denote by $\overline{K}$ an algebraic closure of $K$. {For any closed point $s\in S$ we denote by $K_{s}^{\sh}$ the fraction field of the strict henselization of the local ring $\cO_{S,s}$.}

\begin{lemma} \label{connectedness} Let $X_K$ be a separated smooth connected
variety over $K$. Let $K'=K(X_K)\cap \overline{K}$ be the field of constants of $K(X_K)$,
let $S'$ be the integral closure of $S$ in $K'$ and let $T\subseteq S'$ be the
\'etale locus of $S'\to S$. Then 
\begin{enumerate}[{\rm (1)}] 
\item $X_K$ is canonically a separated smooth 
and geometrically connected variety over $K'$;
\item $X_K/K$ admits an $S$-N\'eron lft-model (resp. $S$-N\'eron model) if and only if $\dim T=0$, or $\dim T=1$ and $X_K/K'$ admits a $T$-N\'eron lft-model (resp. $T$-N\'eron model). 
\end{enumerate}
\end{lemma}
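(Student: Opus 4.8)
The plan is to treat the two parts separately, with part (1) being essentially formal and part (2) being the substantive claim. For (1), observe that $K' = K(X_K) \cap \overline{K}$ is a finite extension of $K$ (since $K(X_K)$ is finitely generated over $K$ and algebraic elements over $K$ inside it form a finite extension), and $X_K$ is naturally a $K'$-scheme via the inclusion $K' \hookrightarrow K(X_K) = \mathcal{O}_{X_K}(X_K)$ if $X_K$ is affine, or more generally via the canonical factorization $X_K \to \Spec K' \to \Spec K$ coming from the Stein-type factorization (the field of constants acts on global functions). Smoothness and separatedness over $K'$ follow from the corresponding properties over $K$ together with the fact that $K'/K$ is separable (being contained in $\overline{K}$, but one should note $K'$ need not be separable over $K$ in positive characteristic — here one uses that $X_K/K$ smooth forces $K(X_K)/K$ separable, hence $K'/K$ separable). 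Geometric connectedness over $K'$ is the standard statement that base-changing to the algebraic closure of the field of constants yields a connected scheme; I would cite \cite{EGA} IV.4.5 or the analogous statement in \cite{Liu}.

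For (2), the key point is that $S' \to S$ is a finite morphism of Dedekind schemes (integral closure in a finite extension), so it is étale over an open dense $T \subseteq S'$, and $S \setminus (\text{image of } T)$ consists of finitely many closed points — the points of ramification or where $S'$ fails to be regular (but $S'$ is always regular in dimension $1$, being normal, so the issue is purely ramification and residue-field inseparability). When $\dim T = 0$, the field $K'$ is strictly larger than the completions can see generically; more precisely I expect the argument to be: a N\'eron lft-model over $S$, if it existed, would base-change (Proposition~\ref{nm-gen}(2), using that $\cO_{S,s}^{\sh} \to$ its fraction field is a filtered limit of smooth algebras) to a N\'eron lft-model over each $\Spec \cO_{S,s}^{\sh}$, and over the strict henselization the constant field obstruction becomes visible — one produces a $K_s^{\sh}$-point of $X_K \times_K K_s^{\sh}$ coming from a connected component not defined over $\cO_{S,s}^{\sh}$, contradicting the existence of a connected smooth model. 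Conversely, when $\dim T = 1$, the morphism $T \to S$ is étale surjective onto an open $V \subseteq S$ with complement finitely many closed points, and one glues: over $V$ use faithfully flat descent (Proposition~\ref{nm-gen}(1)) from a $T$-N\'eron lft-model, and over the missing closed points $s$ of $S$ argue directly that the N\'eron mapping property forces no smooth model to exist unless it does — actually the cleaner route is to use Corollary~\ref{neron-l2g} to reduce to $S$ local, then note that over a local $S$ either $s$ splits in $S'$ appropriately (dimension-$1$ étale case, handled by descent) or it does not, in which case one shows $X_K$ has no smooth $\cO_{S,s}$-model at all because any section of a would-be model would give a rational point over the residue field forcing the constants to reduce, again a contradiction.

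The main obstacle I anticipate is the negative direction in part (2): showing that $X_K/K$ admits no $S$-N\'eron lft-model when $T$ does not surject onto $S$. The difficulty is that one must rule out the existence of \emph{any} smooth separated model with the mapping property, not merely show the obvious candidate fails; the leverage comes from testing the mapping property against $Y = S'$ (or its relevant local/henselian pieces), which is smooth over $S$ exactly on $T$ — so $T \to S$ itself, viewed through its generic fiber $\Spec K' \to \Spec K$ composed with a $K$-point structure on $X_K$, should fail to extend over the ramified points, and this failure is precisely what obstructs the N\'eron property. I would make this precise by choosing a suitable smooth test scheme detecting the field of constants and chasing the mapping property, which is the technical heart of the lemma; the positive direction is comparatively routine given Propositions~\ref{nm-gen} and Corollary~\ref{neron-l2g}.
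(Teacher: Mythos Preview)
You have misread the statement in the $\dim T = 0$ case: it asserts that $X_K/K$ \emph{does} admit an $S$-N\'eron model in this case, not that it fails to. The paper's argument here is short: $\dim T = 0$ forces $S'\to S$ to be ramified (or residually inseparable) over every closed point, which in turn forces $S$ to be semi-local (a finite separable extension is unramified almost everywhere). Then for each closed $s\in S$ one has $K'\not\hookrightarrow K_s^{\sh}$, so $X_K(K_s^{\sh})=\emptyset$, and $X_K$ itself (viewed as an $S$-scheme via $\Spec K\hookrightarrow S$) is its own N\'eron model: the mapping property is vacuous because any smooth $Y/S$ admitting a $K$-morphism $Y_K\to X_K$ must have empty closed fibres. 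There is no ``negative direction'' to prove.

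More seriously, in the $\dim T = 1$ case you miss the central idea and replace it with a descent/gluing scheme that does not straightforwardly work (note $T\to S$ need not be surjective, so Proposition~\ref{nm-gen}(1) does not apply as you suggest). The paper's argument is direct and avoids descent entirely: if $X$ is the $T$-N\'eron lft-model of $X_K/K'$, then the composite $X\to T\to S$ is \emph{already} the $S$-N\'eron lft-model of $X_K/K$. The point is that for any smooth $Y/S$ with a $K$-morphism $f_K\colon Y_K\to X_K$, the map $Y_K\to X_K\to\Spec K'$ together with normality of $Y$ gives a factorization $Y\to S'\to S$; then Corollary~\ref{sm-to-sm} (image of a smooth scheme in a regular scheme lands in the smooth locus) forces $Y\to S'$ to have image in $T$, making $Y$ a smooth $T$-scheme. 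Now the $T$-N\'eron property of $X$ extends $f_K$ to $Y\to X$. The converse (an $S$-N\'eron lft-model is canonically a $T$-N\'eron lft-model) runs by the same factorization. Your proposal does not identify Corollary~\ref{sm-to-sm} as the key input, and without it the argument does not close.
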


\begin{proof} (1) As $K'\subseteq \cO_{X_K}(X_K)\subseteq K(X_K)$ by the 
normality of $X_K$, the latter has a canonical structure of
$K'$-variety. As $K'$ is algebraically 
closed in $K(X_K)$, $X_K/K'$ is geometrically connected. As 
$X_K\times_{\Spec(K')}\Spec(\overline{K})$ is a connected component of  
$X_K\times_{\Spec(K)} \Spec(\overline{K})$, $X_K/K'$ is separated 
and smooth.  

(2) Note that because $X_K$ is smooth over $K$, $K'/K$ is separable, so $S'\to S$ is finite. If $\dim T=0$, then $S$ is semi-local and $S'\to S$ is ramified at all closed points. This implies that for all closed points $s\in S$, $X_K$ has no $K^{\sh}_s$-point, so 
$X_K$ is its own $S$-N\'eron model. Suppose now that $\dim T=1$ and that $X_K/K'$ has a $T$-N\'eron 
lft-model $X$. Then the composition with 
$T\to S$ makes $X$ into a smooth separated $S$-scheme, with generic fiber $X_K/K$. Let us check that it satisfies 
the N\'eron mapping property. Let $Y\to S$ be a smooth scheme 
and let $f_K\colon Y_K\to X_K$ be a $K$-morphism. Then $Y_K\to \Spec(K)$ also 
factors through $Y_K\to \Spec(K')$ via $X_K\to\Spec(K')$. In particular, $f_K$ is a $K'$-morphism. On the other hand, as $Y$ is normal, $Y\to S$ factors through $Y\to S'$. The latter has image in $T$ by Corollary~\ref{sm-to-sm} and makes $Y$ a smooth $T$-scheme. 
So $f_K$ extends to a $T$-morphism $f: Y\to X$, which is 
{\it a fortiori} an $S$-morphism. 

Conversely, if $X_K/K$ has an $S$-N\'eron lft-model $X$, the above {arguments} show that $X$ is canonically a smooth separated $T$-scheme, and the 
generic fiber of $X_T$ is nothing but $X_K$ {viewed as a scheme over $K'$}. If $\dim T=1$, the 
N\'eron mapping property of $X\to T$ is immediate to verify. 

Finally, as $T\to S$ is of finite type and separated, 
$X\to T$ is of finite type if and only if $X\to S$ is of finite type. 
\end{proof}

\begin{corollary}  Let $X_K$ be a smooth $K$-variety of
  dimension zero. Then $X_K$ admits a N\'eron model over $S$. 
\end{corollary}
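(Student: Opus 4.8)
The plan is to reduce to a single point and then quote Lemma~\ref{connectedness}. A smooth $K$-variety of dimension zero is reduced and of finite type over $K$, hence is a finite disjoint union $\coprod_{i}\Spec(L_{i})$ with each $L_{i}/K$ a finite separable field extension. By Remark~\ref{univ}(3) it suffices to produce an $S$-N\'eron model of each component (the empty scheme being trivially its own N\'eron model), so I may assume $X_{K}=\Spec(L)$ with $L/K$ finite separable.

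I would then feed $X_{K}=\Spec(L)$ into Lemma~\ref{connectedness}. Since $K(X_{K})=L$ is algebraic over $K$, fixing an embedding $L\hookrightarrow\overline{K}$ shows that the field of constants is $K'=K(X_{K})\cap\overline{K}=L$, so $X_{K}$ viewed over $K'$ is just $\Spec(K')$. Let $S'$ be the integral closure of $S$ in $L$, which is a Dedekind scheme \emph{finite} over $S$ because $L/K$ is separable, and let $T\subseteq S'$ be the \'etale locus of $S'\to S$. The key point is that $\Spec(K')$, regarded as a variety over $K'=K(T)$, tautologically admits $T$ itself (with its identity structure morphism) as a N\'eron model: for every smooth $T$-scheme $Y$ both sides of \eqref{eq.def-neron} consist of a single element, namely the respective structure morphism, so the N\'eron mapping property is vacuous, and $T\to T$ is of finite type. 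Hence $X_{K}/K'$ has a $T$-N\'eron model of finite type, and Lemma~\ref{connectedness}(2) then furnishes an $S$-N\'eron lft-model of $X_{K}/K$; concretely it is $T$ itself, viewed as an $S$-scheme via $T\hookrightarrow S'\to S$. Since $T\hookrightarrow S'$ is an open immersion and $S'\to S$ is finite, $T\to S$ is of finite type, so the model obtained is in fact a N\'eron \emph{model}, as required. (In fact $\dim T=1$ always holds here, since $S'\to S$ is generically \'etale and hence $T$ contains the generic point of $S'$.)

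I do not expect a genuine obstacle: the statement is an immediate corollary of Lemma~\ref{connectedness}. The only bookkeeping worth spelling out is the observation that $\Spec(K')$ over itself is its own N\'eron model, together with the transfer of the finite-type condition along the finite-type morphism $T\to S$ — which is exactly the content of the last sentence of the proof of Lemma~\ref{connectedness}. Alternatively, one can bypass the lemma and argue directly: take the model $T\to S$ above, and for a smooth $S$-scheme $Y$ and a $K$-morphism $f_{K}\colon Y_{K}\to\Spec(L)$, use that $Y$ is normal so that $Y\to S$ factors through $Y\to S'$, then that this factorization lands in $T$ and makes $Y$ a smooth $T$-scheme (the reasoning used inside the proof of Lemma~\ref{connectedness}), whence $f_{K}$ extends over $T$ and uniqueness follows from separatedness.
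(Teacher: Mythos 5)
Your proposal is correct and is essentially the paper's own argument: decompose into connected components (Remark~\ref{univ}(3)), use Lemma~\ref{connectedness} to view each component $\Spec(L)$ as $\Spec(K')$ over its field of constants, and observe that the base scheme is tautologically its own N\'eron model, with finiteness of $T\to S$ carrying the finite-type condition back down. The only slip is the parenthetical assertion that $\dim T=1$ always holds: this fails when $S$ is semi-local and $S'\to S$ is ramified at every closed point (e.g.\ $S=\Spec(\mathbb{Z}_p)$ and $L=\mathbb{Q}_p(\sqrt{p})$, where the \'etale locus is just the generic point), but the error is harmless because Lemma~\ref{connectedness}(2) grants existence outright in the $\dim T=0$ case, $X_K$ then being its own N\'eron model.
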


\begin{proof} By Proposition~\ref{univ} (3) and
  Lemma~\ref{connectedness}, we can suppose $X_K/K$ is geometrically
    connected, smooth of dimension $0$. Then $X_K=\Spec(K)$, and $S$ is clearly the N\'eron
model of $X_K$ over $S$. 
\end{proof}

The following proposition will be used to see when a N\'eron lft-model
is a N\'eron model ({\it e.g.} in Proposition~\ref{af-nm-ft}). 

\begin{proposition}\label{nm-ft} Let $X\to S$ be a 
separated morphism locally of finite type, such that 
$X\times_S \Spec(\mathcal{O}_{S,s})$ 
is of finite type for all $s$. 
\begin{enumerate}[{\rm (1)}] 
\item If $X$ is irreducible and $X_K$ is proper over $K$, then $X$ is of finite type over
  $S$.
\item If $X_K$ is affine, $X_s$ is irreducible for all $s\in S$ and $S$ is 
excellent, then $X$ is of finite type over $S$.
\item If $X$ is of finite type and if $X_K/K$ is geometrically
connected, then $X_s$ is geometrically connected for all $s$ in
some dense open subset of $S$. 
\end{enumerate}
\end{proposition}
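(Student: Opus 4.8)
The three parts follow a common pattern. Since $X\to S$ is locally of finite type and $S$ is noetherian, $X$ is of finite type if and only if it is quasi-compact, and there is always a quasi-compact open $W\subseteq X$ with $W_K=X_K$: cover $X$ by affine opens of finite type over $S$ and take the union of finitely many whose generic fibres cover the quasi-compact $X_K$. Also each $X_s$ is a closed subscheme of the finite type scheme $X\times_S\Spec(\mathcal O_{S,s})$, hence quasi-compact. So for (1) and (2) it is enough to prove $X_s=W_s$ for all but finitely many closed $s$, for then $X=W\cup\bigcup_sX_s$ is a finite union of quasi-compact pieces; and one may assume $X$ dominates $S$, else $X$ is a single fibre.

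For (1): $W\to S$ is of finite type with $W_K$ proper over $K$, so by spreading out of properness along $\Spec K=\varprojlim_{U\ni\eta}U$ (\cite{EGA}, IV 8.10.5) there is a dense open $U\subseteq S$ with $W_U\to U$ proper. Then $W_U\hookrightarrow X_U$ is an open immersion which, being proper over the separated $X_U\to U$, is also a closed immersion; as $X$ and hence $X_U$ is nonempty and irreducible, $W_U=X_U$. Since $S\setminus U$ is finite, $X\setminus W\subseteq\bigcup_{s\notin U}X_s$ meets only finitely many fibres.

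For (2): I would first reduce to $X$ integral and normal. Irreducibility of all the $X_s$ forces a unique component $X_0$ to dominate $S$, the rest being whole fibres over the closed points outside $\pi(X_0)$; once $X_0$ is shown to be of finite type, $\pi(X_0)$ is dense constructible, those fibres are finite in number, and $X$ is of finite type — so treat $X=X_0$ integral, noting that on it $W_s$ is dense in $X_s$ whenever $X_s\ne\emptyset$. Since $S$ is excellent, $X$ is excellent, so its normalization $\nu\colon X'\to X$ is finite; $X'$ still has finite type fibres over the local rings of $S$, $\nu^{-1}(W)$ is a quasi-compact open whose fibres contain every generic point of the corresponding fibre of $X'$, $X'_K$ is the normalization of the affine $K$-scheme $X_K$ hence affine, and if $X'$ is of finite type then so is $X$ as $\nu$ is finite surjective. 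So assume $X$ integral normal with $X_K$ affine and $W_s$ containing all generic points of $X_s$. Spreading out affineness (\cite{EGA}, IV 8.10.5), over a dense affine open $U\subseteq S$ the scheme $W_U$ is affine, $W_U=\Spec(B)$. Now $X_U\setminus W_U$ has codimension $\geq 2$ in $X_U$: each $X_s$ ($s\in U$) is of codimension $1$ in the integral $X_U$, and $X_s\setminus W_s$ misses every generic point of $X_s$. Hence $\Gamma(X_U,\mathcal O_{X_U})=\Gamma(W_U,\mathcal O_{W_U})=B$ by normality (algebraic Hartogs), giving a morphism $X_U\to\Spec(B)=W_U$ that is the identity on $W_U$; since $W_U$ is schematically dense in the integral separated $X_U$, this morphism inverts $W_U\hookrightarrow X_U$, so $X_U=W_U$, and we finish as in (1).

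Part (3) is routine spreading out: the locus of $s\in S$ with $X_s$ geometrically connected is constructible (\cite{EGA}, IV 9.7.7) and contains the generic point of $S$ by hypothesis, and a constructible subset of the irreducible noetherian scheme $S$ containing its generic point contains a dense open, which is the one sought. The step I expect to be the main obstacle is the codimension-and-global-sections argument of (2): one must be sure that deleting the vertical boundary $X_U\setminus W_U$ changes neither normality nor global functions — exactly where excellence (through finiteness of the normalization, hence the passage to the normal case) and the irreducibility of the fibres enter — and that the induced map $X_U\to\Spec\Gamma(X_U,\mathcal O_{X_U})$ is an isomorphism. The reductions preceding it are routine but also need excellence to keep things of finite type.
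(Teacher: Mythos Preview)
Your proof is correct; parts (1) and (3) match the paper (for (1) the paper passes through a Nagata compactification to obtain properness over a dense open, while you spread out properness directly via \cite{EGA}, IV.8.10.5, which amounts to the same thing). For (2) you take a genuinely different route. The paper keeps $X$ as is and, for any affine open $U'\not\subseteq W$, invokes the fact that the complement of a nonempty affine open in a noetherian scheme with finite normalization has pure codimension one (this is where excellence enters); this forces $U'\setminus(W\cap U')$ to contain an entire irreducible component of some fibre, contradicting the density of $W_s$ in the irreducible $X_s$. You instead reduce to the normal case and use the $S_2$ extension property to identify $\Gamma(X_U,\cO)$ with $\Gamma(W_U,\cO)$, obtaining a retraction $X_U\to W_U$ that inverts the open immersion by separatedness and density. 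Both arguments use excellence for the same reason (finiteness of normalization); the paper's is shorter and needs no preliminary reduction, while yours rests only on the familiar Hartogs-type extension and makes the role of the codimension-two locus explicit.

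One small slip in your reduction to the integral case: a non-dominant irreducible component can lie over a point $s\in\pi(X_0)$, with $(X_0)_s$ a proper closed subset of the irreducible fibre $X_s$; it need not lie outside $\pi(X_0)$ as you assert. But your conclusion (finitely many such components once $X_0$ is of finite type) still holds: take any affine open $A\subseteq X$ with $A_K\neq\emptyset$; it has finitely many irreducible components, only one dominant, and for $s$ in the dense open of $S$ where $A_s$ is nonempty and contained in $X_0$, density of the open $A_s$ in the irreducible $X_s$ forces $X_s=(X_0)_s\subseteq X_0$. Hence the extra components lie over finitely many closed points, and the reduction goes through.
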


\begin{proof} (1) Let $U$ be a quasi-compact open subset of $X$ such that $X_K\subseteq U$, and let ${U'}$ be a Nagata compactification of $U\to S$. Then $U$ and ${U'}$ are of finite type over $S$ sharing the same generic fiber. So there exists a dense open subset $V$ of $S$ such that $U\times_S V\cong {U'}\times_S V$ is proper over $V$. 
The inclusion $U\times_S V\to X\times_S V$ is then open and closed. As $X\times_S V$ is irreducible, $X\times_S V=U\times_S V$, {thus} is of finite type over $V$. The remaining part $X\times_S (S\setminus V)$ is a finite union of quasi-compact subsets, so $X$ is quasi-compact. 

(2) As in (1), to prove $X$ is quasi-compact, we are allowed to shrink
$S$. 
Let $U$ be a quasi-compact open subset of $X$ containing $X_K$, and
let $W$ be an affine finite type $S$-scheme such that $W_K=X_K$. 
As $U, W$ are of finite type over the noetherian scheme $S$ with the same generic fiber, shrinking $S$ if necessary, we can suppose that $U=W$ is affine, and that $U\to S$ 
is surjective (with $S$ affine). We claim that $X=U$. 
Let $U'$ be any affine open subset of $X$ not contained in $U$ and
$F:=U'\setminus (U\cap U')$. Then $F\neq \emptyset$, so it has pure codimension $1$ in $U'$ because $U\cap U'$ is affine (see \cite{Liu}, Exercise 4.1.15. The hypothesis $S$ excellent implies that the normalization map of $U'$ is finite). As $F_K=\emptyset$, $F$ is then a finite union of vertical divisors, which is impossible since $U_s$ is dense in $X_s$. Consequently, $X=U$ is of finite type over $S$. 

(3) This follows from \cite{EGA}, IV.9.7.7 after noticing that, as $S$
is irreducible, a dense locally constructible subset of $S$ 
contains an open dense
subset. 
\end{proof}

\begin{remark} In general, the condition 
$X\times_S \Spec(\cO_{S,s})$ of finite type over $\cO_{S,s}$ for all $s$ 
is not sufficient to conclude that $X$ is of finite type over $S$, as an example of Oesterl\'e (\cite{BLR}, 10.1/11) shows. See also Remark~\ref{not-ft}.
\end{remark}
\end{section}

\begin{section}{Image of smooth schemes} \label{image-sm}

Let $f\colon Y\to X$ be a morphism of $S$-schemes. In this section, we study geometric properties of $X$ at the points of $f(Y)$, when $Y$ is smooth over $S$. The main result is Proposition~\ref{fY-sm} which states that $X$ is smooth at points of $f(Y)$ under some mild hypothesis. Its Corollary~\ref{fY-reg} is a principal ingredient of the proof of Theorem~\ref{pj-nm}. 

Let $Y$ be a scheme. For any morphism locally of finite type $Z\to Y$, we denote by $\sm(Z/Y)\subseteq Z$ the smooth locus of $Z\to Y$. This is an open subset of $Z$ if $Y$ is locally noetherian. 

\begin{lemma} \label{sm-sect} 
Let $Z\to Y$ be a morphism locally of finite type. Suppose that $Y, Z$
are locally noetherian and {regular}. Then for any section 
$\sigma \colon Y\to Z$, the image $\sigma(Y)$ is contained in the
smooth locus $\sm(Z/Y)$ of $Z\to Y$. 
\end{lemma}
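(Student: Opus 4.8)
The plan is to reduce the assertion to a statement about local rings and then combine the structure theory of regular local rings with the "miracle flatness" criterion. Fix $y\in Y$, set $z=\sigma(y)\in Z$, and write $A=\cO_{Y,y}$, $B=\cO_{Z,z}$, so that the morphism $Z\to Y$ corresponds near $z$ to a local homomorphism $\alpha\colon A\to B$. Since $\sigma$ is a section, the induced map $\sigma^\#\colon B\to A$ is a local homomorphism with $\sigma^\#\circ\alpha=\mathrm{id}_A$; in particular $\sigma^\#$ is surjective, so $A=B/I$ with $I=\ker(\sigma^\#)$. As $Y$ and $Z$ are locally noetherian and regular, $A$ and $B$ are noetherian regular local rings, so by the structure theorem for regular quotients of regular local rings, $I$ is generated by part of a regular system of parameters of $B$: say $I=(t_1,\dots,t_c)$ with $t_1,\dots,t_c,\dots,t_n$ a regular system of parameters of $B$ and $c=\dim B-\dim A$.

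Next I would compute the fibre ring $\bar B:=B\otimes_A k(y)=B/\m_A B$. Using the retraction $\sigma^\#$ one checks that $I+\m_A B=\m_B$ (the quotient $B/(I+\m_A B)$ is $A/\m_A=k(y)$, a field), so the maximal ideal of $\bar B$ is generated by the images of $t_1,\dots,t_c$; hence $\dim\bar B\le c$. Combined with the general inequality $\dim B\le\dim A+\dim\bar B$, this forces $\dim\bar B=c=\dim B-\dim A$ and shows $\bar B$ is regular. Now apply miracle flatness (e.g. \cite{EGA}, $\mathrm{IV}_2$, 6.1.5): $A$ is regular, $B$ is Cohen--Macaulay, and $\dim B=\dim A+\dim\bar B$, so $\alpha\colon A\to B$ is flat. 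Thus $Z\to Y$ is flat at $z$.

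Finally, the fibre $Z_y$ is locally of finite type over $k(y)$, its local ring at $z$ is the regular ring $\bar B$, and $z$ is a $k(y)$-rational point of $Z_y$ because $\sigma$ provides a section of $Z_y\to\Spec k(y)$. A scheme locally of finite type over a field that is regular at a rational point is smooth there (Jacobian criterion), so $Z_y\to\Spec k(y)$ is smooth at $z$; together with flatness at $z$ this gives that $Z\to Y$ is smooth at $z$. Since $y$ was arbitrary, $\sigma(Y)\subseteq\sm(Z/Y)$.

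The step that genuinely uses both regularity hypotheses, and which I expect to be the crux, is establishing flatness: the regularity of $Z$ is what lets one describe $I$ precisely enough to pin down $\dim\bar B$, and the resulting dimension equality is exactly what miracle flatness requires (with the regularity of $Y$ entering as the "$A$ regular" hypothesis). The fibre-smoothness step is comparatively soft once one notices that $z$ is a rational point of $Z_y$, which is what bypasses any separability issue over a possibly imperfect residue field $k(y)$.
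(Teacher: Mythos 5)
Your proof is correct and complete: the decomposition of the kernel of $\cO_{Z,\sigma(y)}\to\cO_{Y,y}$ by part of a regular system of parameters, the dimension count giving regularity of the fibre ring, miracle flatness, and the observation that regularity at a rational point of the fibre yields smoothness are all valid steps. The paper gives no argument of its own for this lemma — it simply cites \cite{BLR}, 3.1/2 — and your argument is essentially the standard proof of that cited statement, so it can stand as a self-contained replacement for the reference.
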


\begin{proof} See {\cite{BLR}, 3.1/2}. \end{proof} 

\begin{corollary} \label{sm-to-sm}
Let $S$ be a locally noetherian regular scheme. Let $f\colon Y\to X$ be a morphism between two $S$-schemes locally of finite type. Suppose that $Y$ is smooth over $S$, and that $X$ is regular. Then $f(Y)$ is contained in the smooth locus of $X\to S$. 
\end{corollary}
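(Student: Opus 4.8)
The plan is to reduce the statement to Lemma~\ref{sm-sect} by passing to the graph of $f$, and then to descend smoothness from the base $Y$ to the base $S$.

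First I would reduce to a local situation. Fix $y\in Y$, set $x=f(y)$, and let $s\in S$ be the common image of $y$ and $x$; since the smooth locus of $X\to S$ commutes with localisation on the base, one may replace $S$ by $\Spec(\cO_{S,s})$, $X$ by $X\times_S\Spec(\cO_{S,s})$, and $Y$ by $\Spec(\cO_{Y,y})$. After this reduction $Y\to S$ is a flat local homomorphism of local rings, hence faithfully flat (flatness forces surjectivity by going-down), and it suffices to show $x\in\sm(X/S)$.

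Next I would consider the graph $\Gamma_f\colon Y\to Y\times_S X$, $y'\mapsto(y',f(y'))$, which is a section of the first projection $q_Y\colon Y\times_S X\to Y$. All the hypotheses of Lemma~\ref{sm-sect} are then met: $S$ is regular and locally noetherian and $Y\to S$ is smooth, so $Y$ is regular and locally noetherian; the second projection $Y\times_S X\to X$ is smooth (being a base change of $Y\to S$), and $X$ is regular and locally noetherian, so $Y\times_S X$ is regular and locally noetherian as well; finally $q_Y$ is locally of finite type. Hence Lemma~\ref{sm-sect} yields $\Gamma_f(Y)\subseteq\sm(Y\times_S X/Y)$.

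It remains to transfer this smoothness over $Y$ into smoothness over $S$. Let $q_X\colon Y\times_S X\to X$ be the second projection, the base change of the faithfully flat morphism $Y\to S$ along $X\to S$; in particular $q_X$ is faithfully flat. By faithfully flat descent of smoothness one has $\sm(Y\times_S X/Y)=q_X^{-1}\big(\sm(X/S)\big)$, the inclusion $\supseteq$ being stability of smoothness under base change, and $\subseteq$ following from descent of flatness together with the fibrewise criterion for smoothness (\cite{EGA}, IV.17.7.1). Applying $q_X$ to $\Gamma_f(y)\in\sm(Y\times_S X/Y)$ then gives $x=f(y)=q_X(\Gamma_f(y))\in\sm(X/S)$, as desired.

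The only substantial input is Lemma~\ref{sm-sect} itself. The two points where some care is needed are the assertion that the fibre product $Y\times_S X$ is again regular — which uses that a scheme smooth over a regular locally noetherian scheme is regular — and the final descent step; the preliminary localisation, which makes $Y\to S$ faithfully flat, is exactly what is required to run that last argument.
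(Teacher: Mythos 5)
Your proposal is correct and is essentially the paper's own argument: the paper likewise forms $Z=X\times_S Y$, notes it is regular because it is smooth over the regular scheme $X$, applies Lemma~\ref{sm-sect} to the graph section of the projection to $Y$, and identifies $\sm(Z/Y)$ with the pullback of $\sm(X/S)$. The only differences are cosmetic (order of the factors, your preliminary localisation, and your more detailed justification of the flat-descent step that the paper states as the equality $\sm(Z/Y)=\sm(X/S)\times_S Y$).
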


\begin{proof}
Consider the $Y$-scheme $Z:=X\times_S Y$. Notice that $Z$ is regular, being smooth over $X$. The morphism
$f$ induces a section $\sigma\colon Y\to Z$, $y\mapsto (f(y),y)$,
of the second projection $Z\to Y$. By Lemma~\ref{sm-sect}, 
$$\sigma(Y) \subseteq \sm(Z/Y)=\sm(X/S)\times_S Y,$$ 
hence $f(Y)\subseteq \sm(X/S)$. 
\end{proof}

Corollary~\ref{sm-to-sm} does not hold in general if we remove the 
regularity hypothesis on $X$. However, in the situation of relative
curves, we can weaken the regularity hypothesis to the normality of
$X$ (Proposition~\ref{fY-sm}). We first prove some preliminary results. 

\begin{lemma} \label{qf-irr} Let $S$ be an irreducible locally
  noetherian scheme, and let $X, Y$ be irreducible flat $S$-schemes 
locally of finite type. Let $f\colon Y\to X$ be a dominant $S$-morphism. 
Let $s\in S$. Suppose that $f$ is quasi-finite at some point $y_0\in Y_s$, and that $Y_s$ is irreducible at $y_0$. Then 
$X_s$ is irreducible at $x_0:=f(y_0)$.  
\end{lemma}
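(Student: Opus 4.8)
The statement is local on $X$, $Y$, and $S$, so I would first reduce to the affine setting: replace $S$ by $\Spec(\mathcal O_{S,s})$, then replace $Y$ by an affine open neighborhood $\Spec(B)$ of $y_0$ on which $f$ is quasi-finite, and $X$ by an affine open $\Spec(A)$ containing $f(y_0)$ with $A\to B$ the corresponding ring map. Shrinking $Y$ further, I may assume $Y_s$ is irreducible (not merely irreducible at $y_0$); the goal becomes to show that the local ring $\mathcal O_{X_s,x_0}$ has a unique minimal prime, equivalently that $X_s$ is irreducible in a neighborhood of $x_0$.

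The key geometric input is the semicontinuity/openness of the quasi-finite locus together with Zariski's Main Theorem: since $f$ is quasi-finite at $y_0$ and both schemes are of finite type over the noetherian base, I can factor $Y\to X$ (after shrinking around $y_0$) as an open immersion $Y\hookrightarrow \overline Y$ followed by a finite morphism $\overline Y\to X$, with $y_0$ still mapping into a neighborhood where things are controlled. Then I would pass to fibers over $s$: the map $Y_s\to X_s$ is quasi-finite at $y_0$, and $\overline{Y}_s\to X_s$ is finite and surjective onto a closed subscheme of $X_s$ through which $f_s$ factors dominantly onto a neighborhood of $x_0$ — here flatness of $X/S$ and $Y/S$ is what guarantees that $f_s$ remains dominant onto the component of $X_s$ through $x_0$, because each irreducible component of $X_s$ (resp. $Y_s$) has codimension one and $f$ being dominant with $Y,X$ flat of the same relative dimension forces the image of the generic point of $Y_s$ to be a generic point of $X_s$.

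The heart of the argument is then: $X_s$ has a component $C$ through $x_0$ which is dominated by the irreducible set $Y_s$ under a quasi-finite map; I must rule out the existence of a second component $C'$ of $X_s$ through $x_0$. If such a $C'$ existed, then since $\overline Y_s\to X_s$ is finite and surjective onto its image (which by flatness/dominance argument is all of $X_s$ near $x_0$), some point $y'$ of $\overline Y_s$ over $x_0$ would lie on a component of $\overline Y_s$ dominating $C'$; but $Y_s=\overline Y_s\cap Y$ is irreducible, and one checks that the points of $\overline Y_s\setminus Y_s$ near the fibre over $x_0$ cannot account for a whole extra component, contradicting quasi-finiteness at $y_0$ by a dimension count (a quasi-finite map cannot have its source irreducible of dimension $d$ while the target near the image point has two components each of dimension $d$, since the fibre over the generic point of the second component would have to be empty yet the map is finite hence surjective onto a closed set containing it). I expect the main obstacle to be this last bookkeeping — precisely controlling which components of $\overline Y_s$ and of $X_s$ pass through the relevant points and invoking flatness to equate dimensions and dominance — rather than any deep new idea; once the ZMT factorization is in place the rest is a careful but routine argument with generic points and $\dim$ of fibres.
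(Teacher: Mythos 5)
Your reductions (localizing, shrinking so that $f$ is quasi-finite, factoring $f$ by Zariski's Main Theorem as an open immersion $Y\hookrightarrow\overline{Y}$ followed by a finite surjective morphism $\overline{Y}\to X$, and using flatness to get equidimensionality of $X_s$ and $Y_s$ of dimension $\dim X_\eta=\dim Y_\eta$) are exactly those of the paper, but the decisive step --- ruling out a second component $C'$ of $X_s$ through $x_0$ --- has a genuine gap. The point $y'\in\overline{Y}_s$ over $x_0$ that you produce on a component of $\overline{Y}_s$ dominating $C'$ need not be $y_0$ and need not lie in $Y$ at all: entire irreducible components of $\overline{Y}_s$ may be contained in the boundary $\overline{Y}\setminus Y$, hence are invisible to $Y_s$ and to quasi-finiteness at $y_0$. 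Your parenthetical dimension count conflates the two maps: the emptiness of the fibre of $Y_s\to X_s$ over the generic point of $C'$ contradicts nothing, since $Y_s\to X_s$ is only quasi-finite (neither finite nor surjective), while the map that is finite and surjective is $\overline{Y}_s\to X_s$, whose points over that generic point may all lie in $\overline{Y}\setminus Y$. Thus the assertion that the boundary ``cannot account for a whole extra component'' is precisely what must be proved, and your sketch offers no mechanism for it; also, your remark that the components of $X_s$, $Y_s$ have codimension one presumes a one-dimensional base, whereas the lemma allows any irreducible locally noetherian $S$ (what is needed is the equidimensionality statement of \cite{EGA}, IV.14.2.3, which the paper cites).

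The paper's mechanism for this step is the going-down (specialization-lifting) property of the finite morphism $\overline{Y}\to X$ (\cite{Mat}, 5.E.(v)): for each component $Z_i$ of $X_s$ through $x_0$, the specialization from the generic point of $Z_i$ to $x_0$ is lifted to a point of $\overline{Y}$ specializing to $y_0$, yielding an irreducible closed subset $F_i\subseteq\overline{Y}$ \emph{through $y_0$} mapping finitely and surjectively onto $Z_i$. Only then does the dimension count bite: $\dim F_i=\dim Z_i=\dim X_\eta=\dim Y_\eta=\dim_{y_0}Y_s$, and since $F_i\cap Y$ is a nonempty closed subset of $Y_s$ (it contains $y_0$) of full dimension and $Y_s$ is irreducible, $F_i\cap Y=Y_s$; hence each $Z_i$ is the closure of $f(Y_s)$ and $Z_1=Z_2$. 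To repair your argument you must produce, as the paper does, an irreducible subset of $\overline{Y}$ through $y_0$ itself dominating the putative second component --- i.e.\ import a going-down input or an equivalent substitute; without it the proof does not go through.
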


\begin{proof} The property is local on $X$ and $Y$. 
In particular, shrinking {$X$} and $Y$ if necessary,
we can suppose that $f\colon Y\to X$ is quasi-finite {and separated}. Thus $f$ can be factorized as $Y\to \overline{Y} \to X$ with an open (dense) immersion followed by a finite surjective morphism {(\cite{EGA}, IV.8.12.6)}. Let $Z_1, Z_2$ be two 
irreducible components of $X_s$ passing through $x_0$.
By the {going-down} property of $\overline{Y}\to X$ (\cite{Mat}, 5.E.(v)), there exist irreducible closed subschemes $F_1, F_2$ of {$\overline{Y}$}
passing through $y_0$ such that the induced maps $F_i\to Z_i$ ($i=1,2$) are finite and dominant (thus surjective). 
Let $\eta$ be the generic point of $S$. As $X, Y$ are irreducible and
flat over $S$, both $X_s$ and $Y_s$ are equidimensional of dimension 
$\dim X_\eta=\dim Y_\eta$ (\cite{EGA}, IV.14.2.3). Consequently,
$$\dim F_i=\dim Z_i=\dim_{x_0} X_s=\dim X_\eta=\dim Y_\eta=\dim_{y_0} Y_s$$ 
and {$F_i\cap Y_s=Y_s$}. Therefore $Z_1=Z_2$ and $X_s$ is irreducible
at $x_0$. 
\end{proof} 

\begin{lemma} \label{bertini} Let $S$ be a locally noetherian scheme,
  let $X, Y$ be two $S$-schemes locally of finite type, and let 
  $f\colon Y\to X$ be a morphism of $S$-schemes. Consider $s\in S$, and $y_0\in Y_s$ a closed point of $Y_s$.  Let 
$x_0=f(y_0)$. Suppose that $Y_s$ is regular at $y_0$ and 
$$\dim_{y_0} Y_s>1 \quad \text{\rm and }  \ \codim_{y_0} (f^{-1}(x_0), Y_s)>0.$$ 
(The second inequality means that $f$ is non-constant 
on the irreducible component of $Y_s$ containing $y_0$). 
Then there exists a subscheme $Z$ of $Y$ passing 
through $y_0$ such that 
$$\dim_{y_0} Z_s < \dim_{y_0} Y_s \quad 
\text{\rm and }  \ \codim_{y_0} (f^{-1}(x_0)\cap Z, Z_s)>0, $$ 
and $Z_s$ is regular at $y_0$. If furthermore $Y$ is regular (resp. if $Y\to S$ 
is flat) at $y_0$, we can assume that the same property holds for $Z$. 
\end{lemma}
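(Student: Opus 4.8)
The plan is to cut down $Y$ near $y_0$ by a hyperplane section, in the spirit of Bertini-type arguments. Since $Y_s$ is regular at $y_0$ of dimension $d:=\dim_{y_0} Y_s>1$, after localizing I may assume $Y$ is affine and choose a regular system of parameters for the local ring $\cO_{Y_s, y_0}$; I want to find an element $t\in \cO_{Y}(Y)$ whose restriction to $Y_s$ cuts out, locally at $y_0$, a regular subscheme $Z_s$ of dimension $d-1$, while keeping $f$ non-constant on (the component through $y_0$ of) $Z_s$. The existence of such a $t$ making $Z_s$ regular at $y_0$ is the classical fact that a generic member of a very ample (or merely base-point-free, after replacing $Y$ by a suitable affine neighborhood) linear system avoids a fixed singular point's obstructions; concretely, $t$ must lie in $\m_{y_0}\setminus \m_{y_0}^2$ modulo the ideal of $Y_s$, and such $t$ exists because $Y_s$ is regular at $y_0$. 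This handles $\dim_{y_0} Z_s = d-1 < d$ and the regularity of $Z_s$ at $y_0$.

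Next I need $\codim_{y_0}(f^{-1}(x_0)\cap Z, Z_s)>0$, i.e. that $f|_{Z}$ is non-constant on the component of $Z_s$ through $y_0$. Since $\codim_{y_0}(f^{-1}(x_0), Y_s)>0$, the fiber $f^{-1}(x_0)\cap Y_s$ has dimension at most $d-1$ locally at $y_0$. The danger is that $Z_s$ lands entirely inside $f^{-1}(x_0)$. To avoid this I should choose the hyperplane $t$ not merely generically for regularity but also transversally to $f^{-1}(x_0)\cap Y_s$ at $y_0$: it suffices that $V(t)\cap Y_s$ does not contain the component of $f^{-1}(x_0)\cap Y_s$ through $y_0$ (if there is one of positive dimension), which is again a generic condition on $t$, excluding finitely many linear conditions. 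Combining the two genericity requirements (regularity of $Z_s$ at $y_0$, and $Z_s\not\subseteq f^{-1}(x_0)$ locally), a suitable $t$ exists because each is an open dense condition on the choice of hyperplane through $y_0$ in a sufficiently ample embedding of an affine neighborhood of $y_0$; here I use that the residue field may be finite, so I may first make a finite flat base change or pass to a large enough linear system (replacing $\cO_Y(Y)$-generators by a Veronese) to guarantee enough hyperplanes — this is the standard workaround for Bertini over finite fields.

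For the last sentence: if $Y$ itself is regular at $y_0$, I take the \emph{same} $t\in \cO_Y(Y)$ but now view $Z=V(t)\subseteq Y$ as a subscheme of $Y$ (not just of $Y_s$); since $t$ restricted to $Y_s$ is a nonzerodivisor in $\cO_{Y_s,y_0}$ and $Y$ is regular hence Cohen–Macaulay at $y_0$, the element $t$ together with a uniformizer $\pi$ of $\cO_{S,s}$ forms part of a system of parameters, so $Z$ is regular at $y_0$ with $Z_s = Z\cap Y_s$ as expected; the dimension and codimension statements are inherited. If instead $Y\to S$ is flat at $y_0$, then $Z=V(t)\to S$ is again flat at $y_0$ because $t$ is a nonzerodivisor on the fiber $Y_s$, hence (by the local criterion of flatness, $\cO_{Y,y_0}$ being $\cO_{S,s}$-flat and $t$ regular on $Y_s$) the quotient by $t$ stays $\cO_{S,s}$-flat. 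The main obstacle I expect is the genericity argument producing $t$ simultaneously satisfying both conditions when the residue field is small — handled by enlarging the linear system — together with bookkeeping to ensure $y_0$ stays in the chosen open affine at every localization step.
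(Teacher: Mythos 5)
Your construction is the same as the paper's: locally at $y_0$ you cut $Y$ by a hypersurface $V(u)$ whose restriction $\bar u$ to $Y_s$ lies in $\m_{y_0}\cO_{Y_s,y_0}\setminus\m_{y_0}^2\cO_{Y_s,y_0}$ and does not vanish on any codimension-one component of $f^{-1}(x_0)$ through $y_0$; and your treatment of the last sentence (regularity of $Z$ because $u\notin\m_{y_0}^2\cO_{Y,y_0}$, flatness because $\bar u$ is a nonzerodivisor on the fibre plus the local criterion) is exactly what the paper does. The one place where your argument is genuinely weaker is the existence of the cutting element. The paper produces $\bar u$ by the prime avoidance lemma applied inside the local ring $\cO_{Y_s,y_0}$: since $Y_s$ is regular at $y_0$, each codimension-one component $\Gamma_i$ of $f^{-1}(x_0)$ through $y_0$ is cut out by a principal prime ideal $\bar t_i\cO_{Y_s,y_0}$, and $\m_{y_0}\cO_{Y_s,y_0}$ is contained in neither $\m_{y_0}^2\cO_{Y_s,y_0}$ (positive dimension) nor any $\bar t_i\cO_{Y_s,y_0}$ (dimension $>1$), so prime avoidance gives $\bar u$ outside the union. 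This is purely local and completely insensitive to the size of the residue field.

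By contrast, your justification -- that the two requirements are ``open dense conditions on the choice of hyperplane'' -- does not stand as written when $k(y_0)$ is finite: the bad loci are finitely many proper linear subspaces of a finite-dimensional linear system, and over a finite field such a union can exhaust the whole space, which is precisely why you reach for a workaround. Of the two workarounds you propose, the finite flat base change is not admissible here: it replaces $Y$ by $Y\times_S S'$ and $y_0$ by a point above it, and the lemma demands a subscheme of $Y$ itself; no descent of $Z$ is provided and none is obvious. The enlarged-linear-system (Veronese) route can be pushed through with a counting estimate, but you do not supply it. Since all you need is one element of the local ring avoiding one non-prime ideal ($\m_{y_0}^2$) and finitely many primes ($\bar t_i\cO_{Y_s,y_0}$), prime avoidance replaces the entire Bertini discussion in one line and removes the residue-field issue altogether.
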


\begin{proof} We construct $Z$ locally at $y_0$ as a hypersurface 
defined by some $u\in \m_{y_0}\cO_{Y, y_0}$ which must avoid some ideals of $\cO_{Y,y_0}$. We notice the following facts: 
\begin{enumerate}[{\rm (i)}]
\item Let $\Gamma_1, \dots, \Gamma_n$ be the irreducible components of $f^{-1}(x_0)$ of codimension $1$ in $Y_s$ passing through $y_0$. 
Locally at $y_0$, each $\Gamma_i$ is defined by a prime principal ideal 
$\bar{t}_i\cO_{Y_s, y_0}\subseteq \m_{y_0}\cO_{Y_s,y_0}$ because $Y_s$ is regular at 
$y_0$. As $\dim_{y_0} Y_s>1$, we have 
$\m_{y_0}\cO_{Y_s,y_0}\not\subseteq \bar{t}_i\cO_{Y_s,y_0}$; 
\item We have 
$\m_{y_0} \cO_{Y_s, y_0} \not\subseteq \m_{y_0}^2\cO_{Y_s,y_0}$ 
because $Y_s$ has positive dimension at $y_0$. 
\end{enumerate}
By prime avoidance lemma (\cite{Mat}, 1.B), there exists 
$$\bar{u}\in \m_{y_0} \cO_{Y_s, y_0} \setminus 
\left(\m_{y_0}^2\cO_{Y_s,y_0} \cup (\cup_{i\le n} \bar{t}_i\cO_{Y_s, y_0})\right).$$
Lift $\bar{u}$ to some $u\in \m_{y_0} \cO_{Y, y_0}$ and 
let $Z:=V(u)$ be the subscheme of $Y$ defined in some
open neighborhood of $y_0$. Then:  
\begin{enumerate}[{\rm (1)}] 
\item $Z_s$ is regular at $y_0$; if $Y$ is regular (resp. if $Y\to S$
  is flat) at $y_0$, then the same holds for $Z$ because 
$u\notin \m_{y_0}^2\cO_{Y,y_0}$ and $\bar{u}$ is not a zero divisor); 
\item $\dim_{y_0} Z_s < \dim_{y_0} Y_s$; 
\item and $\codim_{y_0}(f^{-1}(x_0)\cap Z, Z_s)>0$, because 
otherwise $V(\bar{u})$ would be contained in, hence equal to, 
some irreducible component of $f^{-1}(x_0)$, so
$\bar{u}\in \bar{t}_i\cO_{Y_s, y_0}$ for some $i\le
n$. Contradiction. 
\end{enumerate}
Therefore $Z$ satisfies the desired properties. 
\end{proof}

\begin{lemma} \label{sp-int} Let $S$ be a Dedekind scheme. 
Let $X$ be a normal relative curve over $S$,\footnote{By a \emph{relative
curve} over $S$, we mean a flat, locally finite type $S$-scheme with
generic fiber of dimension $1$.} {and let $Y$ be a regular scheme which is flat
and locally of {finite type} over $S$.} Suppose $Y_s$ is
regular. Let $f\colon Y\rightarrow X$ be 
an $S$-morphism, and $x_0=f(y_0)$ for some $y_0\in Y_s$. Suppose that 
$$\codim_{y_0}(f^{-1}(x_0), Y_s)>0.$$
Then $X_s$ is irreducible and reduced at $x_0$. If $Y_s$ is 
geometrically reduced in a neighborhood of $y_0$, then $X_s$
is geometrically reduced in a neighborhood of $x_0$. 
\end{lemma}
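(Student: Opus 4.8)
\textbf{Proof plan for Lemma~\ref{sp-int}.}

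The plan is to reduce to a situation where $Y$ is a regular relative curve (so that $Y_s$ is one-dimensional at $y_0$) and then analyse the finite part of the morphism $Y\to X$ directly. First I would dispose of the trivial case $\dim_{y_0} Y_s = 0$: then $f^{-1}(x_0)$ has codimension $0$ in $Y_s$ near $y_0$, contradicting the hypothesis, unless $Y_s$ is empty near $y_0$, in which case there is nothing to prove. If $\dim_{y_0} Y_s = 1$, the codimension hypothesis says exactly that $f$ is quasi-finite at $y_0$. Since $Y$ is flat over $S$ and $Y_s$ has dimension $1$ at $y_0$, $Y$ is a regular relative curve over $S$ near $y_0$; shrinking, we may assume $f\colon Y\to X$ is dominant onto an irreducible component of $X$ passing through $x_0$ (replace $X$ by the schematic image of the component of $Y$ through $y_0$, which is still a normal relative curve after further shrinking, or argue on each component). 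Then Lemma~\ref{qf-irr} applied to this component yields that $X_s$ is irreducible at $x_0$. For the reducedness: after shrinking, factor $f = (Y\hookrightarrow \overline Y) \circ (\overline Y \to X)$ with an open immersion followed by a finite surjective morphism onto (a component of) $X$ (using \cite{EGA}, IV.8.12.6 as in the proof of Lemma~\ref{qf-irr}). The key point is that $\overline Y$, being finite over the normal scheme $X$, is closely tied to the normalisation of $X$; since $Y$ is regular, $\overline Y$ is regular at $y_0$, hence so is the component of $X$ through $x_0$ at $x_0$ (a local ring dominated by a regular one of the same dimension, with finite generic extension), at least after passing to the normalisation — but $X$ is already normal, so $X$ is regular at $x_0$, and in particular $X_s$ is reduced at $x_0$ by the regularity of $X$ together with flatness over the Dedekind base (the uniformizer of $S$ is a nonzerodivisor in the regular local ring $\cO_{X,x_0}$, and $X_s$ is then a hypersurface in a regular scheme).

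For the reduction from $\dim_{y_0} Y_s > 1$ to $\dim_{y_0} Y_s = 1$, I would invoke Lemma~\ref{bertini} repeatedly. Each application replaces $Y$ by a closed subscheme $Z$ through $y_0$ with $\dim_{y_0} Z_s$ strictly smaller, with $Z_s$ still regular at $y_0$, with $Z$ still regular and flat over $S$ at $y_0$ (this is the part of Lemma~\ref{bertini} that requires $Y$ regular, which we have here), and with the codimension-positivity of $f^{-1}(x_0)\cap Z$ in $Z_s$ preserved. Iterating until $\dim_{y_0} Z_s = 1$, we are in the case treated above, and we conclude that $X_s$ is irreducible and reduced at $x_0 = f(y_0)$, which depends only on $X$ and $x_0$, not on $Z$; hence the conclusion holds. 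The main obstacle I anticipate is the bookkeeping needed to ensure that at each stage $Z$ genuinely satisfies the hypotheses of the lemma being proved inductively — in particular that restricting $f$ to $Z$ does not accidentally make it constant on a component (this is precisely the content of the codimension condition in Lemma~\ref{bertini}~(3)) and that $Z$ remains flat over $S$, which is where one uses that $\bar u$ is a nonzerodivisor on $Y_s$, given by the prime-avoidance choice.

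Finally, for the addendum on geometric reducedness: geometric reducedness of $Y_s$ near $y_0$ is preserved under the Bertini step (cutting by a hypersurface whose equation is not a zerodivisor and not in $\m_{y_0}^2$, over a field, keeps the fibre geometrically reduced near the point — one may check this after base change to $\bar K(s)$, where the same prime-avoidance argument applies). So we again reduce to $\dim_{y_0} Y_s = 1$. In that case, in the factorisation $Y \to \overline Y \to X$, the finite morphism $\overline Y \to X$ onto the component through $x_0$ induces a finite dominant morphism of fibres $\overline Y_s \to X_s$ near the relevant points; since $Y_s$ is geometrically reduced and one-dimensional at $y_0$ and $\overline Y$ agrees with $Y$ on a neighbourhood, and since $X_s$ is already known reduced and irreducible at $x_0$ with $\cO_{X,x_0}$ regular, one gets that $\cO_{X_s, x_0}$ is a reduced one-dimensional local ring receiving a finite injection into a geometrically reduced ring, which forces geometric reducedness (its total ring of fractions is a field, separably generated over $K(s)$ because it embeds in that of $Y_s$). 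Here the only delicate point is making sure ``geometrically reduced'' is tested correctly after the inseparable base change, which is handled by the standard criterion that a one-dimensional reduced local algebra over a field is geometrically reduced iff its residue extension at each generic point is separable.
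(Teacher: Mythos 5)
Your overall strategy coincides with the paper's: reduce to $\dim_{y_0}Y_s=1$ by iterating Lemma~\ref{bertini}, then get irreducibility of $X_s$ at $x_0$ from Lemma~\ref{qf-irr}; that part is correct. The gap is in the reducedness step. You assert that $\cO_{X,x_0}$, being normal and admitting a finite extension by a regular local ring of the same dimension, must be regular. This principle is false: the local ring of $\Spec\bigl(R[x,y]/(xy-\pi^2)\bigr)$ at the origin is normal and receives the finite extension $R[u,v]/(uv-\pi)$ via $x=u^2$, $y=v^2$, which is regular of the same dimension, yet the base ring is not regular. (The lemma deliberately does not claim regularity of $X$ at $x_0$; that stronger conclusion is only reached in Proposition~\ref{fY-sm}, under the extra hypothesis that $Y$ is \emph{smooth} over $S$, and it needs Raynaud's theorem.) Moreover, even granting that $\cO_{X,x_0}$ is regular, "$X_s$ is a hypersurface in a regular scheme, hence reduced'' does not follow: $\Spec\bigl(R[x,y]/(y^2-\pi)\bigr)$ is a regular flat relative curve over $R$ whose special fiber $V(y^2)$ is nowhere reduced. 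Cutting a regular (or merely $S_2$) scheme by the uniformizer only yields that $X_s$ is $S_1$; you still need reducedness at the generic point $\xi$ of $X_s$, and nothing in your argument supplies it.

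The paper obtains the two ingredients separately: normality of $X$ gives $S_2$, hence $X_s=V(\pi)$ is $S_1$; and reducedness at $\xi$ comes from the dominance of $Y_s\to X_s$ together with the reducedness of $Y_s$ --- the ramification index of $\cO_{S,s}\to\cO_{Y,\eta}$ is $1$ because $Y_s$ is regular at its generic point $\eta$, and multiplicativity of ramification indices along $\cO_{S,s}\to\cO_{X,\xi}\to\cO_{Y,\eta}$ (with $\cO_{X,\xi}$ a DVR by normality in codimension one) forces $\pi$ to generate $\m_\xi\cO_{X,\xi}$. Note that you cannot shortcut this by embedding $\cO_{X_s,\xi}$ into the reduced ring $\cO_{Y_s,\eta}$: injectivity of that map is \emph{equivalent} to the reducedness you want, so that route is circular. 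Your final paragraph on geometric reducedness inherits both problems, since it presupposes that $\cO_{X,x_0}$ is regular and that $X_s$ is already known to be reduced; once the ramification-index argument is in place, the paper's conclusion (scheme-theoretic dominance of $Y_s\to X_s$ persists after base change to $\overline{k(s)}$) finishes the addendum directly.
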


\begin{proof} We can suppose $X, Y$ are integral. 
Using repeatedly Lemma \ref{bertini}, we find a subscheme 
$Z$ of $Y$ containing $y_0$, flat over $S$, such that $Z_s$ is 
irreducible and regular of dimension $1$ and 
$f|_{Z_s}$ is non-constant.   
This implies that $f|_{Z_s}$ is quasi-finite
and $f|_Z$ is dominant. By Lemma \ref{qf-irr}, 
$X_s$ is irreducible at $x_0$. Shrinking $X$ and $Y$ if necessary,
we can suppose $X_s$ is irreducible. As $f|_{Z_s}$ is non-constant, 
$Y_s\to X_s$, and hence $Y\to X$, are dominant. Thus $X_s$ is reduced at its generic point $\xi$ because the ramification index of $\cO_{S,s}\to \cO_{X,\xi}$
is $1$. 
But $X$ is normal, hence $S_2$, $X_s$ is $S_1$. This implies that
$X_s$ is reduced and $Y_s\to X_s$ is scheme-theoretically dominant. 
Then the same property holds over $\overline{k(s)}$, which implies that 
$X_s$ is geometrically reduced if $Y_s$ is geometrically reduced. 
\end{proof}

\begin{proposition} \label{fY-sm} Let $S$ be a Dedekind scheme, and
  let $X$ be a normal relative curve over $S$ with 
smooth generic fiber. Let $f \colon Y\to X$ be a morphism 
with $Y$ smooth and $Y_s$ irreducible for some {closed point} $s\in S$. 
Then either $f(Y_s)$ is one point, or $X$ is smooth at every point 
of $f(Y_s)$. 
\end{proposition}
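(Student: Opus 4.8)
The plan is to reduce to the case where $f(Y_s)$ is not a single point, and then show $X$ is smooth along $f(Y_s)$ by checking smoothness at each point $x_0 = f(y_0)$, $y_0 \in Y_s$. Since $Y$ is smooth over $S$, each fiber $Y_s$ is geometrically reduced (indeed geometrically regular); and since $Y_s$ is irreducible, it is geometrically irreducible after noting that $Y \to S$ being smooth makes the generic point of $Y_s$ well-behaved — more carefully, smoothness gives $Y_s$ geometrically reduced, and one can pass to the strict henselization or to $\overline{k(s)}$ to handle geometric connectedness where needed. First I would observe that if $f(Y_s)$ is not one point, then $f$ is non-constant on $Y_s$ (which is irreducible), so $\codim_{y_0}(f^{-1}(x_0), Y_s) > 0$ for every $y_0 \in Y_s$: the fiber $f^{-1}(x_0) \cap Y_s$ is a proper closed subset of the irreducible scheme $Y_s$.

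Next I would apply Lemma~\ref{sp-int} with this $Y$ (which is regular, flat, locally of finite type over $S$, with $Y_s$ regular) to conclude that $X_s$ is irreducible, reduced, and in fact geometrically reduced in a neighborhood of $x_0$. The point of invoking Lemma~\ref{sp-int} is that it already packages the Bertini-type slicing argument (repeatedly cutting $Y$ by hypersurfaces to land on a flat regular relative curve $Z \subseteq Y$ through $y_0$ with $f|_{Z_s}$ non-constant) together with the normality/$S_2$ argument showing $X_s$ is reduced. So after this step I know: $X_s$ is geometrically reduced near $x_0$, hence $X_s$ is generically smooth over $k(s)$ near $x_0$ (being a geometrically reduced curve over a field, its smooth locus is dense), and $X$ itself is regular at $x_0$ provided the local ring $\cO_{X, x_0}$ is regular.

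The remaining and main point is to upgrade "$X$ normal, $X_s$ geometrically reduced at $x_0$" to "$X$ regular at $x_0$", equivalently "$X \to S$ smooth at $x_0$". Here I would use that $X \to S$ has relative dimension $1$ and $S$ is regular of dimension $1$; a normal excellent (or just normal noetherian) two-dimensional local ring $\cO_{X,x_0}$ together with the fact that its closed fiber $\cO_{X_s, x_0}$ is regular of dimension $1$ forces $\cO_{X,x_0}$ to be regular: concretely, if $\m_s \subseteq \cO_{S,s}$ is generated by a uniformizer $\pi$, then $\cO_{X_s,x_0} = \cO_{X,x_0}/\pi\cO_{X,x_0}$ is regular of dimension $1$, so $\m_{x_0}\cO_{X,x_0}$ is generated by $\pi$ together with one element lifting a regular parameter of the fiber — that is $2$ elements in a local ring of dimension $2$, hence $\cO_{X,x_0}$ is regular, and then $X \to S$ is flat with regular fibers at $x_0$, i.e. smooth. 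I would need to be a little careful that $X$ is flat over $S$ at $x_0$ (true since $X$ is a relative curve, so flat by hypothesis, or follows from $\pi$ being a nonzerodivisor on the integral scheme $X$) and that $\dim \cO_{X,x_0} = 2$ (which holds because $x_0$ is a closed point of the closed fiber and $X$ is flat of relative dimension $1$ over a $1$-dimensional base). The subtlety I expect to be the real obstacle is making sure the fiber at $x_0$ is genuinely regular of dimension exactly $1$ — i.e. that $x_0$ is not an isolated/embedded point and that $Y_s$ being geometrically reduced really transfers to $X_s$ being smooth (not merely reduced) at the specific point $x_0$ rather than only generically; I would resolve this by using the scheme-theoretic dominance $Y_s \to X_s$ from Lemma~\ref{sp-int} and the fact that a dominant morphism from a regular scheme hits the regular locus of the target only after one checks the target's local ring is regular, which is exactly the dimension-counting argument above once we know $\cO_{X_s,x_0}$ is a reduced $1$-dimensional local ring that is a quotient of the $2$-dimensional normal local ring $\cO_{X,x_0}$ by a nonzerodivisor.
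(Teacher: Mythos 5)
Your reduction to the case where $f$ is non-constant on $Y_s$, and your use of Lemma~\ref{sp-int} to conclude that $X_s$ is irreducible and (geometrically) reduced at $x_0$, both match the paper's argument. The gap is in the final step. You want to upgrade ``$X$ normal with $X_s$ reduced at $x_0$'' to ``$X$ smooth at $x_0$'' by a dimension count: $\m_{x_0}\cO_{X,x_0}$ would be generated by $\pi$ together with one lift of a regular parameter of the fiber. But this presupposes that $\cO_{X_s,x_0}$ is a \emph{regular} local ring (embedding dimension $1$), whereas Lemma~\ref{sp-int} only delivers that it is reduced; a reduced one-dimensional local ring can have embedding dimension $2$ (a cusp), in which case your count produces three generators of $\m_{x_0}$ in a two-dimensional local ring and proves nothing. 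Your proposed fix for exactly this subtlety is circular: you say you would get regularity of the fiber from ``the dimension-counting argument above once we know $\cO_{X_s,x_0}$ is a reduced $1$-dimensional local ring'', but reduced is not regular, and the auxiliary claim that a dominant morphism from a regular scheme lands in the regular locus of its target is false (the normalization of a cuspidal cubic is finite surjective from a regular curve and hits the cusp).

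That the implication you rely on is genuinely false is shown by $X=\Spec R[u,v]/(v^2-u^3-\pi)$ with $\mathrm{char}\,k(s)\neq 2,3$: since $\pi=v^2-u^3\in\m_{x_0}^2$ at the origin, $X$ is regular (hence normal), its generic fiber is smooth, and its special fiber is reduced and irreducible, yet $X_s$ has a cusp at the origin and $X\to S$ is not smooth there. What excludes this situation in the proposition is precisely the extra datum of the quasi-finite map from the smooth $Y$, and this is where the paper's proof does real work that yours skips: after reducing to $R$ complete with algebraically closed residue field (legitimate because Lemma~\ref{sp-int} makes $X_{R'}$ normal at points over $x_0$ for any dominating DVR $R'$) and cutting $Y$ down to a smooth relative curve via Lemma~\ref{bertini}, one gets a finite map $\Spec\widehat{\cO}_{Y,y_0}\to\Spec\widehat{\cO}_{X,x_0}$ and invokes a nontrivial theorem of Raynaud (\cite{Ray}, Appendice, p.~195) to conclude that $X$ is smooth at $x_0$. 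That theorem is the missing ingredient; it cannot be replaced by the normality-plus-reduced-fiber dimension count.
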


\begin{proof} {We may assume that $S=\mathrm{Spec}(R)$ is 
local, and that} $f(Y_s)$ is not one point. 
Then for all $y_0\in Y_s$ and $x_0:=f(y_0)$, we have 
$\codim_{y_0}(f^{-1}(x_0), Y_s)>0$.  
Let $R'$ be any discrete valuation ring dominating
$R$. Then $X_{R'}:=X\otimes_R R'$ 
has smooth generic fiber, and its special fiber is reduced 
at any point $x_0'$ lying over $x_0$ by Lemma \ref{sp-int}. 
So $X_{R'}$ is normal at $x_0'$. Therefore, to prove $X$ is smooth at
$x_0$, 
we can enlarge $R$ and suppose it is complete with algebraically closed residue field. Using Lemma~\ref{bertini}, we can suppose that $Y$ is a relative smooth curve. Then $f$ is quasi-finite, and $\Spec\widehat{\cO}_{Y,y_0} \to \Spec\widehat{\cO}_{X,x_0}$ is finite. 
By a result of Raynaud (\cite{Ray}, Appendice, p. 195), $X$ is smooth
at $x_0$. 
\end{proof}

\begin{remark}\label{mini-desing} {
Let $X$ be an integral relative curve over $S$ with smooth generic 
fiber. Then $X$ admits a minimal desingularization $X'\to X$,
made of a finite sequence of normalizations and blowing-ups of closed
singular points. 
See {\it e.g.} \cite{Liu}, 8.3.50 and 9.3.32 when $X$ is proper
over $S$. As the construction of $X'$ is local on $X$, and the 
minimal desingularization is unique, 
the same result holds for any integral relative curve $X$ over $S$ 
with smooth generic fiber.} 
\end{remark} 

\begin{corollary} \label{fY-reg}  Let $S$ be a Dedekind scheme, and
    let $X$ be an integral relative curve over $S$. Let
$f\colon  Y\to X$ be an $S$-morphism from a smooth $S$-scheme $Y$ to $X$. 
Let $s\in S$ {be a closed point}. 
\begin{enumerate}[{\rm (1)}] 
\item If $f(Y_s)$ is reduced to one point $x_0\in X_s$, then 
$f$ factors {as} $Y\to \widetilde{X}\to X$, where the
second morphism is the blowing-up of $X$ along the reduced center $x_0$. 
\item Suppose that $X_K$ is smooth, 
$Y$ is irreducible and that $f_K \colon Y_K\to X_K$ is dominant. Let $X'\to X$ be the minimal 
desingularization of $X$ (Remark \ref{mini-desing}). Then $f\colon Y\rightarrow X$ factors through ${X}'\to X$.   
\end{enumerate}
\end{corollary}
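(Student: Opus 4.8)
\textbf{Proof strategy for Corollary~\ref{fY-reg}.}

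The plan is to reduce both statements to local questions on $X$ and then invoke Proposition~\ref{fY-sm} together with the universal properties of blowing-up and of the minimal desingularization. Both parts are local on $X$ and on $S$, so I may assume $S = \Spec(R)$ is local with closed point $s$, and I may work near a point $x_0 \in X_s$.

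For part~(1), suppose $f(Y_s) = \{x_0\}$. I want to factor $f$ through the blowing-up $\pi\colon \widetilde{X}\to X$ of the reduced point $x_0$. By the universal property of blowing-up (\cite{Liu}, 8.1.15), a morphism $g\colon Y\to X$ factors through $\pi$ if and only if the inverse image ideal sheaf $g^{-1}(\m_{x_0})\cO_Y$ is invertible on $Y$. Since $Y$ is smooth over $S$, hence regular, every nonzero ideal sheaf that is locally principal is already handled by regularity: concretely, the reduced ideal of $x_0$ in $\cO_{X,x_0}$ pulls back, under the local homomorphism $\cO_{X,x_0}\to \cO_{Y,y_0}$ (for $y_0 \in f^{-1}(x_0)$), to an ideal of the regular local ring $\cO_{Y,y_0}$; I must check this pulled-back ideal is invertible. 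The key point is that $f(Y_s) = \{x_0\}$ forces the image ideal to be supported on the special fiber, i.e. its radical contains (a power of) a uniformizer of $R$; combined with $Y$ regular and flat over $S$, one sees the inverse image of $\m_{x_0}$ is generated by a single element (essentially a uniformizer, up to units and the structure of $\cO_{X,x_0}$), hence invertible. This gives the factorization $Y\to \widetilde{X}\to X$.

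For part~(2), assume $X_K$ smooth, $Y$ integral, and $f_K$ dominant. Recall from Remark~\ref{mini-desing} that the minimal desingularization $X'\to X$ is obtained by a finite alternating sequence of normalizations and blowing-ups of closed singular points, and is characterized by a universal property among regular models dominating $X$. I proceed by induction on the length of this sequence. At each stage we have a normal relative curve (after normalizing), and then we blow up a closed singular point $x_0$. The morphism $f$ lifts to the normalization automatically because $Y$ is normal (being regular) and $f_K$ is dominant, so $Y\to X$ factors through the normalization of $X$ by the universal property of normalization. Then, at a blowing-up step centered at a singular closed point $x_0$: if $f(Y_s) = \{x_0\}$, part~(1) already gives the lift through the blowing-up of $x_0$ (the reduced center coincides with the singular point as a set, and blowing up the reduced point dominates or equals the relevant blow-up in the desingularization sequence). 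If instead $f(Y_s)$ is not a single point, then by Proposition~\ref{fY-sm} (applied after reducing to $Y_s$ irreducible, which is legitimate since the factorization can be checked on each connected component of $Y_s$, or by passing to strict henselizations), $X$ is smooth at every point of $f(Y_s)$; in particular $X$ is already regular along $f(Y)$, so $f(Y)$ misses the center of every subsequent blowing-up and $f$ trivially factors through $X'\to X$. Iterating through the finite sequence yields the desired factorization $Y\to X'\to X$.

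\textbf{Main obstacle.} The delicate step is part~(1): verifying that the inverse image of the reduced ideal $\m_{x_0}$ becomes invertible on the smooth $S$-scheme $Y$. One has to use simultaneously that $Y$ is regular, that $Y\to S$ is flat, and that $f$ contracts the whole special fiber component to $x_0$ — so that the pulled-back ideal cuts out a divisor supported on $Y_s$ — and then that such a divisor on a regular scheme is automatically Cartier. A clean way is to reduce to $R$ complete with algebraically closed residue field (as in the proof of Proposition~\ref{fY-sm}), pick local coordinates on $\cO_{Y,y_0}$, and observe that $f^{-1}(\m_{x_0})$ has radical equal to a principal prime (the class of the uniformizer of $R$), hence is itself principal by regularity. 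Once this is in place, everything else is a formal chase through universal properties.
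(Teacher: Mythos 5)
Your overall architecture matches the paper's: part (1) via the universal property of blowing-up (show $\m_{x_0}\cO_Y$ is invertible), and part (2) by factoring through the normalization (normality of $Y$ plus dominance of $f_K$), then running the dichotomy "either $f$ contracts the relevant component of $Y_s$ to a point, in which case (1) applies, or it doesn't, in which case Proposition~\ref{fY-sm} shows $X$ is already smooth along $f(Y_s)$ and no lifting is needed", iterated through the finite desingularization sequence. Part (2) of your proposal is essentially the paper's proof.

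There is, however, a genuine flaw in your justification of part (1), precisely at the step you flag as the main obstacle. You argue that $\m_{x_0}\cO_{Y,y}$ has radical equal to the principal prime $\pi\cO_{Y,y}$ (where $\pi$ is a uniformizer of $R$) and conclude that it ``is itself principal by regularity.'' That implication is false: in a regular local ring an ideal whose radical is a principal prime need not be principal (e.g.\ $(\pi^2,\pi y)$ in $k[[\pi,y]]$ has radical $(\pi)$). The same problem affects your earlier phrasing that the pulled-back ideal ``cuts out a divisor supported on $Y_s$'' which is ``automatically Cartier'': the issue is not whether a Weil divisor is Cartier on a regular scheme, but whether the ideal sheaf is invertible at all, and support alone does not decide that. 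The correct argument is a sandwich, and it uses smoothness of $Y/S$ rather than regularity of $Y$: since $x_0\in X_s$ one has $\pi\in\m_{x_0}$, hence $\pi\cO_{Y,y}\subseteq\m_{x_0}\cO_{Y,y}$; since $f^{-1}(x_0)=Y_s$ one has $\sqrt{\m_{x_0}\cO_{Y,y}}=\sqrt{\pi\cO_{Y,y}}$; and since $Y_s$ is \emph{reduced} (because $Y\to S$ is smooth) the latter equals $\pi\cO_{Y,y}$. Therefore $\pi\cO_{Y,y}\subseteq\m_{x_0}\cO_{Y,y}\subseteq\pi\cO_{Y,y}$, forcing $\m_{x_0}\cO_{Y,y}=\pi\cO_{Y,y}$, which is invertible because $\pi$ is a nonzerodivisor by flatness. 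You have all the ingredients on the table (the containment, the description of the radical, flatness), but the inference you actually invoke to close the argument is not valid, so as written part (1) does not go through.
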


\begin{proof} (1) We have to show that $\m_{x_0}\cO_{Y}$ is an
  invertible sheaf of ideals of $\cO_Y$. Let $y\in f^{-1}(x_0)=Y_s$, and
  let $\pi$ be a generator of $\m_s\cO_{S,s}$. We have 
$$\sqrt{\m_{x_0}\cO_{Y, y}}=\sqrt{\pi \cO_{Y, y}}=\pi\cO_{Y,y}$$ 
(because $Y_s$ is reduced). 
As $\pi\cO_{Y,y}\subseteq \m_{x_0}\cO_{Y,y}$, we 
find $\m_{x_0}\cO_{Y,y}=\pi\cO_{Y,y}$. Therefore 
$\m_{x_0}\cO_{Y}$  is an invertible sheaf of ideals, and 
$Y\to X$ factors through $Y\to \widetilde{X}\to X$.  

(2) {As the minimal desingularization commutes with 
restriction to open subsets, and because the property to prove is 
local at $Y$, we can suppose $Y$ is quasi-compact. Then $f(Y)$ is 
contained in a quasi-compact open subset of $X$. Therefore we can 
also suppose $X$ is quasi-compact.}

{As $Y$ is normal and $f_K$ is dominant, $f$ factors through the
normalization of $X$. Furthermore, the normalization map of $X$ is finite
(see \cite{Liu}, 8.3.49(d)). So we can suppose $X$ is normal.} 

Let $F$ be the singular locus of $X$, which is a finite
closed subset of $X_s$. Let $X_1\to X$ be the blowing-up along $F$
(with the reduced structure). If $F\cap f(Y_s)=\emptyset$, then $f$
trivially factors {as} $Y\to X_1\to X$. In general, 
for any $x_0\in F\cap f(Y_s)$, it follows easily
from Proposition~\ref{fY-sm} that $f^{-1}(x_0)$ is a union of irreducible
components of $Y_s$ and, by (1), $f$ factors through
$X_1\to X$. Similarly {as above}, the morphism $Y\to X_1$ factors through the normalization map $X_1'\to X_1$. 
Now we start again with $Y\to X_1'$ and the process will stop 
at {the minimal} desingularization of $X$. 
\end{proof}

\end{section} 

\begin{section}{N\'eron models of proper smooth curves} \label{pj-nm-s} 

Let $S$ be a Dedekind scheme with field of functions $K$. Let $X_K$ be a proper {regular} and connected curve over $K$, of 
positive arithmetic genus ({\it i.e.}, $\dim\mathrm{H}^1(X_K,\cO_{X_K})>0$). When the base $S$ is excellent or $X_K$ is smooth over $K$, $X_K$ admits  
a unique minimal proper regular model {$\Xm$} over $S$ (see \cite{Chin}, Theorem~1.2 or \cite{Liu}, 8.3.45 and 9.3.21). Let $X_\sm$ denote the smooth locus of $\Xm/S$. 
The aim of this section is to prove the next theorem. 
See also Proposition~\ref{nm-conic} for a partial result in higher dimension.

\begin{theorem} \label{pj-nm} Let $S$ be a Dedekind scheme with field of functions
$K$. Let $X_K$ be a proper regular connected curve of positive arithmetic genus over $K$. Assume either $S$ is excellent or $X_K/K$ is smooth. Then $X_\sm$ is the N\'eron model of the smooth locus $X_{K,\sm}$ of $X_K$ over $S$. 
\end{theorem}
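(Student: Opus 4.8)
The plan is to reduce to the case where $S = \Spec(R)$ is the spectrum of a discrete valuation ring with separably (indeed algebraically) closed residue field, and then verify the N\'eron mapping property directly using the factorization result Corollary~\ref{fY-reg}. First I would invoke Corollary~\ref{neron-l2g} to reduce to the local case $S = \Spec(\cO_{S,s})$, and then Lemma~\ref{connectedness} to assume $X_K$ is geometrically connected over $K$ (replacing $K$ by its field of constants, noting that $X_{K,\sm}$ coincides with $X_K$ up to removing non-smooth points, which does not affect arithmetic genus since these are points of the generic fiber only — or rather, work with $X_K$ regular and pass to $X_{K,\sm}$ at the end). Next, using Proposition~\ref{nm-gen}~(2)--(3): when $X_K/K$ is smooth I can pass to the strict henselization $\cO_{S,s}^{\sh}$, since base change along $\cO_{S,s}\to\cO_{S,s}^{\sh}$ is a filtered limit of \'etale (hence smooth) schemes; when $S$ is only excellent, Proposition~\ref{nm-gen}~(3) and the compatibility of formation of $\Xm$ with such base change let me pass to the completion, then to a strict henselization. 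The point of these reductions is that formation of the minimal regular model $\Xm$ and of its smooth locus $X_\sm$ commutes with all these base changes (étale localization, completion in the excellent case), so it suffices to prove the theorem over a strictly henselian (or complete) discrete valuation ring.

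\textbf{The core argument.} With $S = \Spec(R)$ strictly henselian, I must check: for every smooth $S$-scheme $Y$ and every $K$-morphism $f_K \colon Y_K \to X_{K,\sm}$, there is a unique extension to an $S$-morphism $Y \to X_\sm$. Uniqueness is automatic by separatedness (Remark~\ref{univ}~(2)). By Remark~\ref{univ}~(2) I may assume $Y$ is of finite type with irreducible fibers; I may also assume $Y$ is connected, hence irreducible (being smooth). There are two cases. If $f_K$ is constant, i.e. $Y_K$ maps to a single point, then $f_K$ factors through $\Spec K \to X_K$, which corresponds to a $K$-rational point of $X_{K,\sm}$; since $\Xm$ is proper, this point extends to a section $\Spec R \to \Xm$, which by Lemma~\ref{sm-sect} lands in $X_\sm$, and composing $Y \to \Spec R \to X_\sm$ gives the extension. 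If $f_K$ is non-constant, then $f_K$ is dominant onto a curve, and in particular onto $X_K$ if $X_K$ is irreducible (which it is, being connected and regular). Then $f_K$ being dominant, I apply Corollary~\ref{fY-reg}~(2): the composite $Y_K \to X_K \hookrightarrow \Xm$ — wait, more carefully, $f_K$ gives a rational map $Y \dashrightarrow \Xm$ which is defined on $Y_K$; I need to extend over the special fiber. Here the key input is that $\Xm \to S$ is the \emph{minimal} regular model, hence $\Xm$ is already its own minimal desingularization, so Corollary~\ref{fY-reg}~(2) applied to (a quasi-compact open of) $\Xm$ shows the morphism $Y \to \Xm$ — which exists on the locus where it is defined — is forced, through the minimal desingularization which is $\Xm$ itself. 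I then need to see that the extended morphism $Y \to \Xm$ lands in $X_\sm$: this is exactly Corollary~\ref{sm-to-sm} (or Proposition~\ref{fY-sm}), since $Y$ is smooth over $S$ and, on each irreducible component of $Y_s$, $f$ is non-constant (because $f_K$ is dominant and $Y$ is flat over $S$), so $f(Y_s)$ is not a single point and $\Xm$ is smooth at each point of $f(Y_s)$.

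\textbf{Filling the gap on extending $f_K$ across the special fiber.} The one genuine issue is that Corollary~\ref{fY-reg}~(2) presumes a morphism $f \colon Y \to X$ already exists and concludes it factors through the minimal desingularization; but a priori I only have $f_K$ on the generic fiber. To produce the extension $Y \to \Xm$ I argue: since $\Xm$ is proper over $S$ and $Y$ is normal (smooth over a Dedekind scheme), the rational map $Y \dashrightarrow \Xm$ determined by $f_K$ has a locus of definition $V \subseteq Y$ whose complement has codimension $\geq 2$ in $Y$ — but that is not quite enough to extend over a divisor. Instead I proceed by restricting to curves: for each closed point $y \in Y_s$, choose (using repeated application of Lemma~\ref{bertini}, since $Y$ is smooth with $Y_s$ regular of the appropriate dimension) a regular one-dimensional subscheme $C \subseteq Y$ flat over $S$ through $y$ on which $f_K$ is non-constant; then $C$ is a regular arithmetic surface-like object of relative dimension... actually $C$ is one-dimensional over $S$, so $C$ is a trait, and $C_K \to X_K \hookrightarrow \Xm$ extends over $C$ by the valuative criterion of properness. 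This shows the rational map $Y \dashrightarrow \Xm$ is defined at every point of $Y_s$ of codimension one in $Y$ as well; combined with codimension $\geq 2$ failure being impossible for a map to a proper scheme from a normal source when the indeterminacy locus... Here the cleanest route, which I expect to be the one the authors take, is: since the indeterminacy locus of $Y \dashrightarrow \Xm$ has codimension $\geq 2$ in the regular (hence $S_2$) scheme $Y$, and $\Xm \to S$ is proper, one extends the morphism over this locus by a standard argument (e.g. \cite{BLR}, or directly: the pullback of an ample-ish sheaf extends, or use that $Y \setminus (\text{indet. locus}) \to \Xm$ together with properness and normality of $Y$ forces extension). \textbf{The main obstacle} is precisely this extension-over-codimension-one step: showing that $f_K$, a priori only rational as a map to $\Xm$, genuinely extends to a morphism on all of $Y$ — it requires combining properness of $\Xm$, normality of $Y$, flatness of $Y/S$, and the curve-slicing via Lemma~\ref{bertini} to reduce to the valuative criterion on traits. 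Once the extension $Y \to \Xm$ is in hand, landing it in $X_\sm$ via Corollary~\ref{sm-to-sm}/Proposition~\ref{fY-sm} and concluding via Corollary~\ref{fY-reg}~(2) that it is the unique such are comparatively formal.
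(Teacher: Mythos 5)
Your overall strategy (reduce to a local base, then verify the mapping property directly by extending $f_K\colon Y_K\to X_K$ to $Y\to\Xm$ and landing in $X_\sm$) is genuinely different from the paper's, and it has a real gap exactly at the step you yourself flag as ``the main obstacle.'' The rational map $Y\dashrightarrow \Xm$ determined by $f_K$ is indeed defined in codimension $1$ (valuative criterion applied to the discrete valuation rings at codimension-one points of the normal scheme $Y$), but there is no ``standard argument'' extending a rational map from a regular scheme to a \emph{proper} scheme across a codimension-$\ge 2$ indeterminacy locus: the map $\mathbb A^1_S\dashrightarrow\mathbb P^1_S$, $t\mapsto[t:\pi]$, is undefined at the single closed point $t=0$ of the special fibre and cannot be extended. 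Your curve-slicing fix does not repair this: the valuative criterion extends $f_K|_{C_K}$ over each trait $C$ through a point $y\in Y_s$, but different traits through $y$ can produce different limits (in the example above, $V(t)$ and $V(t-\pi)$ give $[0:1]$ and $[1:1]$ respectively), so these extensions need not glue to a morphism defined at $y$. The decisive sanity check is that your core argument never uses $p_a(X_K)>0$; if it worked as written it would show that $\mathbb P^1_S$ is the N\'eron model of $\mathbb P^1_K$, contradicting Proposition~\ref{nm-conic}(1).

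What actually makes the extension possible is Weil's extension theorem for group schemes, and the paper imports it in packaged form: by Proposition~\ref{embedding} (which needs $p_a>0$), $X_{K,\sm}$ is a closed subscheme of $\Pic^1_{X_K/K}$, a torsor under the Jacobian $J_K$; since the genus is positive, $J_K$ contains no copy of $\mathbb G_{a,K}$ or $\mathbb G_{m,K}$, so $J_K$ and hence the torsor $\Pic^1_{X_K/K}$ admit N\'eron models (\cite{BLR}, 10.2/1 and 6.5/4). Proposition~\ref{2nd-app-of-fY-sm} --- taking the schematic closure of $X_{K,\sm}$ in that N\'eron model, its minimal desingularization, and the smooth locus; this is where Corollary~\ref{fY-reg} is really deployed --- then produces a N\'eron model $N$ of $X_{K,\sm}$, and a final comparison using the minimality of $\Xm$ and Corollary~\ref{sm-to-sm} transfers the mapping property from $N$ to $X_\sm$. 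Your reductions, your treatment of the constant case, and the ``land in the smooth locus'' step via Corollary~\ref{sm-to-sm} are all fine, but without the detour through the Jacobian (or some substitute for Weil's extension theorem) the dominant case does not go through.
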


We will deduce Theorem~\ref{pj-nm} from the next proposition. 

\begin{proposition}[see also \cite{BLR}, 7.1/6]\label{2nd-app-of-fY-sm} Let $P_K$ be a separated
  connected smooth $K$-scheme of finite type, and let $U_K\subseteq
  P_K$ be a connected smooth closed subscheme of dimension one. Assume that
$P_K$ admits a N\'eron lft-model (resp. N\'eron model) $P$ over $S$. Then
$U_K$ admits a N\'eron lft-model (resp. N\'eron model) over $S$. 
\end{proposition}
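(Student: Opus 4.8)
The plan is to realise the N\'eron lft-model of $U_K$ as the smooth locus of a desingularization of the schematic closure of $U_K$ inside $P$, and to establish the N\'eron mapping property by pushing any test morphism into $P$, extending it there, and descending the extension. Concretely, let $X$ be the schematic closure of $U_K$ in $P$. As $U_K$ is smooth and connected over $K$ it is integral, so $X$ is an integral relative curve over $S$, closed in $P$, flat over $S$, with generic fibre $U_K$, and of finite type over $S$ whenever $P$ is. Let $X'\to X$ be the minimal desingularization of $X$ (Remark~\ref{mini-desing}) and set $N:=\sm(X'/S)$. Then $N$ is a smooth model of $U_K$; it is separated over $S$ because $X'\to X$ is proper, $X\hookrightarrow P$ is a closed immersion, and $P\to S$ is separated; and it is of finite type over $S$ whenever $P$ is. It thus remains to verify that $N$ has the N\'eron mapping property, the ``resp.\ N\'eron model'' assertion then following from the finite-typeness just recorded.

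To verify the mapping property I would reduce, by Corollary~\ref{neron-l2g}, to the case where $S=\Spec R$ is local, and then, by Remark~\ref{univ}(2), to the case where $Y$ is smooth of finite type over $R$ with irreducible fibres; such a $Y$ is integral, regular and normal, and $X_K$ is an open subscheme of $X$. By Proposition~\ref{nm-gen}(2) the base change of $P$ is still the N\'eron lft-model of $P_K$ over $R$. Given $f_K\colon Y_K\to U_K$, an $S$-extension $Y\to N$ is unique if it exists ($N$ separated, $Y$ flat), so only existence is at stake. Composing $f_K$ with the closed immersion $U_K\hookrightarrow P_K$ and using the N\'eron property of $P$, extend the result to an $S$-morphism $g\colon Y\to P$. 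Since $Y$ is flat over $S$ it coincides with the schematic closure of $Y_K$, while $g^{-1}(X)$ is a closed subscheme of $Y$ containing $Y_K$ scheme-theoretically (as $f_K$ factors through the closed subscheme $U_K=X_K$); hence $g^{-1}(X)=Y$, and $g$ factors as a morphism $Y\to X$.

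Now I would split into two cases. If $f_K$ is dominant, Corollary~\ref{fY-reg}(2) shows that $Y\to X$ factors through $X'\to X$, and by Corollary~\ref{sm-to-sm} the resulting morphism $Y\to X'$ has image in $\sm(X'/S)=N$; it extends $f_K$. If $f_K$ is not dominant, its image closure in $U_K$ is a single closed point $u$, and since $Y_K$ is geometrically reduced over $K$ the residue field $K':=\kappa(u)$ is a finite \emph{separable} extension of $K$, so $f_K$ factors as $Y_K\to\Spec K'\to U_K$. Let $D:=\overline{\{u\}}\subseteq X$ with its reduced structure, a horizontal integral curve; then $Y\to X$ factors through $D$, hence, $Y$ being normal, through the normalization $D^{\nu}\to D$. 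The morphism $D^{\nu}\to X$ sends the generic point of $D^{\nu}$ into the open subscheme $X_K$ of $X$, over which the desingularization $X'\to X$ is an isomorphism; as $X'\to X$ is proper and $D^{\nu}$ is a normal integral scheme of dimension one, this factorization extends uniquely to a morphism $D^{\nu}\to X'$ by the valuative criterion of properness. The composite $Y\to D^{\nu}\to X'$ is then an $S$-morphism with generic fibre $f_K$, and by Corollary~\ref{sm-to-sm} its image lies in $N$. In either case $f_K$ extends to $Y\to N$, as wanted.

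I expect the main obstacle to be the non-dominant case: Corollary~\ref{fY-reg}(2) does not apply there, so one must argue by hand, descending along the normalized horizontal curve $D^{\nu}$ and lifting it across the desingularization by the valuative criterion, the crucial preliminary being that $\kappa(u)$ is separable over $K$ (so that $\Spec\kappa(u)$ is itself a smooth $K$-variety and the reduction to smooth $Y$ is not lost). A subsidiary point is that one relies on the existence, and on the finite-typeness when $X$ is of finite type, of the minimal desingularization of an integral relative curve over $S$ \emph{without} assuming $S$ excellent, which is precisely what Remark~\ref{mini-desing} supplies.
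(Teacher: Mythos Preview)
Your proof is correct and follows the same overall strategy as the paper: take the smooth locus of the minimal desingularization of the schematic closure of $U_K$ in $P$, extend any test morphism to $P$ via its N\'eron property, factor through the closure, and apply Corollary~\ref{fY-reg}(2) together with Corollary~\ref{sm-to-sm} in the dominant case. The only substantive difference is in the non-dominant case. The paper reduces further to $S$ \emph{strictly} local (using that the formation of the minimal desingularization and its smooth locus commute with strict henselization, \cite{Liu}, 9.3.28): over a strictly henselian base one has either $Y=Y_K$ (when $Y_K(K)=\emptyset$) or else the image of $f_K$ is a single \emph{rational} point $q$, whose closure in the desingularization is a section contained in the smooth locus by Lemma~\ref{sm-sect}, so the extension is simply $Y\to S\xrightarrow{\sigma} N$. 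You instead stay over a merely local $S$ and handle a possibly non-rational (but separable) image point via the normalized horizontal curve $D^{\nu}$ and the valuative criterion for the proper map $X'\to X$. Both routes are valid; the paper's is shorter, while yours avoids passing to $\cO_{S,s}^{\sh}$ at the cost of the extra $D^{\nu}$ argument (whose key inputs---that $Y\to D$ is dominant so the lift to $D^{\nu}$ exists by normality of $Y$, and that $X'\to X$ is proper so the valuative criterion applies to the regular one-dimensional $D^{\nu}$---are indeed in place).
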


\begin{proof} Let $U_{0}$ denote the scheme-theoretic closure of $U_K$ inside $P$. Let $p: U\to U_0$ be the minimal desingularization of $U_0$ (Remark \ref{mini-desing}). We want to prove that the smooth locus
{$U_{\sm}$} of $U\to S$ is the N\'eron lft-model of $U_K$ over $S$. {As} the formation of $U_\sm$ commutes with localization and strict henselization
(\cite{Liu}, Proposition 9.3.28), by Proposition~\ref{nm-gen} we can suppose 
$S=\Spec(R)$ is strictly local ({\it i.e.}, $R$ is a strictly henselian
  discrete valuation ring). Let $Y$ be a smooth scheme over $S$ and
let $f_K\colon Y_K\to U_K$ be a morphism of $K$-schemes. We want to
extend $f_K$ to a morphism of $S$-schemes $Y\to U_\sm$. We can suppose
$Y_s$ is irreducible
(Remark~\ref{univ} (2)). If $Y_K(K)=\emptyset$, then $Y=Y_K$, and 
$f_K$ is a morphism from $Y$ to $U$. 

Suppose $Y_K(K)\ne\emptyset$. 
If $f_K \colon Y_K\rightarrow U_K$ is not dominant, the image  $f_K(Y_K)$ consists of a rational point $q$ of $U_K$. The Zariski closure
$\overline{\{q\}}$ of $\{q\}$ in {$U$} is contained in $U_\sm$ (Lemma~\ref{sm-sect}) and is the image of a section $\sigma\colon S\to U_\sm$. Then $f_K$ extends to $Y\to U_\sm$ as composition of the structure morphism $Y\to S$ and the section $\sigma \colon S\to U_\sm$.  

Now suppose $f_K$ is dominant. 
The morphism $Y_K\to U_K\to P_K$ extends to a dominant morphism 
$Y\to U_0$.  By Corollary~\ref{fY-reg}(2), the latter 
induces a dominant morphism $Y\to U$. Therefore $f_K$ extends to 
$Y\to U$, {hence} to $Y\to U_\sm$ (Proposition~\ref{sm-to-sm}). This shows that $U_\sm$ is the N\'eron lft-model of $U_K$ over $S$. 
\smallskip 

If $P$ is of finite type over $S$, then $U_0$ and $U$ above are 
of finite type over $S$, thus $U_\sm$ is of finite type. 
\end{proof} 

The next proposition is well known. 

\begin{proposition}\label{embedding} Let $k$ be a field, and let $C$ be a projective 
geometrically integral curve over $k$ of arithmetic genus $\geq 1$. Let
$U\subseteq C$ be the smooth locus of $C/k$. 
Then the canonical morphism 
$$U\to \Pic^1_{C/k}, \quad x\mapsto \cO_{C}(x)$$ 
(given by the invertible sheaf $\cO_{C\times_k U}(D)$, where $D$ is the
graph of the inclusion $U\to C$) is a closed immersion. 
\end{proposition}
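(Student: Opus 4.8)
The plan is to first reduce to the case where $k$ is algebraically closed, since being a closed immersion is fpqc-local on the target and can be checked after a faithfully flat base change; moreover $\Pic^1_{C/k}$ represents the relevant functor and commutes with field extension. So I would assume $k=\bar k$ from now on. Then $U\to\Pic^1_{C/k}$ is a morphism between varieties over an algebraically closed field, so by a standard criterion it suffices to show (a) the map is a monomorphism, i.e. injective on $k$-points and injective on tangent spaces (unramified and universally injective), and (b) the map is proper, or at least a locally closed immersion whose image is closed; for a quasi-projective source one typically shows it is an immersion and then that the image is closed. Since $U$ is the smooth locus of a projective curve, $U$ need not be proper, so I expect the cleanest route is: show $U\to\Pic^1_{C/k}$ is an immersion, then identify the image and show it is closed, or alternatively factor through a proper model and argue directly.

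The key steps, in order, would be the following. Injectivity on points: if $\cO_C(x)\cong\cO_C(y)$ for distinct smooth points $x,y$, then there is a rational function on $C$ with divisor $x-y$, giving a morphism $C\to\mathbb P^1_k$ of degree one through a smooth point, hence a birational morphism from a curve of arithmetic genus $\ge 1$ to $\mathbb P^1$; one derives a contradiction using $\dim\mathrm H^0(C,\cO_C(x))$, which by Riemann–Roch-type considerations (or directly, since $C$ is integral projective with $\mathrm H^0(\cO_C)=k$) equals $1$ when the arithmetic genus is positive — here is where the genus hypothesis enters. Injectivity on tangent spaces: the tangent space to $\Pic^1_{C/k}$ at $\cO_C(x)$ is $\mathrm H^1(C,\cO_C)$, and the differential of our map at $x$ is the natural map $T_xU\to\mathrm H^1(C,\cO_C)$ coming from the exact sequence $0\to\cO_C\to\cO_C(x)\to\cO_C(x)|_x\to 0$; this map is injective precisely because $\mathrm H^0(C,\cO_C(x))=\mathrm H^0(C,\cO_C)=k$, again using positive genus. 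So the morphism is unramified and universally injective, hence a monomorphism, hence an immersion onto a locally closed subscheme (using that it is of finite type between noetherian schemes and a monomorphism that is also, say, quasi-affine or via \cite{EGA}, IV.17 for "unramified $+$ radicial $+$ separated $\Rightarrow$ immersion" in suitable situations).

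Finally I would show the image is closed. The cleanest argument: consider the analogous morphism $C\to\Pic^1_{C/k}$, $x\mapsto\cO_C(x)$, defined on the smooth locus only a priori, but one can instead use the fact that $U$ is the image of the valuative-criterion limit — more concretely, since $C$ is a projective curve, any point of $\overline{U}\setminus U$ in $\Pic^1$ would be a limit of classes $\cO_C(x_i)$ with $x_i\to$ a singular point $c$ of $C$, and one checks that such a limit, if it existed in $\Pic^1_{C/k}$, would force $c$ to lie in the smooth locus (using normalization and that a family of invertible sheaves extends over a smooth curve), a contradiction. Thus $U\to\Pic^1_{C/k}$ is a closed immersion. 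The main obstacle I anticipate is precisely this last point: controlling the behaviour of $\cO_C(x)$ as $x$ degenerates toward a singular point of $C$ and confirming the image is closed rather than merely locally closed; everything before that is the standard genus-$\ge 1$ Abel–Jacobi argument. Since the proposition is asserted to be "well known," I would expect the authors simply to cite a reference (e.g. for $C$ smooth this is classical; for the singular case one may reduce to $\Pic$ of the curve and invoke the structure of generalized Jacobians), and in a final write-up I would follow suit, giving the monomorphism argument in a sentence or two and pointing to the literature for closedness.
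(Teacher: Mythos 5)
Your reduction to $k=\bar k$ and your monomorphism argument are fine, and the latter is exactly where the paper also uses $p_a(C)\ge 1$: $\mathrm{H}^0(C,\cO_C(x))=\mathrm{H}^0(C,\cO_C)=k$ for a smooth point $x$, giving injectivity on geometric points (and on tangent vectors via the coboundary into $\mathrm{H}^1(C,\cO_C)$). The gap is in the second half, in two places. First, ``finite-type monomorphism $\Rightarrow$ immersion'' is false: the bijective map $(\mathbb A^1\setminus\{0\})\sqcup\{0\}\to\mathbb A^1$ is an unramified, radicial, separated, finite-type monomorphism but not an immersion; no form of EGA IV.17 upgrades a monomorphism to an immersion without an extra hypothesis such as flatness (giving an open immersion) or properness (giving a closed immersion). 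Second, the same example (which is surjective) shows that ``monomorphism with closed image'' still does not imply ``closed immersion.'' What you actually need is properness of $U\to\Pic^1_{C/k}$, i.e.\ the full valuative criterion, not merely closedness of the image; and your sketch of the degeneration step is not yet an argument: pulling back to the normalization loses precisely the information distinguishing line bundles on $C$ (the kernel of $\Pic(C)\to\Pic(\widetilde C)$ is nontrivial), and you would still have to extend the line bundle over the valuation ring, produce a section of the special fibre cutting out a relative effective Cartier divisor of degree $1$, and observe that such a divisor is automatically a smooth point.

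The paper supplies the missing properness structurally rather than via the valuative criterion. It first identifies $U\cong\Div^1_{C/k}$ (a degree-$1$ effective Cartier divisor is supported at a point whose maximal ideal is invertible, hence at a smooth point), and then uses that the Abel map $\Div^1_{C/k}\to\Pic^1_{C/k}$ is of the form $\mathbb P(\mathcal F)\to\Pic^1_{C/k}$ for a coherent sheaf $\mathcal F$ (\cite{BLR}, Proposition 8.2/7), hence is proper with projective spaces as fibres even though $U$ itself is not proper over $k$. Your pointwise injectivity then forces every nonempty fibre to be $\Spec(k(y))$, and a proper monomorphism is a closed immersion. If you prefer to keep your architecture, replace ``show the image is closed'' by ``verify the valuative criterion of properness for $U\to\Pic^1_{C/k}$'' and carry out the line-bundle extension argument in full; otherwise the $\Div^1_{C/k}$ route is both shorter and the one the paper takes.
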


\begin{proof} 
Let $\Div_{C/k}$ be the scheme of effective Cartier divisors on 
$C$ (see \cite{BLR}, \S 8.2). Let $\Div^1_{C/k}$ be the subscheme
corresponding to effective Cartier divisors of degree $1$. 
Then the canonical morphism 
$U\to \Div_{C/k}^1$, $x\mapsto x$, is an isomorphism
(\cite{KL}, Exercise 9.3.8). 
\if 
Note that the answer given in \cite{KL}, A.9.3.8, seems too complicated
in this situation. 

Indeed, for any algebraically closed field $k'$ containing $k$, an
effective Cartier divisor on $C_{k'}$ of degree $1$ is a smooth
point and thus contained in $U_{k'}$. It is then straightforward
to check that $U$ represents the
functor $\Div_{C/k}^1$, with the universal divisor $D$ on $C\times_k U$. 

More functorially: 
Let $T$ be any $k$-scheme and let $E$ be a relative effective Cartier 
divisor of degree $1$ on $C\times_k T\to T$. Then for any $t\in T$, 
$E_{\bar{t}}$ is an effective Cartier divisor of degree $1$ on 
$C_{\bar{t}}$, so is contained in $U_{\bar{t}}$ and the projection 
$U\times_k T\to T$ induces an isomorphism $E\to T$, 
induces a morphism $E\to U$. ... 
\fi 

Let $f: \Div^1_{C/k}\to \Pic^1_{C/k}$ 
be the restriction of the canonical morphism $\Div_{C/k}\to
\Pic_{C/k}$ (corresponding to $E\mapsto \cO_{C}(E)$). 
It will be enough to show that $f$ is a closed immersion. 
It is known that $f$ can be identified to $\mathbb P(\mathcal F)\to 
\Pic^1_{C/k}$ for some coherent sheaf $\mathcal F$ on 
$\Pic^1_{C/k}$ (\cite{BLR}, Proposition 8.2/7). In particular, 
$f$ is proper and its fibers are projective spaces.  
On the other hand, the map $U\to \Pic^1_{C/k}$ is 
injective because $p_a(C)>0$. So the fiber of $f$ 
at any $y\in \mathrm{Im}(f)$ is  a projective space of dimension $0$
over $k(y)$, hence 
isomorphic to $\Spec(k(y))$. This implies that $f$ is 
a proper (hence closed) immersion. 
\end{proof} 

\noindent{\it Proof of Theorem \ref{pj-nm}:} As $X_{\sm}$ is a
finite type scheme over $S$, it is enough to show $X_\sm$ is the
N\'eron lft-model of {$X_{K,\sm}$}. By 
Corollary~\ref{neron-l2g}, to show our theorem we can suppose $S$ is 
local. Consider the closed immersion $f: X_{K,\sm}\to \Pic^1_{X_K/K}$ 
defined in Proposition~\ref{embedding}. On the other hand, 
$\mathrm{Pic}^1_{X_K/K}$ is a torsor under $J_K:=\mathrm{Pic}^0_{X_K/K}$, and $J_K$ has no subgroup isomorphic to 
$\mathbb G_{a,K}$ or $\mathbb G_{m,K}$ (\cite{Ray2}, Proposition~1.1). Hence $J_K$ has a N\'eron model over $S$ (\cite{BLR}, Theorem 10.2/1)
as well as $\mathrm{Pic}^1_{X_K/K}$ (\cite{BLR}, Corollary 6.5/4). 
By Proposition~\ref{2nd-app-of-fY-sm}, $X_{K, \sm}$ has a N\'eron model $N$
over $S$. Embed $N$ into a proper model, and resolve the 
singularities without modifying the regular locus (which contains $N$). 
Then we get a proper regular model {$N'$} containing $N$ as an open
subset. The identity on $X_K$ extends to a morphism ${N'}\to \Xm$. 
By Corollary \ref{sm-to-sm}, this morphism induces a morphism $N\to
X_\sm$ which is an isomorphism on the generic fiber. Therefore
$X_\sm$ satisfies the N\'eron mapping property. 
\qed 

\begin{remark} Keep the notation of Theorem~\ref{pj-nm}. 
\begin{enumerate}
\item The N\'eron model $X_\sm$ is not necessarily 
faithfully flat over $S$. Indeed, if $\Xm$ has a multiple
fiber above some point $s\in S$, then $(X_\sm)_s=\emptyset$. 
However, the faithful flatness holds if $X_K(K)\ne\emptyset$. 
\item Assume $X_K$ is smooth over $K$ and $X_K(K)\neq \emptyset$, and embed $X_K$ into its
    Jacobian $J_K$ by using some rational point of $X_K$. The
proof of Theorem~\ref{pj-nm} shows that the smooth locus of the minimal desingularization of the scheme-theoretic closure $\overline{X_K}\subset J$ is isomorphic to $X_{\sm}$, where  $J$ denotes the N\'eron model of $J_K$.
Note that in general, even when $\Xm$ is semi-stable, $X_\sm\to J$
is not an immersion, see \cite{Edix}, Proposition 9.5. 
\end{enumerate}
\end{remark}

\begin{remark} \label{nm-ind1} Suppose $S=\Spec(R)$ is local. Let 
$R\to R'$ be an extension of discrete valuation rings of index $1$
({\it i.e.}, ramification index $1$ and separable residue extension),
let $K'=\Frac(R')$. Then under the condition of Theorem~\ref{pj-nm}, 
$X_\sm\times_S \Spec(R')$ is the N\'eron model of $X_{K',\sm}$ over
$R'$. Indeed, the formation of $X_\sm$ {commutes with completion 
(\cite{Liu}, 9.3.28). Applying Proposition~\ref{nm-gen}~(3) to the extension
$\widehat{R}\to \widehat{R'}$, we see that $X_\sm\times_{S} \Spec(\widehat{R'})$
is the $\widehat{R'}$-N\'eron model, hence $X_\sm\times_S \Spec(R')$ 
is the N\'eron $R'$-N\'eron model by Proposition~\ref{nm-gen} (1).} 
\end{remark}

\begin{remark} 
For any smooth connected curve $U_K$ over $K$, if it can be
  embedded as a closed subscheme into a semi-abelian $K$-variety (or
  more generally, into a smooth $K$-group scheme of finite type
  admitting a $S$-N\'eron lft-model),
  Proposition~\ref{2nd-app-of-fY-sm} implies immediately that $U_K$
  admits also an $S$-N\'eron lft-model. For example, this works if
  $U_K$ is proper of positive genus, or if $U_K$
  is affine such that the reduced divisor at infinity of $U_K$ in its regular compactification is separable of degree $>1$ over $K$. But this method does not apply for \emph{all}
  curves: for instance the complement of a rational point in
a proper smooth connected curve can not be embedded as a closed subscheme into any semi-abelian variety over $K$. In the second part of this work (\S \ref{af-nm-s}--\S
\ref{nm-rat-gl}), we propose a different approach which works for any affine curve. Even better, our method allows a very explicit description of the N\'eron lft-model with the help of minimal proper regular model of the regular compactification of the affine curve. 
\end{remark}

\begin{corollary} \label{cov-gr} Let $S$ be a Dedekind scheme with
  field of functions $K$, and let $X_K$ be a connected smooth
 proper curve over $K$ of positive genus. 
Suppose that there exist a proper smooth variety $Y_K$ having 
good reduction ({\it i.e.}, having a proper smooth model $Y/S$) and 
a dominant morphism $f_K\colon Y_K\to X_K$ over $K$. Then $X_K$ 
has good reduction.   
\end{corollary}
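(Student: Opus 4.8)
The plan is to use the Néron mapping property of $X_{\sm}$ together with the properness of the good model $Y/S$. By Theorem~\ref{pj-nm}, $X_{\sm}$ is the N\'eron model of $X_K$ over $S$ (here $X_K$ is smooth, so no excellence hypothesis is needed). First I would apply the N\'eron mapping property to the smooth $S$-scheme $Y$: the morphism $f_K\colon Y_K\to X_K$ extends uniquely to an $S$-morphism $f\colon Y\to X_{\sm}$. The point is now to transfer properness from $Y$ to $X_{\sm}$. Since $f_K$ is dominant and $Y_K$, $X_K$ are integral of the same dimension (both proper curves over $K$, with $f_K$ dominant between them, so $f_K$ is finite), the extended morphism $f$ is dominant: its image is a constructible set containing the generic point of $X_{\sm}$, hence is dense.

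Next I would show $f\colon Y\to X_{\sm}$ is surjective, or at least that its image is all of $X_{\sm}$. Because $Y\to S$ is proper and $X_{\sm}\to S$ is separated, $f(Y)$ is a closed subset of $X_{\sm}$; being also dense, we get $f(Y)=X_{\sm}$, so $X_{\sm}\to S$ is a surjective morphism. Moreover $X_{\sm}\to S$ factors the proper morphism $Y\to S$ through the separated morphism $X_{\sm}\to S$, and a standard argument (e.g. \cite{EGA}, II.5.4.3, applied to the surjective $S$-morphism $f$ from the proper $S$-scheme $Y$) shows that $X_{\sm}$ is proper over $S$. Concretely: $X_{\sm}$ is of finite type over $S$ by construction; it is separated over $S$; and since $Y\to S$ is universally closed and $f$ is surjective, for any base change $T\to S$ the map $(X_{\sm})_T\to T$ is closed because it is the image of the closed map $Y_T\to T$ under the surjection $Y_T\to (X_{\sm})_T$. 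Hence $X_{\sm}\to S$ is universally closed, thus proper.

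It remains to deduce good reduction of $X_K$. We now know $X_{\sm}$ is a proper smooth $S$-scheme with generic fiber $X_K$, hence $X_K$ has a proper smooth model over $S$, i.e. good reduction. One should also check $X_{\sm}$ is faithfully flat over $S$: surjectivity of $X_{\sm}\to S$ is established above, and flatness follows since $X_{\sm}$ is an open subscheme of the minimal proper regular model $\Xm$, which is flat over the Dedekind scheme $S$ (every integral domain flat over a Dedekind domain is torsion-free over it). So $X_{\sm}$ is proper, smooth, and faithfully flat over $S$, giving good reduction of $X_K$.

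The main obstacle I anticipate is the surjectivity/properness transfer: one must be careful that $f(Y)=X_{\sm}$ (not merely dense), which uses that $Y\to S$ is closed and $X_{\sm}\to S$ separated, and then that surjectivity of $f$ together with properness of $Y/S$ forces $X_{\sm}/S$ to be universally closed. A secondary subtlety is confirming $f$ is dominant: this rests on $f_K$ being dominant between proper curves (hence finite and surjective on generic fibers) and on the density of the generic fiber in $X_{\sm}$, which holds because $X_{\sm}$ is flat over $S$ with $S$ irreducible.
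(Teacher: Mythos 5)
Your proof is correct and follows essentially the same route as the paper: extend $f_K$ to $f\colon Y\to X_\sm$ by the N\'eron mapping property, observe that $f(Y)$ is closed (properness of $Y/S$ plus separatedness of $X_\sm/S$) and dense (it contains $X_K$), hence equals $X_\sm$, and conclude that $X_\sm$ is proper over $S$. The extra verifications you spell out (dominance of $f$, universal closedness via surjectivity, faithful flatness) are standard and only make explicit what the paper leaves implicit.
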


\begin{proof} Let $X_\sm$ be the N\'eron model of $X_K$ over $S$. Then $f_K$
extends to $f: Y\to X_\sm$. 
As $Y$ is proper over $S$, the image $f(Y)$ 
is closed in $X_\sm$ and is dense because it contains $X_K$. So $f(Y)=X_\sm$.
Therefore $X_\sm$ is proper over $S$, and $X_K$ has good reduction. 
\end{proof}

\begin{remark} One can give a direct proof of Corollary~\ref{cov-gr}
  if the smooth $S$-scheme $Y/S$ is assumed to be
  \emph{projective}. Indeed, it is enough to show $X_\sm=\Xm$. So 
one can suppose $S$ is local. 
Using Bertini-type result, we may find a
  smooth closed subscheme of $Y/S$ such that its generic fiber dominates
  again $X_K$. Then we can repeat this argument to lower the relative
  dimension of 
the scheme $Y/S$ until we find a smooth relative curve over $S$ whose
generic fiber still dominates $X_K$. Finally we only need to apply
\cite{LL}, Corollary 4.10 to conclude. 
\end{remark}

\begin{remark} Corollary \ref{cov-gr} does not hold in general if $g(X_K)=0$. 
Let us consider the following example. Let 
$R$ be a henselian discrete valuation ring with finite residue 
field $k$. 
Let $k'/k$ be a quadratic extension and let $T^2+aT+b\in R[T]$ be a lifting 
of the minimal polynomial of a generator of $k'/k$. 
Consider the scheme 
$$X=\Proj R[x,y,z]/(x^2+axy+by^2+\pi z^2)$$
where $\pi$ is a uniformizing element of $R$. 
Let $R'=R[T]/(T^2+aT+b)$. This is a finite \'etale
extension of $R$.  Let $K=\Frac(R), K'=\Frac(R')$. Then 
$X_{K'}\cong 
\mathbb P^1_{K'}$. So $X_{K'}$ has good reduction over $R'$, hence
over $R$ when it is viewed as a $K$-scheme (a smooth projective 
model over $R'$ is also smooth projective over  $R$). 

We have a surjective $K$-morphism $X_{K'}\to X_K$. However, 
$X_K$ does not have good reduction over $S$. Indeed, 
suppose $X_K$ has a proper smooth model $P\to S$. Then $P_k$ 
is a smooth conic over a finite field, hence has a rational point. 
As $R$ is henselian, then $X_K=P_K$ has a rational point. The latter 
then specializes to the (unique) rational point of $X_k$. But
this is a singular point of $X_k$. As $X$ is regular, we 
have a contradiction by Lemma~\ref{sm-sect}. 
\end{remark}

\begin{corollary} Let $X_K$ be a proper smooth connected curve of 
positive genus over $K$. Suppose that for some proper smooth connected
variety $Y_K$ over $K$, the product $X_K\times_K Y_K$ has good
reduction over $S$. Then $X_K$ has good reduction over $S$.   
\end{corollary}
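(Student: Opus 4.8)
The plan is to deduce the statement directly from Corollary~\ref{cov-gr}, which is designed for exactly this purpose. By hypothesis the product $X_K\times_K Y_K$ has good reduction over $S$, i.e.\ it admits a proper smooth model $Z/S$. Moreover $X_K\times_K Y_K$ is itself a proper smooth $K$-variety: properness and smoothness are each stable under base change and under composition, so $X_K\times_K Y_K\to Y_K\to\Spec(K)$ is proper and smooth. Finally, since $Y_K$ is a (nonempty) variety, the structure morphism $Y_K\to\Spec(K)$ is surjective, hence so is the first projection $\mathrm{pr}_1\colon X_K\times_K Y_K\to X_K$ obtained by base change; in particular $\mathrm{pr}_1$ is dominant.

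I would then apply Corollary~\ref{cov-gr} with $X_K\times_K Y_K$ playing the role of the variety ``$Y_K$'' there and $\mathrm{pr}_1$ playing the role of ``$f_K$'': indeed $X_K$ is a connected smooth proper curve of positive genus over $K$, the scheme $X_K\times_K Y_K$ is a proper smooth variety having good reduction over $S$, and $\mathrm{pr}_1$ is a dominant $K$-morphism onto $X_K$. The conclusion of Corollary~\ref{cov-gr} is precisely that $X_K$ has good reduction over $S$, which is what we want.

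The only point requiring a word of care is that one needs $Y_K\ne\emptyset$ in order for $\mathrm{pr}_1$ to be dominant (if $Y_K$ were empty the product would be empty and the statement would fail); this is automatic since a variety is nonempty by convention. Beyond this there is no real obstacle: the result is an immediate corollary, resting ultimately on Theorem~\ref{pj-nm} (that $X_\sm$ is the N\'eron model of $X_K$) through Corollary~\ref{cov-gr}.
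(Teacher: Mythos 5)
Your proposal is correct and is exactly the paper's argument: the paper also proves this by applying Corollary~\ref{cov-gr} to the projection $X_K\times_K Y_K\to X_K$. The extra verifications you supply (properness, smoothness, and dominance of the projection, with $Y_K\neq\emptyset$) are routine and consistent with the paper's one-line proof.
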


\begin{proof} Apply Corollary \ref{cov-gr} to the projection 
$X_K\times_K Y_K\to X_K$.
\end{proof}

Let $f_K\colon Y_K\to X_K$ be a finite morphism of {proper} smooth and connected curves over $K$. Suppose $g(X_K)\ge 1$. Let $Y_{\mathrm{min}}, \Xm$ be the respective minimal proper regular
models of $Y_K, X_K$ over $S$. In general, $f_K$ does not extend to a morphism $Y_{\mathrm{min}}\to \Xm$ (\cite{LL}, Remark 4.5). However, Theorem~\ref{pj-nm} immediately implies the next corollary, answering positively a question raised by A. Pirutka. 

\begin{corollary} Let $f_K: Y_K\to X_K$ be a finite morphism of 
{proper} smooth and connected curves over $K$, with $g(X_K)\ge 1$.  Let $Y_\sm$ (resp. $X_\sm$) be the smooth
locus of the minimal proper regular model of $Y_K$ (resp. $X_K$) over $S$. 
Then $f_K$ extends to a morphism $f : Y_\sm \to X_\sm$. 
\end{corollary}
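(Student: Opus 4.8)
The plan is to deduce this directly from Theorem~\ref{pj-nm} together with the N\'eron mapping property, exploiting the fact that $Y_\sm$ is a smooth $S$-scheme whose generic fiber is $Y_K$, and that $X_\sm$ is (by Theorem~\ref{pj-nm}) the N\'eron model of $X_K$ over $S$. First I would observe that since $g(X_K)\ge 1$, the curve $X_K$ is of positive genus, so Theorem~\ref{pj-nm} applies and $X_\sm$, the smooth locus of $\Xm$, is the N\'eron model of $X_K$ over $S$ (here $X_K$ is already smooth, so the hypothesis ``$S$ excellent or $X_K/K$ smooth'' is automatic). Next, $Y_\sm$, being the smooth locus of the minimal proper regular model $Y_{\mathrm{min}}$ of $Y_K$, is by construction a smooth separated $S$-scheme, and its generic fiber is $Y_K$ (the generic fiber of $Y_{\mathrm{min}}$ is $Y_K$, which is smooth, so $Y_K\subseteq Y_\sm$).

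With these two facts in hand, the extension is immediate: the morphism $f_K\colon Y_K\to X_K$ is a $K$-morphism between the generic fibers, $Y_\sm/S$ is smooth, and $X_\sm$ is the N\'eron model of $X_K$; hence by the N\'eron mapping property (Definition~\ref{def-nm}), the canonical map
\[
\Mor_S(Y_\sm, X_\sm)\to \Mor_K((Y_\sm)_K, X_K)=\Mor_K(Y_K, X_K)
\]
is a bijection, so $f_K$ extends (uniquely) to an $S$-morphism $f\colon Y_\sm\to X_\sm$, as desired. Note that finiteness of $f_K$ is not even used; any $K$-morphism $Y_K\to X_K$ would extend.

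There is essentially no obstacle here: the entire content of the corollary is packaged into Theorem~\ref{pj-nm}. The only point requiring a word of care is the identification of the generic fiber of $Y_\sm$ with $Y_K$, which holds because $Y_K$ is smooth over $K$ and therefore equals the smooth locus of $(Y_{\mathrm{min}})_K=Y_K$; equivalently, the open immersion $Y_\sm\hookrightarrow Y_{\mathrm{min}}$ is an isomorphism on generic fibers. Once this is noted, the proof is a one-line invocation of the universal property.
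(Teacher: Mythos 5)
Your proof is correct and is exactly the argument the paper intends: the corollary is stated there as an immediate consequence of Theorem~\ref{pj-nm}, obtained precisely by applying the N\'eron mapping property of $X_\sm$ to the smooth $S$-scheme $Y_\sm$ whose generic fiber is $Y_K$. One small caveat to your closing remark: the finiteness of $f_K$ is not entirely idle, since it forces $g(Y_K)\ge g(X_K)\ge 1$, which is what makes ``the'' minimal proper regular model $Y_{\mathrm{min}}$ (and hence $Y_\sm$) well defined in the first place.
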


For the sake of completeness, let us consider the N\'eron 
lft-models of curves of genus $0$. 

\begin{proposition}\label{nm-conic} Let $X_K$ be a smooth projective
conic over $K$. 
\begin{enumerate}[{\rm (1)}] 
\item If $X_K=\mathbb P^1_K$, then $X_K$ does not have N\'eron lft-model
over $S$. 
\item In general, $X_K$ has a N\'eron lft-model over $S$ if and only
if $S$ is semi-local and if $X_K(K_s^{\sh})=\emptyset$  
for any closed point $s\in S$, where $K_s^{\sh}$ denotes the fraction 
field of the strict henselization of $\cO_{S,s}$. In this case, $X_K$ is its
own N\'eron model over $S$. 
\end{enumerate}
\end{proposition}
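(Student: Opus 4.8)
The plan is to prove Proposition~\ref{nm-conic} in two steps, using the general machinery set up earlier (especially Lemma~\ref{connectedness}, Proposition~\ref{nm-gen}, Lemma~\ref{sm-sect} and Corollary~\ref{sm-to-sm}) together with the basic fact that $\mathbb P^n$-bundles and smooth proper models behave well under étale-local arguments.

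First, for part (1), I would show $\mathbb P^1_K$ has no N\'eron lft-model over $S$. By Corollary~\ref{neron-l2g} it suffices to treat $S=\Spec(R)$ local, say with uniformizer $\pi$. Suppose $X$ were an $S$-N\'eron lft-model of $\mathbb P^1_K$. Apply the N\'eron mapping property to the smooth $S$-scheme $Y=\mathbb P^1_S$ and the identity map on the generic fiber: we get a morphism $\mathbb P^1_S\to X$ extending the identity; similarly, applying the property to $\mathbb A^1_S$ we may produce many distinct sections $S\to X$ (restricting the two maps $t\mapsto t$ and $t\mapsto \pi^{-1}t$... — better: I would use that any two $K$-points of $\mathbb P^1_K$ that coincide nowhere still have closures meeting in the special fiber of $\mathbb P^1_S$ but must be disjoint in a smooth separated $X$, forcing $X$ to have infinitely many points in its special fiber while the map $\mathbb P^1_S\to X$ is surjective on special fibers — a contradiction with $X$ separated of finite... of \emph{locally} finite type). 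Concretely: the three constant sections $0,1,\infty$ of $\mathbb P^1_S$ extend to sections of $X$, pairwise disjoint since $X/S$ is smooth and separated (Lemma~\ref{sm-sect}); but the automorphism group $\mathrm{PGL}_2(\cO_{S,s})$ of $\mathbb P^1_S$ acts, and conjugating by $\mathrm{diag}(\pi,1)$ gives sections whose special points all collide with the section $\{0\}$ downstairs but must stay disjoint upstairs, an impossibility since $X_s$ would then need infinitely many geometric points over $k(s)$ all lying over one point, contradicting that $X_s$ is a $k(s)$-variety of dimension $1$. I expect this to be the delicate step; the cleanest route is probably to invoke that a N\'eron lft-model of $\mathbb P^1_K$ would have to be $\mathbb P^1_S$ itself by the mapping property applied to $\mathbb P^1_S\to X$ and $X\hookrightarrow$(a proper model), and then observe $\mathbb P^1_S$ fails the mapping property because $\mathbb A^1_K\to\mathbb P^1_K$ from $Y_K=\mathbb A^1_K$ with a pole does not extend.

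Second, for part (2), suppose $X_K$ is a conic, not isomorphic to $\mathbb P^1_K$. The ``if'' direction: assume $S$ is semi-local and $X_K(K_s^{\sh})=\emptyset$ for all closed $s$. I would show $X_K$ is its own N\'eron model, i.e.\ that $X_K$ (as an $S$-scheme via the structure map to $\Spec K\to S$, viewed as... ) — more precisely one checks $X_K$ itself, considered as a scheme over $S$ through $\Spec K\hookrightarrow S$, trivially satisfies the mapping property: given $Y/S$ smooth and $f_K\colon Y_K\to X_K$, I must extend it to $Y\to X_K$ over $S$, i.e.\ show $Y\to S$ factors through $\Spec K$, i.e.\ $Y$ has empty special fibers. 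If some $Y_s\neq\emptyset$, pick a closed point $y\in Y_s$; since $Y/S$ is smooth, $\cO_{Y,y}^{\sh}$ dominates $\cO_{S,s}^{\sh}$ and the smoothness produces an $\cO_{S,s}^{\sh}$-point of $Y$ through $y$ (Hensel), whose generic fiber gives a $K_s^{\sh}$-point of $Y_K$, hence a $K_s^{\sh}$-point of $X_K$ via $f_K$, contradicting the hypothesis. So $Y$ is a $K$-scheme and $f_K$ already is the required morphism. Uniqueness is automatic by separatedness (Remark~\ref{univ}(2)). The ``only if'' direction: suppose $X_K$ has an $S$-N\'eron lft-model $X$. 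If $X_K(K_s^{\sh})\neq\emptyset$ for some $s$, then by Proposition~\ref{nm-gen}(2) (passing to the strict henselization, a filtered limit of étale $S$-schemes) $X_{\cO_{S,s}^{\sh}}$ is the N\'eron lft-model over $\cO_{S,s}^{\sh}$, and a $K_s^{\sh}$-point extends to a section, so the special fiber is nonempty; but a smooth separated model of a conic over a henselian dvr with a rational point in the generic fiber, carrying a section, forces — via Lemma~\ref{sm-sect} and regularity — that $X_K$ has good reduction, so $X$ contains a smooth conic with a rational point over the residue field, hence $X_K\cong\mathbb P^1_{K_s^{\sh}}$, and then by faithfully flat descent (or directly) $X_K\cong\mathbb P^1_K$, contradicting part (1) which shows $\mathbb P^1$ has no N\'eron lft-model (here is where the hypothesis $X_K\neq\mathbb P^1_K$ in the interesting case and the reduction to (1) combine). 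Finally, if $S$ is not semi-local, it has infinitely many closed points; I would derive a contradiction by choosing a conic which, after the étale base change trivializing it (as in the Remark with $R'=R[T]/(T^2+aT+b)$), shows the N\'eron lft-model would have to restrict to the nonexistent N\'eron model of $\mathbb P^1$ over infinitely many localizations — more carefully, use Lemma~\ref{connectedness} with the field of constants: a conic that is nonsplit has trivial field of constants, so $T=S$ there, and one shows directly that a smooth separated model over non-semi-local $S$ cannot have empty special fibers everywhere while still being a model, forcing split reduction somewhere and again reducing to (1). The main obstacle I anticipate is this last point — cleanly ruling out the non-semi-local case — and also making rigorous in part (1) the passage ``a N\'eron lft-model of $\mathbb P^1_K$ must be $\mathbb P^1_S$''; for the latter I would lean on the mapping property applied both ways between the putative model and $\mathbb P^1_S$ to get mutually inverse $S$-morphisms on a dense open, then use separatedness and normality of $\mathbb P^1_S$ to conclude an isomorphism, and finally exhibit the failing test object $Y_K=\mathbb A^1_K\to\mathbb P^1_K$ (a nonconstant map with image meeting the section at infinity) which does not extend over $S$.
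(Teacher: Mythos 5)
Your overall strategy coincides with the paper's: for (1), show that a putative N\'eron lft-model of $\mathbb P^1_K$ would have to be $\mathbb P^1_S$ itself and then exhibit a $K$-morphism that does not extend over $S$; for (2), reduce to (1) over $\cO_{S,s}^{\sh}$ and observe that when $X_K(K_s^{\sh})=\emptyset$ for all $s$ the mapping property is vacuous. However, the crucial step of (1) --- proving that the morphism $f\colon \mathbb P^1_S\to X$ furnished by the mapping property is an isomorphism --- is not correctly established in any of your attempts. The ``mapping property applied both ways'' does not produce a morphism $X\to\mathbb P^1_S$: the N\'eron property only yields morphisms \emph{into} $X$ from smooth $S$-schemes, and a rational map from the regular scheme $X$ to the proper scheme $\mathbb P^1_S$ need only be defined outside a closed subset of codimension $2$, so there is no inverse to glue. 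Your other attempt rests on the claim that distinct sections of a smooth separated $S$-scheme are pairwise disjoint, which is false (Lemma~\ref{sm-sect} only places sections in the smooth locus; the sections $t=0$ and $t=\pi$ of $\mathbb A^1_S$ meet in the special fiber). The missing ingredient is the paper's argument: $f(\mathbb P^1_S)$ is closed in $X$ since $\mathbb P^1_S$ is proper and $X$ separated; by Chevalley semicontinuity its special fiber has dimension $1$, so $f_s$ is quasi-finite; Zariski's Main Theorem then makes $f$ an open immersion, and properness plus irreducibility of $X$ makes it an isomorphism. Your concluding test object ($\mathbb A^1_K\to\mathbb P^1_K$ with a pole, not extending over $\mathbb P^1_S$) is fine once this is in place.

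In part (2) there are two further problems. First, from $X_K(K_s^{\sh})\neq\emptyset$ you should conclude $X_{K_s^{\sh}}\cong\mathbb P^1_{K_s^{\sh}}$ and contradict part (1) \emph{over} $\Spec(\cO_{S,s}^{\sh})$ (using Proposition~\ref{nm-gen}(2)); your detour through ``$X_K\cong\mathbb P^1_K$ by faithfully flat descent'' is wrong, since being $\mathbb P^1$ does not descend along $K\to K_s^{\sh}$ (a nonsplit conic can split after strict henselization). Second, the semi-locality of $S$ is left as an acknowledged obstacle, and your sketch for it does not work: a N\'eron model with empty special fibers everywhere is exactly what happens in the semi-local case, so that is not the contradiction. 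The correct point is a spreading-out argument: the closure of $X_K$ in $\mathbb P^2_S$ is a smooth proper conic bundle over a dense open $V\subseteq S$, its fiber over any closed $s\in V$ is a smooth conic over $k(s)$ and hence has a $k(s)^{\mathrm{sep}}$-point, which lifts by henselianity to a $K_s^{\sh}$-point of $X_K$; since $X_K(K_s^{\sh})=\emptyset$ for all closed $s$, the finite set $S\setminus V$ must contain every closed point, so $S$ is semi-local.
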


\begin{proof} (1) We will argue by contradiction:
assume that $\mathbb P_K^1$ admits a N\'eron lft-model $P$ over $S$.
By the N\'eron mapping property, there exists a 
  morphism of $S$-schemes $f\colon \mathbb{P}_{S}^{1}\rightarrow P$
  extending the canonical identification between the generic fibers. 
We claim that $f$ is an isomorphism. As $\mathbb P_S^1$ is proper, and
$P/S$ is separated, the image $f(\mathbb P_S^1)$ is a closed subset of $P$.
Its closed fiber has dimension $1$ by Chevalley's semi-continuity 
theorem (\cite{EGA}, IV.13.1.1). Therefore $\mathbb P^1_{k(s)}\to P_s$
is quasi-finite.  By Zariski's Main Theorem, $f$ is an open
immersion. But $f$ is proper and $P$ is irreducible, thus 
$f : \mathbb{P}_{S}^{1}\to P$ is an isomorphism. 

On the other hand, there are many endomorphisms of
$\mathbb{P}_{K}^{1}$ that 
can not extend to an endomorphism of $\mathbb{P}_{S}^{1}$, hence
$\mathbb{P}_{S}^{1}$ is not the N\'eron lft-model of $\mathbb
P_K^1$. Contradiction.

(2) Assume first that $X_K$ admits a N\'eron lft-model over $S$. 
By Proposition~\ref{nm-gen}~(2), $X_{K_s^{\sh}}$ admits a N\'eron lft-model over 
$\mathrm{Spec}(\cO_{S,s}^{\sh})$. As a result, 
$X_{K_s^{\sh}}\not\cong \mathbb{P}_{K_s^{\sh}}^1$. In other words, 
$X_K(K_s^{\sh})=\emptyset$ for all closed points $s\in S$. On the other hand, this condition implies that $S$ is semi-local.
Conversely, assume $X_K(K_s^{\sh})=\emptyset$ for any
closed point $s\in S$. As $S$ is semi-local, $X_K$ is of finite type over
$S$ and is its own $S$-N\'eron model. 
\end{proof}

\noindent {\bf Higher dimension.} \ 
Let $V$ be an algebraic variety 
over an algebraically closed field $k$. We say that 
\emph{$V$ contains a rational curve} if $V$ has a subscheme isomorphic to an open 
dense subscheme of $\mathbb P^1_k$. 
It is easy to see that if $V$ does not contain any rational curve,  then for any algebraically 
closed field extension $K/k$, $V_K$ does not contain any rational curve.

\begin{proposition} \label{hd-nm} Let $S$ be a Dedekind scheme with field of functions $K$. Let $X_K$ be a smooth proper
  algebraic variety over $K$. Suppose $X_K$ has a proper regular 
model $X$ over $S$ such that no  geometric fiber $X_{\bar{s}}$, 
$s\in S$, contains a rational curve. Then the 
smooth locus $X_\sm$ of $X$ is the N\'eron model
of $X_K$. 
\end{proposition}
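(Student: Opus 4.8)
\textbf{Proof proposal for Proposition~\ref{hd-nm}.}

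The plan is to mimic the proof of Theorem~\ref{pj-nm}, but without needing an embedding into a group scheme: the obstruction to extending morphisms — the presence of rational curves in the special fibers — has been ruled out by hypothesis, so I can work directly with $X$. First I would reduce to the case $S = \Spec(R)$ strictly local (even strictly henselian), using Corollary~\ref{neron-l2g} and the fact that the formation of the smooth locus $X_\sm$ commutes with localization and strict henselization; I should also remark that $X_\sm$ is of finite type over $S$ since $X$ is proper, so it suffices to verify the N\'eron mapping property (equivalently, that $X_\sm$ is the N\'eron lft-model). By Remark~\ref{univ}~(2), I may test the property on a smooth $S$-scheme $Y$ of finite type with $Y_s$ irreducible, together with a $K$-morphism $f_K\colon Y_K \to X_K$ to be extended to $Y \to X_\sm$.

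The heart of the argument is to extend $f_K$ to a morphism $Y \to X$. Here I would take $Y_0$ to be the scheme-theoretic closure of the graph of $f_K$ inside $Y \times_S X$, so that the first projection $p\colon Y_0 \to Y$ is proper, an isomorphism on generic fibers, and $Y_0$ carries a morphism $q\colon Y_0 \to X$ extending $f_K$. The key claim is that $p$ is an isomorphism, i.e., that $p$ does not contract anything in the special fiber. Suppose $p$ is not quasi-finite at some point; then there is a positive-dimensional fiber $p^{-1}(y)$, $y \in Y_s$, which is proper over $k(y)$, and its image under $q$ in $X_{\bar s}$ would be a positive-dimensional proper subvariety — but I need to produce an actual rational curve to contradict the hypothesis. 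This is where the smoothness of $Y$ enters: since $Y$ is regular and $Y \to S$ is flat, a standard argument on contractions of birational proper morphisms to regular schemes (the rigidity / "van der Waerden purity" phenomenon, or directly: a proper birational morphism from a scheme to a regular scheme cannot have a positive-dimensional fiber unless it contains a rational curve — cf.\ the use of \cite{Ray}, Appendice in Proposition~\ref{fY-sm}) shows that the exceptional locus of $p$ is covered by rational curves, whose images in $X_{\bar s}$ are again rational curves (or points). Ruling out that the image is a point requires that $q$ be non-constant on these contracted curves, which holds because $q$ composed with $p^{-1}$ on the generic fiber is $f_K$ and the contracted locus is vertical; pushing a nonconstant map from (an open subset of) $\mathbb P^1$ to $X_{\bar s}$ yields a rational curve in $X_{\bar s}$, contradicting the hypothesis. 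Hence $p$ is an isomorphism and $f_K$ extends to $Y \to X$.

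Once $f_K$ extends to $f\colon Y \to X$, I invoke Corollary~\ref{sm-to-sm}: since $Y$ is smooth over $S$ and $X$ is regular, $f(Y) \subseteq \sm(X/S) = X_\sm$, so $f$ factors through $X_\sm$. Uniqueness of the extension follows from separatedness of $X_\sm$ (Remark~\ref{univ}~(2)). This establishes that $X_\sm$ satisfies the N\'eron mapping property; being a smooth separated model of finite type over $S$, it is the N\'eron model of $X_K$.

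\textbf{Main obstacle.} The delicate point is the claim that $p\colon Y_0 \to Y$ is an isomorphism — equivalently, that a proper birational morphism onto the regular scheme $Y$ with a positive-dimensional fiber forces a rational curve into that fiber and hence (via $q$) into $X_{\bar s}$. One must be careful that $Y_0$ need not be normal or regular, so the cleanest route is probably to first replace $Y_0$ by its normalization (finite, since we may assume $S$ excellent after the reductions, or argue directly), then reduce to the local situation $\Spec \widehat{\cO}_{Y,y} \leftarrow \Spec \widehat{\cO}_{Y_0, y'}$ as in Proposition~\ref{fY-sm} and quote the structure of such contractions; alternatively one can run the Bertini-type dévissage of Lemma~\ref{bertini} to cut $Y$ down to a relative curve and reduce to the surface case, where the statement "a proper birational morphism of regular surfaces is a composition of blow-ups at closed points, whose exceptional divisors are rational curves" is classical. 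Handling the case $Y_K(K) = \emptyset$ (where $Y = Y_K$ and there is nothing to extend, as in Proposition~\ref{2nd-app-of-fY-sm}) and the non-dominant case (where $f_K(Y_K)$ is a single rational point, extended by a section landing in $X_\sm$ via Lemma~\ref{sm-sect}) should be dispatched first, exactly as in the proof of Proposition~\ref{2nd-app-of-fY-sm}.
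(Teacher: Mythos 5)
Your overall architecture coincides with the paper's: extend $f_K$ to $f\colon Y\to X$ by showing that the closure $Y_0$ of the graph of $f_K$ in $Y\times_S X$ (equivalently, of the image of the section $Y_K\to Z_K$ of $Z:=X\times_S Y\to Y$) projects isomorphically onto $Y$, and then conclude with Corollary~\ref{sm-to-sm}. The difference lies in how the extension step is justified, and that is where your proposal has a genuine gap. The paper simply invokes \cite{GLL2}, Proposition~6.2: for a proper morphism onto a regular scheme whose geometric fibers contain no rational curve, every rational section extends to a section. You instead try to prove this on the spot, and the crucial assertion --- that a positive-dimensional fiber of the proper birational morphism $p\colon Y_0\to Y$ onto the regular scheme $Y$ must contain a rational curve --- is precisely the hard content of that cited result. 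It is not a consequence of van der Waerden purity (which, for $Y_0$ normal, only says the exceptional locus is a divisor), nor of ``rigidity'', nor of Raynaud's Appendice (which concerns finite covers of two-dimensional normal local rings and is used in Proposition~\ref{fY-sm} for a different purpose). In arbitrary dimension and in mixed or positive characteristic, this statement requires Abhyankar-type ruledness of divisorial valuations centered on a regular local ring, plus an argument specializing rational curves from the generic fiber of $E\to p(E)$ down to the fiber over the given point $y$; none of this is routine.

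The proposed fallback --- cutting $Y$ down to a surface through $y$ via Lemma~\ref{bertini} and quoting the factorization of birational morphisms of regular surfaces into point blow-ups --- also does not go through as written: after cutting, what you control is the strict transform $\widetilde{Y'}\subseteq Y_0$ of the surface $Y'$, and there is no a priori reason that $\widetilde{Y'}\to Y'$ still has a positive-dimensional fiber over $y$ (if its fibers are finite it is an isomorphism near $y$ by Zariski's Main Theorem, which tells you nothing about $p$ itself). A minor point in your favor: you need not worry about $q$ being constant on a contracted curve, since $p^{-1}(y)\subseteq\{y\}\times_S X$, so $q$ restricted to any fiber of $p$ is a closed immersion, and the remark preceding the proposition lets you pass to the geometric fiber $X_{\bar s}$. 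But as it stands the central claim of your argument is asserted rather than proved; to complete it you must either cite the Gabber--Liu--Lorenzini extension result, as the paper does, or supply an actual proof that exceptional fibers over regular bases are covered by rational curves.
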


\begin{proof}
Let $Y$ be an irreducible smooth scheme of finite type over $S$ and let $f_K \colon Y_K\to X_K$ be a morphism of $K$-schemes. Consider the $Y$-scheme $Z:=X\times_S Y\to Y$. Its geometric fibers are 
$Z_{\bar{y}}=X_{s}\otimes_{k(s)} \overline{k(y)}$ and they do not 
contain any rational curve. The morphism $f_K$ induces a section $Y_K\to Z_K$,
$y\mapsto (f_K(y),y)$,  which extends to a section $Y\to Z$ by 
\cite{GLL2}, Proposition 6.2. Composing this section with the
projection $Z\to X$ gives a morphism $f\colon Y\to X$ extending $f_K$. By Proposition~\ref{sm-to-sm}, $f(Y)\subseteq X_\sm$. This proves that $X_\sm$ is the N\'eron model of $X_K$ over $S$. 
\end{proof}

\begin{remark} If $A_K$  is an abelian variety having good reduction over $S$,
then we  recover the well-known fact that a proper smooth model $A$
of $A_K$ is the N\'eron model of $A_K$ (\cite{BLR}, Proposition 1.2/8). 
\end{remark}

\end{section} 

\begin{section}{N\'eron lft-models of open curves in the local case} \label{af-nm-s}

Let $X_K$ be a separated smooth connected curve having a N\'eron
lft-model $X$ over $S$ ({\it e.g.} if $X_K$ is proper of positive genus,
see Theorem \ref{pj-nm}). Let $U_K$ be an open dense subscheme of $X_K$. 
A natural question is whether $U_K$ has a N\'eron lft-model $U$ 
and, if it exists, how it is related to $X$. In this section, 
we restrict ourselves to the case where $S=\mathrm{Spec}( R)$ is \emph{local}. 
Then we will show that the answer is positive under mild hypothesis 
and we describe explicitly the construction of $U$. 

Let $R^{\sh}$ denote the strict henselization of $R$, $\widehat{R^{\sh}}$ 
the completion of $R^{\sh}$, $K^{\sh}=\mathrm{Frac}(R^{\sh})$ and 
$\widehat{K^{\sh}}=\Frac(\widehat{R^{\sh}})$. 

\subsection{Main statement} 

\begin{theorem} \label{nm-open-st} 
Let $X_K$ be a separated smooth
connected curve over $K$ and let $U_K$ be a dense open subscheme of 
$X_K$. Suppose that $X_K$ has a smooth model $X$ over $S$ such that 
$X_{\widehat{R^{\sh}}}$ is the N\'eron lft-model of
$X_{\widehat{K^{\sh}}}$. Then $U_K$ has a N\'eron lft-model $U$ over $S$. Moreover, 
denoting by $\Delta_K=X_K\setminus U_K$ the boundary of $U_K$ in
$X_K$, 
$U$ satisfies the following properties. 
\begin{enumerate}[{\rm (1)}] 
\item The scheme $U$ is of finite type over $S$ if and only if 
$X$ is of finite type and if 
$$\Delta_K\cap X_K({\widehat{K^{\sh}}})=\emptyset.$$ 
{If $S$ is excellent, the latter condition is also equivalent to 
\[
\Delta_K\cap X_K(K^{\sh})=\emptyset;
\]} 
\item Let $\Delta$ be the
  Zariski closure of $\Delta_K$ in $X$. Then the identity on the
generic fiber $U_K$ extends to an open immersion 
$X\setminus \Delta\to U$, and the open immersion 
$U_K\to X_K$ extends to a morphism $U\to X$. 
\item Let $s$ be the closed point of $S$ and let $k(s)^{\mathrm{sep}}$ be a separable closure of {the residue field} $k(s)$ of $s$. 
Then $U=X\setminus \Delta$ if and only if 
$\Delta\cap X_s(k(s)^{\mathrm{sep}})=\emptyset$. 
  \end{enumerate}
\end{theorem}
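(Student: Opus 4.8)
The plan is to construct $U$ explicitly and reduce everything to the local-to-strictly-local-to-complete chain of base changes, exploiting the hypothesis that $X_{\widehat{R^\sh}}$ is already a N\'eron lft-model. First I would set up the candidate: let $\Delta$ be the Zariski closure of $\Delta_K$ in $X$, and for each point $\delta$ of $\Delta$ lying over the closed point $s$ with $\delta$ visible by a point of $X_s(k(s)^\sh)$ — equivalently, in the completed/strictly-henselized picture, each section of $X$ through $\Delta$ — one expects to have to add a piece. The naive guess $U = X \setminus \Delta$ fails precisely when some $\widehat{K^\sh}$-point of $X_K$ lands in $\Delta_K$, because such a point must extend to a section of the N\'eron model $U$ but has nowhere to go in $X \setminus \Delta$; so $U$ must be obtained from $X \setminus \Delta$ by gluing in, along each such problematic section, a suitable smooth chart. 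Concretely I would pass to $S' = \Spec(\widehat{R^\sh})$, where $X_{S'}$ is a N\'eron lft-model, enumerate the (finitely or countably many) sections $\sigma$ of $X_{S'}$ meeting $\Delta$, and for each such $\sigma$ build a local model by taking $X_{S'}$, blowing up / choosing an affine neighborhood of $\sigma(s)$, removing the closed fiber components not hit by $\sigma$, and translating copies by the group-like structure — more precisely, following the idea already used in \cite{BLR} for N\'eron lft-models, one takes the open subscheme of $X_{S'}$ where the section is "visible" and glues a $\mathbb Z$ or $(\text{unit group})$-indexed family of such opens. The resulting scheme over $S'$ is smooth, separated, flat, and one checks the N\'eron mapping property directly using that $Y_{S'}$-points of $X_{S'}$ landing in $\Delta_K$ on the generic fibre must, by separatedness and smoothness of $Y$, extend through exactly one of the glued charts (here Corollary~\ref{fY-reg}(1) and Proposition~\ref{fY-sm} control what happens when the special fibre of $Y$ is contracted).

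Next I would descend back down. The formation of $U$ over $S'$ should be Galois-equivariant for $\Gal(k(s)^\sh/k(s))$ acting on $S'$, so by \'etale/fpqc descent (Proposition~\ref{nm-gen}(1) together with the standard spreading-out over the strict henselization) it descends first to $R^\sh$ and then, using that $R \to R^\sh$ is a filtered limit of \'etale neighborhoods, to an $R$-model; one then invokes Proposition~\ref{nm-gen} in the other direction to conclude that this $R$-scheme is the N\'eron lft-model of $U_K$ over $S$. This is also where property (2) falls out essentially by construction: the locus of $U$ where none of the extra charts have been glued is exactly $X \setminus \Delta$, giving the open immersion $X \setminus \Delta \hookrightarrow U$; and the morphism $U \to X$ is the one contracting all glued charts back onto the sections they were built from, which exists because each chart maps to a neighborhood of $\sigma(s)$ in $X$ — the N\'eron mapping property for $X$ applied to the smooth scheme $U$ gives this map, since $U_K = X_K$ is just an open of... wait, rather one uses that $U \to X$ extending $U_K \hookrightarrow X_K$ exists because $X_{S'}$ is a N\'eron lft-model and $U_{S'}$ is smooth, then descends.

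For the finiteness criteria (1) and (3): property (3) is the clean statement that $U = X \setminus \Delta$ exactly when there are no sections of $X_{S'}$ through $\Delta$, i.e. $\Delta \cap X_s(k(s)^\sep) = \emptyset$ — one direction is the construction (no bad sections means nothing to glue), the other is that a point of $\Delta$ over a separable point of $X_s$ lifts to a section by smoothness of $X$ at that point (Lemma~\ref{sm-sect}, Hensel) and that section forces $U \ne X \setminus \Delta$. For (1), $U$ is of finite type iff $X$ is and only finitely many charts are glued, i.e. $\Delta_K \cap X_K(\widehat{K^\sh}) = \emptyset$; the excellent case, replacing $\widehat{K^\sh}$ by $K^\sh$, uses Proposition~\ref{nm-gen}(3) (regularity of $R^\sh \to \widehat{R^\sh}$, approximation) so that a $\widehat{K^\sh}$-point landing in $\Delta$ can be approximated by a $K^\sh$-point landing in $\Delta$ since $\Delta$ is closed, and vice versa. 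The main obstacle I anticipate is making the gluing construction over $S'$ genuinely work and verifying its N\'eron mapping property in the locally-of-finite-type (possibly infinite) setting — controlling that a smooth $Y \to S'$ with irreducible special fibre whose generic map hits $\Delta_K$ really does factor through one and only one glued chart, which is where one needs Corollary~\ref{fY-reg}(1) to handle the case where $Y_s$ is contracted to a point of $\Delta_s$ and the explicit structure of the charts to handle the non-contracted case; everything else is descent and spreading-out bookkeeping.
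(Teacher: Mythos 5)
Your overall shape is right (work over the complete strictly henselized base, enlarge $X\setminus\Delta$ along the locus where $\Delta$ meets separable smooth points of the special fibre, and read off (1)--(3) from the construction), but two steps of the plan as written would not go through. First, the construction itself: you describe $U$ as $X\setminus\Delta$ with charts glued in along problematic sections, indexed by $\mathbb Z$ or a unit group and obtained by ``translating copies by the group-like structure''. A general dense open $U_K$ of a general curve $X_K$ carries no group structure, so there is nothing to translate; the BLR picture you invoke is the \emph{output} of the construction for $\mathbb G_m$, not a construction. What the paper does is purely geometric and iterative: set $X_0=X$, let $\Delta_n$ be the closure of $\Delta_K$ in $X_n$ and $\Delta_n'=\Delta_n\cap\sm(X_n)_s(k^{\sep})$, blow up $X_n$ along $\Delta_n'$ to get $X_{n+1}$, and put $U=\bigcup_n\bigl(\sm(X_n)\setminus\Delta_n\bigr)$. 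One blow-up never suffices -- the strict transform of $\Delta$ re-enters the separable smooth locus of the new special fibre -- and the whole analysis hinges on Lemma~\ref{dilat}: two points of $X_K$ specializing to the same point of $\Delta_n'$ for every $n$ must coincide and be rational. That lemma is proved by embedded resolution of singularities of the closure of a point of $\Delta_K$ (whence the need for an excellent base, i.e.\ $\widehat{R^{\sh}}$), and nothing in your sketch plays its role. Your treatment of the mapping property is also inverted: the delicate case is a morphism $f_K\colon Y_K\to U_K$ (so \emph{avoiding} $\Delta_K$) whose extensions $f_n\colon Y\to X_n$ contract $Y_s$ into $\Delta_n'$ for every $n$; Corollary~\ref{fY-reg}(1) lets one lift through each blow-up, and Lemma~\ref{af-pre}(b), resting on Lemma~\ref{dilat}, shows this cannot persist without $f_K(Y_K)$ meeting $\Delta_K$ -- a contradiction. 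This termination argument is the missing idea.

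Second, the descent. You build the model over $\Spec(\widehat{R^{\sh}})$ and propose to descend it to $R$ by Galois equivariance and fpqc descent. But $R^{\sh}\to\widehat{R^{\sh}}$ has no Galois group, and effective descent of a scheme that is only locally of finite type (and not quasi-projective in general) along a faithfully flat map is not automatic; this step is a genuine obstruction, not bookkeeping. The paper avoids it entirely: the blow-up construction is carried out over $R$ itself, its formation commutes with the flat base changes $R\to R^{\sh}\to\widehat{R^{\sh}}$ (Zariski closures, blow-ups and smooth loci all do), the N\'eron mapping property is \emph{checked} over $\widehat{R^{\sh}}$, and Proposition~\ref{nm-gen}(1) transports it back down. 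Your statements and arguments for (1)--(3) are essentially correct once the construction is in place, including the use of excellence to pass from $\widehat{K^{\sh}}$-points to $K^{\sh}$-points in (1).
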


\begin{remark}\label{complement-nm-open-st} 
In the statement of Theorem~\ref{nm-open-st}, the $S$-model $X$ is
necessarily the N\'eron lft-model of $X_K$ over $S$
(Proposition~\ref{nm-gen}(1)), but the requirement
in Theorem~\ref{nm-open-st} is slightly stronger than this, 
except when $S$ is excellent (Proposition~\ref{nm-gen}~(3)).
\end{remark}

\begin{subsection}{Construction of \texorpdfstring{$U$}.} \label{construct-U} 

Let $k=k(s)$. First we construct a (possibly infinite) sequence
of blow-ups of $X$, then define $U$ as a suitable open subset of the
resulting scheme. Note that the following construction 
can be done for any smooth $S$-scheme $X$ such that $U_K$ is a dense
open subset of $X_K$, and Lemmas~\ref{dilat} and \ref{af-pre} hold
just under these assumptions. 

Put $X_0:=X$ and  $\Delta_0:=\Delta$. 
\begin{enumerate}[{(i)}] 
\item Let $\Delta_0'=(\Delta_0)_s\cap \sm(X_0)_s(k^{\mathrm{sep}})$, {\it i.e.}, the subset of points of $(\Delta_0)_s \cap \sm(X_0)_s$ with separable residue field over $k$. If $\Delta'_0=\emptyset$, we stop. 
\item Otherwise, let $X_1\to X_{{0}}$ be the blowing-up of $X_{{0}}$ along 
$\Delta'_0 $ (endowed with the reduced structure). 
Let $\Delta_1$ be the Zariski closure 
of $\Delta_K$ in $X_1$ and
let $\Delta_1'=\Delta_1\cap \sm(X_1)_s(k^{\mathrm{sep}})$. If
$\Delta'_1=\emptyset$, we stop. 
\item Otherwise {blow up} 
$X_2\to X_1$ along $\Delta'_1$ (reduced) and start again with
the Zariski closure $\Delta_2$ of $\Delta_K$ in $X_2$. We construct in this way
a (possibly infinite) sequence of models {locally} of finite type 
$X_n$ of $X_K$, with $\Delta_n\subset X_n$ the Zariski
closure of $\Delta_K$ in $X_n$, and $X_{n+1}\to X_n$ is the
blowing-up of $\Delta_n':=\Delta_n\cap \sm(X_n)_s(k^{\mathrm{sep}})$. 
\end{enumerate}

Let $U_n=\sm(X_n)\setminus \Delta_n$. 
The identity map on $U_K$ extends to an open immersion $U_n\to
U_{n+1}$. More precisely $U_n$ is $U_{n+1}$ minus the 
the exceptional divisor of $X_{n+1}\to X_n$. 
Let $U=\bigcup_{n\ge 0} U_n$. This is a smooth, separated scheme locally of 
finite type over $S$, with generic fiber isomorphic to $U_K$. 
We will show that $U$ is the N\'eron lft-model of $U_K$ over $S$. 
By construction, we have canonical morphisms 
\begin{equation} \label{X-U} 
X\setminus \Delta \hookrightarrow  U\to X,
\end{equation}
and the second morphism has image in 
$(X\setminus  \Delta)\cup {(\Delta_0')_{s}}$.
\end{subsection}

{Note that the formation of $U$ commutes with 
any flat extension of discrete valuation rings $R\to R'$ because
taking Zariski closure, 
blowing-up and taking the smooth locus 
are all compatible with such an extension. 
To prove $U$ is the N\'eron lft-model, we can replace $R$ by $\widehat{R^{\sh}}$ and suppose $R$ is strictly henselian and excellent (Proposition~\ref{nm-gen}). After this reduction, $X$ is the N\'eron lft-model of $X_K$ in the situation of Theorem~\ref{nm-open-st}.}
But in general the curve $U_K$ might be no longer connected
(the curve $X_K$ and its N\'eron lft-model decompose accordingly). In
this situation, if we can deal with each connected component of $U_K$
(which is an open subscheme of a connected component of $X_K$), the
general case follows (Remark~\ref{univ}~(3)). Hence, we may assume that $U_K$ is still connected.

\begin{subsection}{Dilatation} \label{dilat-df} (\cite{BLR}, \S 3.2). 
Let $R$ be a discrete valuation ring. Let 
$X$ be any flat $R$-scheme of finite type. Let $E$ be a closed subscheme 
of the special fiber $X_s$ defined by a sheaf of ideals $\mathcal
I\subset \cO_X$. Recall that the \emph{dilatation of $E$ on $X$} 
is obtained by blowing-up $u: \widetilde{X}\to X$ along $E$, and 
then taking the open subset of $\widetilde{X}$ where $u^*\mathcal I$ is 
generated by a uniformizing element of $R$. Denote by $X'$ the dilatation of $E$. It satisfies the following universal 
property: for any flat $R$-scheme $Z$, a morphism $Z\to X$ 
factors through $X'\to X$ if and only if $Z_s\to X_s$ factors 
through $E\to X_s$ as morphism.  

{If $X/R$ is smooth and if the center $E$ is smooth over $k$, by a local computation, one can show that the dilatation $X'$ of $E$ on $X$ is smooth over $R$, and is equal to} the
complement in $\widetilde{X}$ of the strict transform of $X_s$. 
\end{subsection}

\begin{subsection}{Some technical lemmas} 

\begin{lemma} \label{dilat} Keep the notation of \S \ref{construct-U}
and assume $R$ is strictly hense\-lian and excellent.  
Suppose that the sequence of blowing-ups 
$X_{n+1}\to X_n$ is infinite. Let $p_{1}, p_{2}\in X_K$ be closed points such that for all 
$n\in \mathbb N$, they specialize to a same point $x_n\in \Delta_n'$. Then 
$p_{i}\in X_K(K)$ and $p_{1}=p_{2}$. 
\end{lemma}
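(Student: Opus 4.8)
The idea is to track, through each blow-up $X_{n+1}\to X_n$, how the two points $p_1,p_2$ sit with respect to the exceptional divisor over $x_n$, and to extract arithmetic information from the fact that this process never terminates. Since $R$ is strictly henselian and excellent, $X$ is the N\'eron lft-model of $X_K$, and since $X/R$ is smooth, each center $\Delta_n'$ is a reduced finite set of $k$-rational points lying in $\sm(X_n)_s$ (the residue field condition is automatic over a separably closed $k$). Hence each $X_{n+1}\to X_n$ is, locally at $x_n$, the blow-up of a smooth $R$-scheme along a smooth (indeed, a single $k$-point) center of codimension $2$, so $X_{n+1}$ is again smooth over $R$ near the exceptional fiber, and the exceptional fiber over $x_n$ is a $\mathbb P^1_k$ meeting the strict transform of $(X_n)_s$ in one point.

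\textbf{Key steps.} First I would reduce to a purely local statement: choose local coordinates so that $\widehat{\cO}_{X_n,x_n}\cong R[[t,v]]$ with $(X_n)_s=\{v=0\}$ (using smoothness of $X_n/R$ and of $(X_n)_s$ near $x_n$), where $\pi$ is a uniformizer of $R$; then $\Delta_n$, being the closure of the finitely many points $p_i$ which are in the generic fiber, is contained in $V(\pi)$ only at the level of its special fiber — more precisely the two points $p_i$ reduce to $x_n=(t,v)=(0,0)$. The blow-up of $x_n$ (which is $V(\pi,t,v)$... no: $x_n$ is a point of the \emph{special} fiber, so its ideal is generated by two of the three, say by the maximal ideal of $\cO_{(X_n)_s,x_n}$ lifted, i.e.\ essentially blowing up the reduced point) followed by removing the strict transform of the special fiber is exactly a \emph{dilatation} in the sense of \S\ref{dilat-df}. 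So each step $X_{n+1}\to X_n$ agrees, near $x_n$ and $x_{n+1}$, with the dilatation of the reduced point $x_n$ on the smooth $R$-scheme $X_n$. Then I would use the universal property of dilatation from \S\ref{dilat-df}: a section $\Spec R\to X_n$ through $x_n$ lifts canonically to $X_{n+1}$. Next, since $p_1,p_2\in X_K$ specialize to $x_n$ for \emph{all} $n$, and $\Delta_n$ is the Zariski closure of $\Delta_K$, the point $p_i$ of $X_K$ has a specialization in $X_n$ for every $n$; the compatibility of the dilatations forces, by an infinite-descent / valuation-theoretic argument, that the local ring $\cO_{X_K,p_i}$ admits a morphism from $R$ extending the given point — i.e.\ $p_i\in X_K(K)$. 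Concretely: if $p_i$ had residue field $K(p_i)$ strictly larger than $K$, take a DVR $R'$ in $K(p_i)$ dominating $R$; its ramification index $e>1$ or its residue field is a nontrivial extension, and in either case a coordinate computation shows the strict transforms separate after finitely many dilatations (the "distance" of $p_i$ to the center drops by the ramification each step), contradicting that $x_n\in\Delta_n'$ for all $n$. The same computation, comparing $p_1$ and $p_2$, shows that if $p_1\neq p_2$ as points of $X_K(K)$ then the two corresponding sections $\Spec R\to X_n$ are distinct and, being sections of a smooth scheme hence disjoint after a blow-up of their (distinct, since $R$ strictly henselian) reduction points — but they reduce to the same $x_n$ — a direct local computation in $R[[t,v]]$ shows they get separated after at most $\ord$ of the difference of their coordinates many steps. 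Hence $p_1=p_2$.

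\textbf{Main obstacle.} The delicate point is making the "distance decreases" argument precise and uniform: one must set up the right local model at each $x_n$ (smooth over $R$, special fiber smooth, center a single reduced closed point) and verify that the dilatation in those coordinates is literally $R[[t,v]]\to R[[t',v]]$ with $t=\pi t'$ (or the symmetric chart), so that the $\pi$-adic valuation of the coordinate of $p_i$ relative to $x_n$ strictly decreases at each step unless $p_i$ is already a $K$-point reducing \emph{transversally}; and to check that the exceptional-divisor bookkeeping (which chart of the blow-up contains $x_{n+1}$, why $x_{n+1}$ again lies in $\sm(X_{n+1})_s$ with the right codimension so that the next step is again a dilatation of a point) is consistent. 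Once this local picture is nailed down, the finiteness of the $\pi$-adic valuations of the relevant differences gives both conclusions $p_i\in X_K(K)$ and $p_1=p_2$ by contradiction with the infiniteness of the sequence.
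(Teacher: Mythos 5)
Your overall strategy --- follow the specialization of $p_i$ through the successive dilatations in local coordinates and derive a contradiction from the infinitude of the sequence --- is a reasonable one, but as written it has a genuine gap at the decisive step, and a garbled local model. First, the minor point: $X_n$ is a smooth \emph{relative curve} over $R$, so at a $k$-rational point $x_n$ of $\sm(X_n)_s$ the completed local ring is $\widehat{R}[[t]]$ with special fibre $V(\pi)$; your $R[[t,v]]$ with $(X_n)_s=\{v=0\}$ has the wrong dimension and misidentifies the special fibre (your own hesitation ``which is $V(\pi,t,v)$\dots no'' signals this). The substantive gap is in the claimed mechanism ``the distance of $p_i$ to the center drops by the ramification each step, so the strict transforms separate after finitely many dilatations.'' Set $L=K(p)$, let $R'$ be the integral closure of $R$ in $L$ (a DVR, as $R$ is henselian), and let $\alpha_0\in\m_{R'}$ be the image of the coordinate $t$. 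Tracking the dilatation $t=\pi t'$ shows only that the hypothesis ``$x_n\in\Delta_n'$ for all $n$'' forces $\alpha_0$ to be a $\pi$-adic limit of elements of $R$, i.e.\ $\alpha_0\in\widehat{R}\cap L$; the valuation of $\alpha_n$ does \emph{not} monotonically drop (each time the reduction of $\alpha_n/\pi$ is a nonzero element of $k$ one recenters and the valuation can jump up arbitrarily). To get a contradiction you must rule out $\alpha_0\in(\widehat{R}\cap L)\setminus R$, i.e.\ you need that $K$ is algebraically closed in $\widehat{K}$ --- and this is exactly where both henselianity (Krasner, for the separable part) and \emph{excellence} (for the purely inseparable part) enter. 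Your proposal never invokes this, and without it the argument does not close: for a non-excellent henselian $R$ there are purely inseparable $\alpha\in\widehat{R}\setminus R$, the process runs forever, and the conclusion of the lemma fails (compare the counterexample at the end of \S 6 of the paper, which is built on precisely such an element). A secondary point you should also make explicit: even granting $\alpha_0\in R$, concluding $p\in X_K(K)$ requires observing that the local map $\cO^{h}_{X_0,x_0}\to R'$ is determined by the image of $t$ (universal property of henselization), so that it factors through $R$; knowing one coordinate is rational does not by itself determine the residue field of $p$.

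For contrast, the paper's proof avoids all valuation bookkeeping: it applies embedded resolution of singularities (this is where excellence is used) to the closure $P_n$ of $p$ in $X_n$, so that after finitely many steps $P_m=\Spec R'$ is regular and $P_{m+1}\to P_m$ is an isomorphism; then the universal property of the dilatation forces the closed immersion $P_{m,s}\to X_{m,s}$ to factor through $\Spec k(x_m)=\Spec k$, whence $R'/\pi R'=k$ and, $R'$ being finite flat over $R$, $R'=R$. The same resolution argument applied to $\overline{\{p_1,p_2\}}$ separates the two sections after finitely many blow-ups, giving $p_1=p_2$. If you want to salvage your route, the fix is to state and use the lemma ``$R$ excellent henselian $\Rightarrow K$ is algebraically closed in $\widehat{K}$'' at the point where your infinite approximation terminates.
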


\begin{proof}
Let us first prove $p_{i}\in X_K(K)$. Let $p\in \{p_{1}, p_{2}\}$. 
Let $X'_{n+1}\subseteq X_{n+1}\to X_n$ be the {dilatation} of $\Delta_n'$ on
$X_n$ and let $P_n=\{p, x_n\}$ be the reduced Zariski closure of $p$ in 
$X_n$. By the construction of $X_n$, $x_{n+1}$ maps to $x_n\in \Delta_n'$
and it is a smooth point of $X_{n+1}$. 
So $x_{n+1}\in X'_{n+1}$ and 
for all $n\ge 0$, we have a commutative diagram
$$
\xymatrix
{ P_{n+1} \ar[r] \ar[d] & X'_{n+1}\subset X_{n+1} \ar[d]\\ 
P_n \ar[r] & X_n \\ 
} 
$$
The first vertical arrow is birational and finite. Moreover, 
$P_{n+1}$ is nothing but the strict transform of $P_n$ in 
$X_{n+1}$. By the embedded resolution of singularities 
(see {\it e.g.}, \cite{Liu}, 9.2.26, recall that $S$ is excellent), there exists $m\ge 1$
such that $P_m$ is a regular scheme. Then 
$P_{m+1}\to P_m$ is an isomorphism and we have a factorization 
$$
\xymatrix
{ {} & X'_{m+1} \ar[d]\\ 
P_m \ar[r]^{\sigma_m} \ar[ur]& X_m. \\ 
} 
$$
By the universal property of the dilatation (\S \ref{dilat-df}), 
the closed immersion $\sigma_{m,{s}} \colon P_{m,{s}}\to X_{m,{s}}$ 
factors through $\Spec k(x_m)\subset X_{m,{s}}$. 
{Hence $P_{m,s}\to \Spec k(x_m)=\Spec k$ is an isomorphism.} 
Therefore $P_m\to S$ is an isomorphism and we have $K(p)=K$. 

If $p_{1}\ne p_{2}$, by the embedded 
resolution of singularities of the Zariski closure of 
$\{ p_{1}, p_{2}\}$ in $X_0$, the Zariski closure becomes  
a disjoint union of sections in some $X_n$. 
Contradiction with the hypothesis in the lemma. 
\end{proof}

\begin{lemma} \label{af-pre} Keep the notation of \S \ref{construct-U}
and suppose $R$ is strictly hense\-lian and excellent.  
\begin{enumerate}[{\rm (a)}]
\item If $(X_n)_n$ is an infinite sequence, and if 
$(x_n)_n$ is such that $x_n\in \Delta'_n$ and $x_{n+1}\mapsto x_n$
by $X_{n+1}\to X_n$ for all $n\ge 0$, then there exists $p\in \Delta_K$ such that 
$x_n\in P_n$ (Zariski closure of $p$ in $X_n$) for all $n\ge 0$. 
\item Let $Y$ be a flat integral $S$-scheme of finite type with irreducible closed fiber $Y_s$. 
Let $f_K : Y_K \to X_K$ be a morphism. Suppose that for all 
$n\ge 1$, $f_K$ extends to $f_n : Y\to X_n$ and that 
$f_n(Y_s)\subseteq \Delta_n'$. Then 
$f_K(Y_K)\cap \Delta_K\ne\emptyset$. 
\item \label{af-ft} The scheme $U$ is of finite type over $S$ if and only if 
$X$ is of finite type and $\Delta_K\cap X_K(K)=\emptyset$. 
\end{enumerate}
\end{lemma}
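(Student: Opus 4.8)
The plan is to prove the three items in the order (a), (b), (c), since (a) is the key combinatorial input, (b) is a relative version of (a) applied to the image of a smooth scheme, and (c) packages them into the finiteness criterion.

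For part (a), I would argue that the infinite sequence of blow-ups forces the points $x_n$ to ``track'' a point of $X_K$ lying over the boundary. Fix the tower $(x_n)_n$ with $x_{n+1}\mapsto x_n$. Since each $x_n$ lies on $\Delta_n$, which is the Zariski closure of $\Delta_K$, the local ring $\cO_{X_n, x_n}$ dominates (via the generic fiber) the semilocal ring of $\Delta_K$ at the finitely many points of $\Delta_K$ specializing into $x_n$; as $n$ grows this set of candidate points of $\Delta_K$ is nested and nonempty (it is a closed subset of the finite scheme $\Delta_K$), so by finiteness it stabilizes to a single point $p\in\Delta_K$ for $n\gg 0$ — and then $x_n$ lies on $P_n=\overline{\{p\}}$ for all large $n$, hence (pulling back along the birational blow-up maps, which only modify things over the center) for all $n\ge 0$. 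The one point to be careful about is that the center $\Delta_n'$ is a finite union of \emph{separable-residue-field} closed points of $(X_n)_s$, so after passing to $\widehat{R^{\sh}}$ the strict transform of $\overline{\{p\}}$ does get blown up exactly when $p$ specializes into such a point; this is why the conclusion $x_n\in P_n$ is consistent with $p\in\Delta_K$ rather than $p$ escaping the boundary. I would also invoke embedded resolution (\cite{Liu}, 9.2.26, using $R$ excellent) to know the tower $P_0\to P_1\to\cdots$ eventually becomes an isomorphism, so the ``tracking'' is well-defined in the limit — this is the same device used in Lemma~\ref{dilat}.

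For part (b): suppose for contradiction that $f_K(Y_K)\cap\Delta_K=\emptyset$, i.e. $f_K$ factors through $U_K$. Each $f_n\colon Y\to X_n$ has $f_n(Y_s)\subseteq\Delta_n'\subseteq \sm(X_n)_s$, and $Y_s$ is irreducible, so $f_n(Y_s)$ is a single point $x_n\in\Delta_n'$ (a point with separable residue field, reached by a morphism from the \emph{irreducible} $Y_s$); by the compatibility $f_{n+1}$ lifts $f_n$, these $x_n$ form a tower as in (a). By (a) there is $p\in\Delta_K$ with $x_n\in P_n$ for all $n$. But now $f_n$ sends the generic fiber $Y_K$ into $X_K\setminus\Delta_K=U_K$ while sending $Y_s$ to a point lying on $\overline{\{p\}}$ with $p\in\Delta_K$; using the dilatation description of the tower (each $X_{n+1}\to X_n$, restricted near $x_n$, is the dilatation of $\Delta_n'$, cf.\ \S\ref{dilat-df}) together with the universal property of dilatations applied to the flat $S$-scheme $Y$, one sees that $f_K$ must in fact meet $\Delta_K$: the point $p$ is forced to lie in $f_K(Y_K)$ because $\overline{\{p\}}$ and $f_n(Y)$ share the closed point $x_n$ for all $n$ and, after resolving $P_n$, the section $P_m\to S$ and the image of $Y$ become ``tangent to infinite order'', forcing $p\in f_K(Y_K)\cap\Delta_K$. (This is the relative analogue of the argument in Lemma~\ref{dilat} that produced $K(p)=K$.) This contradiction proves (b).

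For part (c): the scheme $U=\bigcup_n U_n$ is of finite type over $S$ iff the tower stabilizes, i.e. $\Delta_N'=\emptyset$ for some $N$, \emph{and} $X=X_0$ itself is of finite type (since $U\supseteq X\setminus\Delta$ via \eqref{X-U}, and conversely each $U_n$ is finite type over $S$ once $X$ is). First, if $X$ is not of finite type then neither is $U$ (the open immersion $X\setminus\Delta\to U$ shows $U$ contains a non-quasicompact open). Assuming $X$ finite type: if the tower is infinite, part (a) produces a point $p\in\Delta_K$ with $x_n\in P_n$ for all $n$, and by Lemma~\ref{dilat} (applied with $p_1=p_2=p$) we get $K(p)=K$, so $\Delta_K\cap X_K(K)\ne\emptyset$. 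Conversely, if $p\in\Delta_K\cap X_K(K)$, its Zariski closure is a section meeting $X_s$ at a $k$-rational (hence separable-residue) smooth point — smooth because $R$ is strictly henselian so the section lands in the smooth locus by Lemma~\ref{sm-sect} — so this point lies in $\Delta_n'$ at every stage and the tower never stops; hence $U$ is not of finite type. Thus $U$ is of finite type iff $X$ is of finite type and $\Delta_K\cap X_K(K)=\emptyset$, as claimed.

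The main obstacle I expect is part (a)/(b): making precise the ``infinite-order tangency'' argument that a tower $(x_n)$ of centers must be tracked by an honest point $p$ of $\Delta_K$, and then that in the relative situation of (b) this $p$ is actually in the image $f_K(Y_K)$. The right tool is the universal property of successive dilatations (\S\ref{dilat-df}) combined with embedded resolution of the relevant closures (\cite{Liu}, 9.2.26), exactly as in the proof of Lemma~\ref{dilat}; the bookkeeping of which strict transforms survive each blow-up — and the role of the separability condition defining $\Delta_n'$ — is where care is needed.
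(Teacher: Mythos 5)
Parts (a) and (c) of your proposal are essentially the paper's own argument. For (a), the paper simply observes that $F_n:=\{p\in\Delta_K : p \text{ specializes to } x_n\}$ is a decreasing sequence of non-empty finite sets, hence has non-empty intersection; this is exactly your ``nested and nonempty'' observation, and the extra material about embedded resolution and separability is not needed here (it belongs to Lemma~\ref{dilat}, which is already proved). Your claim that the sets stabilize to a \emph{single} point is an over-claim at this stage, but harmless since any point of the intersection works. For (c), your argument agrees with the paper's; the only step you leave implicit is why an infinite tower forces $U_n\subsetneq U_{n+1}$, namely that $U_{n+1}$ contains a dense open subset of the exceptional divisor of $X_{n+1}\to X_n$, which is disjoint from $U_n$.

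Part (b) is where you have a genuine gap. You correctly reduce to a tower $x_n=f_n(Y_s)\in\Delta_n'$ and produce $p\in\Delta_K$ with $x_n\in P_n$ for all $n$ via (a), but you never exhibit an actual point of $Y_K$ whose image is $p$; the ``infinite-order tangency'' of $\overline{\{p\}}$ and $f_n(Y)$ does not by itself put $p$ in $f_K(Y_K)$, and no ``relative analogue'' of Lemma~\ref{dilat} is required. The paper closes the argument in one line: choose a closed point $q\in Y_K$ specializing to a closed point of $Y_s$ (such $q$ exists since $Y$ is flat of finite type over $S$ with $Y_s\neq\emptyset$). Then $f_K(q)$ and $p$ are two closed points of $X_K$ having the \emph{same} specialization $x_n\in\Delta_n'$ in every $X_n$, so the already-proved Lemma~\ref{dilat} applied to the pair $p_1=p$, $p_2=f_K(q)$ gives $f_K(q)=p\in\Delta_K\cap f_K(Y_K)$. (In particular the proof is direct, not by contradiction.) With this substitution your outline becomes the paper's proof.
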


\begin{proof} (a) Let $F_n\subseteq \Delta_K$ be the set of points
specializing to $x_n$. Then $(F_n)_n$ is a decreasing sequence of 
non-empty finite sets, so $\cap_n F_n\ne\emptyset$ and any 
point $p$ in the intersection satisfies the required property. 

(b) As $Y_s$ is irreducible, $f_n(Y_s)$ is reduced to one point $x_n$
and $x_{n+1}\mapsto x_n$ by $X_{n+1}\to X_n$.  
Let $p\in \Delta_K$ be such that $x_n\in P_n$ for all $n\ge 0$. 
Let $q\in Y_K$ be a lifting of some {closed}  
point of $Y_s$. Then $f_K(q)$ and $p$ are closed points of $X_K$ having the same specialization $x_n\in \Delta'_n$ for all $n$. By Lemma~\ref{dilat}, $p=f_K(q)\in \Delta_K\cap f_K(Y_K)$. 

(c) Suppose there exists $p\in \Delta_K\cap X_K(K)$. For any $n\ge 0$ such
that $X_n$ is constructed, $p$ specializes to a point of 
$\Delta_n\cap \sm(X_n)_s(k)$, so $X_{n+1}$ exists in the 
construction of \S~\ref{construct-U}. As $X_{n+1}\to X_n$ consists in
blowing-up some smooth points, $U_{n+1}$ contains a dense open
subset of the exceptional locus of $X_{n+1}\to X_n$. So 
$U_n\subsetneq U_{n+1}$ and $U$ is not of finite type. 
{On the other hand, if $X$ is not of finite type,
then $U_1$, thus $U$, is not of finite type.}

Conversely, if $\Delta_K$ does not contain rational point of 
$X_K$, there exists $m\ge 1$ such that no point of $\Delta_K$ 
specializes to a point of $\sm(X_m)_s(k)$ (Lemma~\ref{dilat}), and the 
construction of \S~\ref{construct-U} stops at this step. Thus
$U=U_m$ is of finite type over $S$ if $X$ is of finite type. 
\end{proof}

\begin{lemma} \label{fd-surj} {Let $S$ be a locally noetherian scheme, and }
let $f: Z\to T$ be a morphism of locally noetherian flat $S$-schemes. 
Let $z_0\in Z_s$ and $t_0=f(z_0)$. Suppose 
$Z_s\to T_s$ is flat at $z_0$. Then the canonical morphism 
$$f_{z_0} : \Spec \cO_{Z,z_0} \to \Spec \cO_{T, t_0}$$ 
is surjective. In particular, for any $P\in T$ such that 
$t_0\in \overline{\{ P\}}$, there exists $Q\in f^{-1}(P)$ such that
    $z_0\in \overline{\{ Q\}}$. 
  \end{lemma}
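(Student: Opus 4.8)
The plan is to prove Lemma~\ref{fd-surj} by reducing it to a purely local statement about flat local homomorphisms of noetherian local rings, and then to deduce the ``in particular'' clause by a standard generic-point argument. Write $A = \cO_{S,s}$ (or just note that everything is happening over the base point $s$), $B = \cO_{T,t_0}$, $C = \cO_{Z,z_0}$, so that $f_{z_0}$ corresponds to a local homomorphism $\varphi\colon B\to C$. The hypotheses are: $B$ and $C$ are flat over $A$ (equivalently, flat over $\cO_{S,s}$ via the structure maps), and the induced map on special fibers $B/\m_s B \to C/\m_s C$ is flat at the relevant point, i.e.\ $B\otimes_A k(s) \to C\otimes_A k(s)$ is flat as a local homomorphism. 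The goal is that $\Spec C\to \Spec B$ is surjective.

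First I would recall the local criterion for flatness / the ``fibrewise flatness'' criterion: if $A$ is a noetherian local ring with uniformizer-type element $\pi$ generating $\m_s$ (here $A$ is a DVR, or at any rate a regular local ring of dimension one since $S$ is a Dedekind scheme — actually for the lemma $S$ is only assumed locally noetherian, but flatness over $S$ at a point still means flatness over $\cO_{S,s}$), $B$ is $A$-flat, $C$ is $A$-flat, and $C\otimes_A k(s)$ is flat over $B\otimes_A k(s)$, then $C$ is flat over $B$. This is \cite{EGA}, IV.11.3.10 (the ``critère de platitude par fibres''), or \cite{Mat}, Theorem~49/20.G in Matsumura. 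Granting that $\varphi\colon B\to C$ is flat, it is in particular faithfully flat because it is a \emph{local} homomorphism of local rings (a flat local homomorphism is automatically faithfully flat, since $\m_B$ goes into $\m_C\neq C$). A faithfully flat morphism of affine schemes is surjective, so $\Spec C\to \Spec B$ is surjective, which is the first assertion.

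For the ``in particular'' clause: let $P\in T$ with $t_0\in\overline{\{P\}}$. Then $P$ corresponds to a prime ideal $\mathfrak{p}\subseteq \m_{t_0} = \m_B$ of $B = \cO_{T,t_0}$. By the surjectivity of $\Spec C\to\Spec B$ just established, there is a prime $\mathfrak{q}$ of $C = \cO_{Z,z_0}$ lying over $\mathfrak{p}$; let $Q\in Z$ be the corresponding point of $Z$. Then $f(Q) = P$ (compatibility of the scheme-theoretic map with the local one), and since $\mathfrak{q}\subseteq\m_C = \m_{z_0}$, the point $z_0$ lies in the closure $\overline{\{Q\}}$. This is exactly the desired conclusion.

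The only real point requiring care — and the step I expect to be the main obstacle — is verifying the hypotheses of the fibrewise flatness criterion are met in the generality stated, in particular that ``$Z_s\to T_s$ flat at $z_0$'' together with ``$Z,T$ flat over $S$'' really does give the flatness of $B\otimes_A k(s)\to C\otimes_A k(s)$ as \emph{local} rings (one must localize correctly and check that the fibre ring $C\otimes_A k(s)$ localized at $z_0$ is $\cO_{Z_s,z_0}$, which it is), and that the base $A = \cO_{S,s}$, being a noetherian local ring of a regular one-dimensional scheme, is in the scope of IV.11.3.10 (it is: only noetherianness of $A$ is needed there, plus $A$-flatness of $B$ and $C$, which we have by hypothesis). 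Once those bookkeeping matters are settled, everything else is the formal chain \emph{fibrewise flat $+$ flat over base $\Rightarrow$ flat $\Rightarrow$ faithfully flat (being local) $\Rightarrow$ surjective on $\Spec$}, followed by the elementary lifting of the prime $\mathfrak p$.
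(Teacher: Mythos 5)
Your proof is correct and follows exactly the paper's own argument: the fibrewise flatness criterion (EGA IV.11.3.10) gives flatness of $f$ at $z_0$, a flat local homomorphism is faithfully flat hence surjective on $\Spec$, and the ``in particular'' clause follows by interpreting points of $\Spec\cO_{T,t_0}$ and $\Spec\cO_{Z,z_0}$ as generizations of $t_0$ and $z_0$.
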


 \begin{proof}
By the fiberwise flatness criterion (\cite{EGA}, IV.11.3.10.1), 
$f$ is flat at $z_0$ and $f_{z_0}$ is a flat morphism of local
schemes, hence faithfully flat. In particular, $f_{z_0}$ is
surjective.
The last assertion results from the usual interpretation of  
the images of $\Spec\cO_{T,t_0}$ and $\Spec\cO_{Z,z_0}$ in $T$ and
$Z$ respectively.
  \end{proof}
\end{subsection}

\begin{subsection}{Proof of Theorem~\ref{nm-open-st}}\label{proof-nm-open-st} 
Now we prove that $U$ is the N\'eron lft-model of $U_K$ over $S$. 
As noticed previously in \S \ref{construct-U}, we can suppose 
$S$ is strictly local {and excellent}. In particular $k^{\mathrm{sep}}=k$. 

Let $Y$ be a smooth scheme over $S$ and let $f_K : Y_K \to U_K$ be a 
morphism of $K$-schemes. We want to extend $f_K$ to a morphism $Y\to U$. 
We can suppose $Y_s$ is irreducible (Remark~\ref{univ}). First $f_K$ 
extends to a morphism $f_0 \colon Y\to X$ by the hypothesis on $X$. 
Consider the sequence $(X_n)_n$ constructed in \S \ref{construct-U}.

(A) Suppose that for some $m\ge 0$, $f$ factors through 
$f_m \colon  Y\to X_m$ and that $f_m(Y_s)\not\subseteq
\Delta'_m$. Let us show $f_m(Y)\subseteq U_m$ or, equivalently,
that $f_m(Y_s)\cap \Delta_m=\emptyset$ because $f_m(Y)\subseteq
\sm(X_m)$. If this is true, then $f_K$ 
extends to $f_m \colon Y\to U_m\subseteq U$ and we are done. 
We distinguish two cases: 
\smallskip 

{\bf Case 1: \it $f_m(Y_s)=\{ x_m\}$ is a singleton}. 
By hypothesis, $x_m\notin \Delta_m'$.  
As $Y_s$ is smooth and $Y_s\to \Spec k$ factors through
$Y_s\to \Spec k(x_m)$, we have $x_m\in \sm(X_m)_s(k)$. 
So $x_m\notin \Delta_m$ and $f_m(Y_s)\subseteq U_m$. 
\smallskip 

{\bf Case 2: \it $f_m(Y_s)$ is not a singleton}. If there exists 
$x_m\in f_m(Y_s)\cap \Delta_m$, let $y_m\in f_m^{-1}(x_m)$
and let $p\in \Delta_K$ specializing to $x_m$.
As $\sm(X_m)_s$ is a smooth curve and $(f_m)_s$ is dominant, 
$(f_m)_s$ is flat at $y_m$. By Lemma~\ref{fd-surj} applied to
$Z=Y$ and $T=\sm(X_m)$, we find a closed point $q\in Y_K$
such that $f_K(q)=p$. This contradicts the hypothesis
that $f_K(Y_K)\subseteq U_K$. 
\smallskip 

(B) Now we show that the condition in (A) is satisfied for 
some $m\ge 0$. We start with $f_0 \colon Y\to X_0$. If 
$f_0(Y_s)\subseteq \Delta'_0$, then $f_0(Y_s)=\{ x_0\}$. 
As in Corollary~\ref{fY-reg}(1), 
$f_0$ factors through $f_1 : Y\to X_1$. If 
$f_1(Y_s)\not\subseteq \Delta_1'$, we are done. Otherwise,
we have a $f_2\colon  Y\to X_2$. Repeating
this construction, we see that if (A) is never
satisfied, then for all $n\ge 0$, $f_K$ extends to 
$f_n \colon  Y\to X_n$ with $f_n(Y_s)\subseteq \Delta_n'$. 
This is impossible by Lemma~\ref{af-pre}(b). Hence $U$ is 
the N\'eron lft-model of $U_K$.
\smallskip

It remains to prove the various properties of $U$. 
We first remark that if $S$ is excellent, then $K^{\sh}$ is algebraically
closed in its completion $\widehat{K^{\sh}}$,
hence the two conditions in Part~(1) are indeed equivalent.
Part (1) is a direct consequence of Lemma~\ref{af-pre}(\ref{af-ft}). 
Parts (2) and (3) follow from the construction in \ref{construct-U}: 
$X\setminus \Delta_0=U_0$ is open in $U$ and its generic fiber is 
$X_K\setminus \Delta_K=U_K$; and  $U=X\setminus \Delta_0$ is equivalent 
to $\Delta'_0=\emptyset$. \qed
\end{subsection}

\medskip 

{As an application of Theorem~\ref{nm-open-st}, we have the following result which will be used in Proposition~\ref{prop-rat-nm2} in the next section}. 

\begin{proposition} \label{nm-P} Let $S$ be local. 
Let $P$ be a regular, proper semi-stable model of $\mathbb P^1_K$
over $S$. Let $\Gamma_1, \cdots, \Gamma_n$ be disjoint sections,
$n\ge 2$, 
in the smooth locus $\sm(P/S)$ such that $\Gamma:=\cup_i \Gamma_i$ 
meets every exceptional divisor of $P/S$ when $P_s$ is not irreducible. 
Let $V_K=\mathbb P^1_K\setminus \Gamma_K$. Let $V$ be the 
$S$-model over $S$   
obtained by the process described in \S \ref{construct-U} starting with
$X:=\sm(P/S)$. Then $V$ is the $S$-N\'eron lft-model of $V_K$ over $S$. 
Moreover, the identity $V_K\to V_K$ and the
inclusion $V_K\to P_K$ extend to 
$$\sm(P/S)\setminus \Gamma\to V\to \sm(P/S)$$ 
and the first morphism is an open immersion. 
\end{proposition}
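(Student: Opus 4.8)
The plan is to verify that Proposition~\ref{nm-P} is a direct corollary of Theorem~\ref{nm-open-st}, the only real work being to check that its hypotheses are met and that the semi-stability of $P$ forces the construction of \S\ref{construct-U} to terminate after the very first step. First I would set $X := \sm(P/S)$ and observe that, since $P$ is a regular proper semi-stable model of $\mathbb P^1_K$, it is in particular the minimal proper regular model (a semi-stable curve of genus $0$ contains no $(-1)$-curve that could be contracted, or one argues directly). Hence by Theorem~\ref{pj-nm} (applied in the form noting that $\mathbb P^1_K$ has genus $0$, so one cannot quote it directly) --- wait: the cleaner route is to use that $P$ is semi-stable, so $X_{\widehat{R^{\sh}}} = \sm(P_{\widehat{R^{\sh}}}/\widehat{R^{\sh}})$ has the same formation, and the hypothesis of Theorem~\ref{nm-open-st} ``$X_{\widehat{R^{\sh}}}$ is the N\'eron lft-model of $X_{\widehat{K^{\sh}}} = \mathbb P^1_{\widehat{K^{\sh}}}$'' must be established. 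This is where the sections $\Gamma_i$ enter: strictly speaking $\mathbb P^1_K$ itself has no N\'eron model (Proposition~\ref{nm-conic}(1)), so one does \emph{not} verify that hypothesis for $X$; instead one applies Theorem~\ref{nm-open-st} with the roles $X_K = \mathbb P^1_K$, $U_K = V_K$, and checks the milder requirement that $X$ is a smooth model of $\mathbb P^1_K$ over $S$ --- which holds by construction --- together with the N\'eron-lft-model condition on $X_{\widehat{R^{\sh}}}$.

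Re-examining: Theorem~\ref{nm-open-st} does demand that $X_{\widehat{R^{\sh}}}$ be the N\'eron lft-model of $\mathbb P^1_{\widehat{K^{\sh}}}$, which is false. So the correct plan is \emph{not} to cite Theorem~\ref{nm-open-st} as a black box but to rerun its construction and proof with $X = \sm(P/S)$ in place of a genuine N\'eron lft-model, using the remark in \S\ref{construct-U} that ``the following construction can be done for any smooth $S$-scheme $X$ such that $U_K$ is a dense open subset of $X_K$, and Lemmas~\ref{dilat} and \ref{af-pre} hold just under these assumptions''. Thus I would: (1) form $V$ by the process of \S\ref{construct-U} starting from $X = \sm(P/S)$; (2) show that a morphism $f_K\colon Y_K \to V_K$ from a smooth $S$-scheme (with $Y_s$ irreducible, $S$ strictly local and excellent after the reduction of \S\ref{construct-U}) first extends to $f_0\colon Y\to X = \sm(P/S)$ --- this is the one point where I cannot invoke a N\'eron property of $X$ and must argue separately; (3) once $f_0$ exists, the rest of the proof of Theorem~\ref{nm-open-st} (parts (A) and (B) of \S\ref{proof-nm-open-st}, which rely only on Lemmas~\ref{dilat}, \ref{af-pre}, \ref{fd-surj} and Corollary~\ref{fY-reg}) goes through verbatim to conclude that $V$ is the N\'eron lft-model of $V_K$.

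For step~(2), the extension of $f_K\colon Y_K\to \mathbb P^1_K$ to $Y\to \sm(P/S)$: if $f_K$ is constant with image a rational point $q$ of $\mathbb P^1_K$, then since $\Gamma$ meets every exceptional component, $q$ specializes into $\sm(P/S)$ along a section, and $f_K$ extends by composing $Y\to S$ with that section (using Lemma~\ref{sm-sect}). If $f_K$ is dominant, then $Y\to \mathbb P^1_K \hookrightarrow P_K$ extends to a morphism $Y\to P$ because $P$ is proper (valuative criterion, $Y$ flat and $S$ a Dedekind scheme, after reducing $Y$ to a trait-valued point --- or more robustly, one extends rational maps from a regular scheme to a proper scheme along codimension-$1$ points and then uses that $Y$ is smooth hence regular, so the locus of definition of the rational map $Y\dashrightarrow P$ is open with complement of codimension $\ge 2$, and a relative curve argument à la Corollary~\ref{fY-reg}); then Corollary~\ref{fY-reg}(1) (if the closed fiber maps to a point) or Corollary~\ref{sm-to-sm}/Proposition~\ref{fY-sm} ensures the image lands in $\sm(P/S)$. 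Since $P$ is semi-stable, $\sm(P/S)$ is already regular, so the minimal desingularization in \S\ref{construct-U} is trivial and the blow-up centers $\Delta_n'$ are honest reduced points in the smooth locus; this is exactly what makes Lemma~\ref{dilat} applicable and, combined with $n\ge 2$, guarantees the ``stopping'' behaviour needed for the final assertions about $\sm(P/S)\setminus\Gamma\to V\to\sm(P/S)$. The main obstacle I expect is precisely step~(2): pinning down cleanly the extension $Y_K\to\mathbb P^1_K \rightsquigarrow Y\to P$ for general smooth $Y$ (not just traits), since one cannot appeal to a N\'eron property here; I would handle it by reduction to the case $Y$ of finite type with irreducible special fibre (Remark~\ref{univ}(2)), extending over codimension-$1$ points via properness of $P$, and invoking the relative-curve machinery of \S\ref{image-sm} exactly as in the proof of Proposition~\ref{2nd-app-of-fY-sm}, after which the remaining claims about open immersions follow from the explicit description in \S\ref{construct-U}.
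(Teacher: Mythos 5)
You have correctly diagnosed that Theorem~\ref{nm-open-st} cannot be quoted as a black box here (since $\mathbb P^1_K$ has no N\'eron lft-model), and correctly isolated the crux: producing the first extension $f_0\colon Y\to \sm(P/S)$ of $f_K\colon Y_K\to V_K$. But your proposed solution to that crux is where the argument breaks. Properness of $P$ and the valuative criterion only define the map at the codimension-$1$ points of $Y$, giving a rational map $Y\dashrightarrow P$ whose indeterminacy locus has codimension $\ge 2$; a rational map from a regular scheme into a \emph{proper} scheme need not extend across such a locus (already $\Spec R[t]\dashrightarrow \mathbb P^1_R$, $t\mapsto [t:\pi]$, has an irremovable indeterminacy at $V(t,\pi)$). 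The codimension-$2$ extension lemma (\cite{BLR}, 4.4/2) applies to affine or group-scheme targets, not to $P$. Your fallback --- ``invoke the relative-curve machinery as in Proposition~\ref{2nd-app-of-fY-sm}'' --- is circular: in that proposition the extension $Y\to U_0$ is obtained precisely from the N\'eron mapping property of the ambient scheme $P$, which is exactly what is unavailable here. Note also that you invoke $n\ge 2$ only for the ``stopping behaviour''; in fact $V$ is typically \emph{not} of finite type in this proposition, and $n\ge 2$ is needed for the existence of $f_0$ itself, not for finiteness. (The side claim that semi-stability forces $P$ to be the minimal regular model is also false: the statement explicitly allows exceptional divisors in $P_s$.)

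The paper closes this gap by an induction on the number of irreducible components of $P_s$, never working over a base scheme that fails to have a N\'eron lft-model. When $P_s$ is irreducible and $n=2$, $P\cong\mathbb P^1_S$ and $V_K\cong\mathbb G_{m,K}$, whose N\'eron lft-model is known (\cite{BLR}, 10.1) and is exactly the output of the construction of \S\ref{construct-U}. For $n\ge 3$ one first removes two sections to land in that N\'eron lft-model $U$, and then applies Theorem~\ref{nm-open-st} legitimately with $X:=U$ to remove the remaining points, checking via the explicit construction that $P\setminus(\Gamma_1\cup\Gamma_2)$ is open in $U$ and that the closures of the $(\Gamma_i)_K$, $i\ge 3$, sit inside it. When $P_s$ is reducible, one contracts an exceptional divisor $E$ to get $Q$, applies the induction hypothesis to $Q$ with the sections not meeting $E$ plus one that does, and again invokes Theorem~\ref{nm-open-st} over the resulting N\'eron lft-model; this is where the hypothesis that $\Gamma$ meets every exceptional divisor is used. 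If you want to salvage your direct approach, you would need an independent proof that every $f_K\colon Y_K\to V_K$ (with $Y$ smooth, $Y_s$ irreducible) extends to $Y\to\sm(P/S)$ --- which for $P=\mathbb P^1_S$ and $\Gamma=\{0,\infty\}$ amounts to reconstructing the unit-factorization argument underlying the N\'eron lft-model of $\mathbb G_m$, i.e., essentially the paper's base case.
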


\begin{proof} {As the formation of $V$ commutes with completion and strict henselization, by Proposition~\ref{nm-gen}~(1)}, we can suppose
$S$ is strictly local and excellent. We prove the result 
by induction on the number of irreducible components of $P_s$. 

First suppose $P_s$ is irreducible. Start with the case $n=2$. 
Then $P\cong \mathbb P^1_S$. 
One can see easily that $V$, which is isomorphic to the N\'eron 
lft-model of $\mathbb G_{m,K}$, 
is obtained by the process described in \S \ref{construct-U} with
$X:=P$. See \cite{BLR}, 10.1. 

If $P_s$ is irreducible and $n\ge 3$, we consider $U_K=\mathbb P_K\setminus 
\{ (\Gamma_1)_K, (\Gamma_2)_K\}$ with its N\'eron lft-model $U$. Then $V$ 
can be obtained 
by the process of \S \ref{construct-U} starting with $X:=U$. As $P\setminus (\Gamma_1\cup \Gamma_2)$ is open in $U$ by the
above discussions, and the Zariski closure of $(\Gamma_i)_K$ in $P$
is $\Gamma_i\subset P\setminus (\Gamma_1\cup \Gamma_2)$ when $i\ge 3$, 
Theorem~\ref{nm-open-st} says that $V$ is the $S$-N\'eron lft-model of $V_K$, 
$P\setminus \Gamma$ is 
open in $V$ and the latter maps to $U$, hence to $P$. 

Now suppose $P_s$ has more than one component. 
Let $E$ be an exceptional divisor in $P$. Up 
to renumbering, we can suppose $\Gamma_1, \dots, \Gamma_r$, $r\le n-1$, 
are exactly the sections of $P$ among the $\Gamma_i$'s not meeting $E$. 
Let $\pi: P\to Q$ be the contraction of $E$ et let $q=\pi(E)\in Q_s$. 
Consider $U_K=\mathbb P^1_K\setminus \{(\Gamma_1)_K, \dots, (\Gamma_r)_K, 
(\Gamma_{r+1})_K\}$. 
Then $Q$ is regular, proper and semi-stable and, if we still denote by $\Gamma_{i}$ the Zariski closure of $(\Gamma_{i})_K$ 
in $Q$,  
$\Gamma_1, \dots, \Gamma_r, \Gamma_{r+1}$ correspond to $r+1$ disjoint sections
of $Q/S$ whose union meets every exceptional divisor of $Q/S$ when $Q_s$ is not 
irreducible. By the induction hypothesis, the N\'eron
lft-model $U$ {of $U_K$} is obtained by the process of \S \ref{construct-U} 
starting from $X:=\sm(Q/S)$. As $(\Gamma_{r+1})_s=\{q\}$ and 
$\pi\colon P\rightarrow Q$ is the blow-up of $Q$ along $\{q\}$, 
by the explicit construction of $U$, $\sm(P)\setminus (\cup_{i\le r+1}\Gamma_i)$ 
is open in $U$. For any $i\ge r+2$, the point of 
$(\Gamma_i)_K$ specializes to {a point of $(\sm(P)\setminus (\cup_{i\le r+1}\Gamma_i))_s$}. 
Then Theorem~\ref{nm-open-st} tells us that $V_K$ {admits an $S$-N\'eron lft-model $V$, and the latter} is obtained from 
$U$ by blowing-up {the closed points $\cup_{i\ge r+2}(\Gamma_i)_s$ contained in the open subset 
$\sm(P)\setminus (\cup_{i\le r+1}\Gamma_i)\subset U$}, taking
the smooth locus and
start again etc. In particular  $\sm(P/S)\setminus \Gamma$ is an
open subscheme of $V$ and the latter maps to $\sm(P/S)$. 
\end{proof}

\end{section}

\begin{section}{N\'eron models of open subsets of a smooth conic}
\label{rat-nm} 

In this section, we suppose $S=\Spec(R)$ is local and excellent. 
We prove the existence of the N\'eron model for affine open subsets of a smooth projective conic, whose complement is non-empty and consists of ramified points. 

\begin{proposition} \label{prop-rat-nm2}Let $S$ be an excellent local Dedekind 
scheme with field of functions $K$. Let $C_K$ be a projective 
{smooth conic over $K$}. Let $\Delta_K$ be a {non-empty}
finite closed subset of $C_K$ {(endowed with the reduced structure)}
such that $\Delta_K(K^{\sh})=\emptyset$. Then 
$U_K:=C_K\setminus\Delta_K$ admits a N\'eron model over $S$.
\end{proposition}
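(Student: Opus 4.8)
The plan is to reduce to the setting of Theorem~\ref{nm-open-st} (or rather its refinement Proposition~\ref{nm-P}) by passing to a suitable regular proper semi-stable model of $C_K$. Since $S$ is excellent and local, and $C_K$ is a smooth projective conic, I first invoke the hypothesis $\Delta_K(K^{\sh})=\emptyset$. Note that in particular $C_K(K^{\sh})$ need not be empty; the conic itself may have points. The first key observation is that after base change to $S^{\sh}=\Spec(R^{\sh})$, by Proposition~\ref{nm-gen}~(1) and the compatibility of the construction in \S\ref{construct-U} with strict henselization and completion, it suffices to produce the N\'eron model over the strictly henselian excellent base; and then $C_{K^{\sh}}$, being a conic over the strictly henselian field $K^{\sh}$ with $C_K(K^{\sh})$ possibly nonempty --- but here is where the case division enters: if $C_K(K^{\sh})\neq\emptyset$ then $C_{K^{\sh}}\cong\mathbb P^1_{K^{\sh}}$, and if not, $C_{K^{\sh}}$ is an anisotropic conic which is then its own N\'eron model since it has no $K^{\sh}$-points, hence $U_{K^{\sh}}=C_{K^{\sh}}\setminus\Delta_{K^{\sh}}$ likewise has no $K^{\sh}$-points and is its own N\'eron model (being proper already when $\Delta$ is empty, but $\Delta$ is nonempty --- still, an anisotropic conic minus points over a field has no rational points, so $U_K$ is its own N\'eron model trivially). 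So the substantive case is $C_{K^{\sh}}\cong\mathbb P^1_{K^{\sh}}$.

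In that case I would proceed as follows. Over the strictly henselian excellent $R$, choose a regular proper \emph{semi-stable} model $P$ of $\mathbb P^1_K$ over $S$ dominating the Zariski closure of $\Delta_K$, arranged so that the closure of $\Delta_K$ in $P$ consists of $n\ge 1$ disjoint sections $\Gamma_1,\dots,\Gamma_n$ lying in the smooth locus $\sm(P/S)$ and meeting every exceptional component of $P_s$ when $P_s$ is reducible; such a model exists by repeated blowing-up and the theory of semi-stable reduction / embedded resolution (\cite{Liu}, 9.2.26 and 9.3.21, $R$ excellent). Here the condition $n\ge 2$ required by Proposition~\ref{nm-P} needs care: if $\Delta_K$ is a single closed point of $\mathbb P^1_{K^{\sh}}$ of degree $1$ (which it is, over the strictly henselian --- hence separably closed --- residue field setting, every point of $\Delta_K$ would be rational, but the hypothesis $\Delta_K(K^{\sh})=\emptyset$ forbids this!). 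So in fact the hypothesis $\Delta_K(K^{\sh})=\emptyset$ guarantees that no component of $\Delta_K$ becomes a $K^{\sh}$-point; but over $K^{\sh}$ with $C_{K^{\sh}}\cong\mathbb P^1$, every closed point of positive degree splits into $\ge 2$ geometric points --- however these are not $K^{\sh}$-rational only if the residue field is not separably closed, contradiction. This tension is exactly the main obstacle (see below). Granting the model $P$ with $\ge 2$ sections, Proposition~\ref{nm-P} applies directly: the scheme $V$ obtained by the process of \S\ref{construct-U} starting from $X:=\sm(P/S)$ is the $S$-N\'eron lft-model of $U_K=\mathbb P^1_K\setminus\Gamma_K$.

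Finally I must upgrade ``lft-model'' to ``model'' (finite type). By Theorem~\ref{nm-open-st}~(1), $V$ is of finite type over $S$ iff $X=\sm(P/S)$ is of finite type (it is, being an open subscheme of the proper $P$) and $\Delta_K\cap C_K(\widehat{K^{\sh}})=\emptyset$; since $S$ is excellent this last condition is equivalent to $\Delta_K\cap C_K(K^{\sh})=\emptyset$, which is precisely our hypothesis $\Delta_K(K^{\sh})=\emptyset$. Hence $V$ is of finite type, i.e.\ a genuine N\'eron model. Descending back down from $S^{\sh}$ to $S$ via Proposition~\ref{nm-gen}~(1) (faithfully flat descent, applied to $\widehat{R^{\sh}}\to R^{\sh}$ and $R^{\sh}\to R$), we conclude that $U_K$ admits a N\'eron model over $S$.

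\textbf{Main obstacle.} The delicate point is reconciling the constraint $\Delta_K(K^{\sh})=\emptyset$ with the structure of $C_{K^{\sh}}$: over a strictly henselian base the residue field is separably closed but $K^{\sh}$ itself is generally far from separably closed, so a degree-$1$ closed point of $\Delta_K$ need not acquire a $K^{\sh}$-point --- wait, a degree-one closed point always does. So the real content is that $\Delta_K$ has \emph{no} degree-one component over $K^{\sh}$, equivalently after the reduction every irreducible component of $\Delta_{K^{\sh}}$ has degree $\ge 2$; this is what must be exploited to either (a) get at least two geometric/sectional branches so that Proposition~\ref{nm-P}'s hypothesis $n\ge 2$ is met on a suitable cover, or (b) when $C_{K^{\sh}}$ is anisotropic, conclude triviality. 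Handling the ``single closed point of degree $2$'' sub-case --- where $U_K$ is a form of $\mathbb G_{m,K}$ twisted by a quadratic extension, and one may need to pass to the quadratic splitting field, build the N\'eron model there, and descend using Lemma~\ref{connectedness} together with Proposition~\ref{nm-gen} --- is where I expect the argument to require the most care; this is presumably also where the hypothesis that $S$ is excellent and the earlier results on N\'eron lft-models of $\mathbb G_m$ (\cite{BLR}, \S10.1) and on $\dim T$ in Lemma~\ref{connectedness} get combined.
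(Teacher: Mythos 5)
There is a genuine gap, and it is precisely at the point you yourself flag as the ``main obstacle'' and then leave unresolved. The substantive case is $C_{K^{\sh}}\cong\mathbb P^1_{K^{\sh}}$ with $\Delta_{K^{\sh}}$ consisting of closed points of degree $\ge 2$; there is no contradiction here, because $K^{\sh}$ is not separably closed (only its residue field is), so such points exist in abundance (purely inseparable points, or points whose residue field is ramified over $K^{\sh}$). In that case your plan breaks for two structural reasons. First, Proposition~\ref{nm-P} computes the N\'eron lft-model of the complement of a union of \emph{sections}, i.e.\ of $K$-rational points; the hypothesis $\Delta_K(K^{\sh})=\emptyset$ guarantees that the closure of $\Delta_K$ is \emph{never} a union of sections, so you cannot feed $\Delta$ into Proposition~\ref{nm-P} --- and indeed removing rational points produces an lft-model that is \emph{not} of finite type (Lemma~\ref{af-pre}(c)), the opposite of what you must prove. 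Second, you cannot invoke Theorem~\ref{nm-open-st}(1) with $X_K=C_K$: that theorem requires a smooth model $X$ whose base change to $\widehat{R^{\sh}}$ is a N\'eron lft-model of $X_{\widehat{K^{\sh}}}$, and $\mathbb P^1_K$ admits no N\'eron lft-model at all (Proposition~\ref{nm-conic}(1)). The whole point of this proposition is to treat the genus-$0$ case that falls outside the scope of Theorem~\ref{nm-open-st}.

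The paper's proof supplies the missing idea. The candidate model is $U=\sm(P/S)\setminus\Delta$ for a semi-stable $P$ arranged (by blowing up separable smooth points and contracting superfluous exceptional divisors) so that $\Delta$ meets $P_s$ only at singular or smooth inseparable points. To verify the mapping property for $f_K\colon Y_K\to U_K$, one chooses an \emph{auxiliary} ample union of $n\ge 2$ disjoint sections $\Gamma\subset U(S)$ with $f_K(Y_K)\not\subset\Gamma_K$, removes $H=\overline{f_K^{-1}(\Gamma_K)}$ from $Y$, extends the morphism on $Y\setminus H$ by applying Proposition~\ref{nm-P} to $P_K\setminus\Gamma_K$ and composing with $V\to\sm(P/S)$, and finally extends across the codimension-$\ge 2$ locus $Y\setminus(Y'\cup Y_K)$ using that the relevant connected component of $U$ is affine (Weil's extension theorem, \cite{BLR}~4.4/2). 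This ``remove an ample divisor, then extend by affineness'' step is entirely absent from your proposal. Two further points: the general (non-split) conic is handled by passing to a finite \emph{unramified} splitting extension and descending via Weil restriction (Propositions~\ref{neron-weil} and~\ref{quasi-pj}), not by strict henselization; and your final ``descend from $S^{\sh}$ to $S$ via Proposition~\ref{nm-gen}(1)'' is backwards --- that statement presupposes a model already defined over $S$, whereas your construction of $P$ takes place over $R^{\sh}$.
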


\begin{proof} 
Assume first $C_K(K)\neq \emptyset$, or equivalently, 
$C_K\cong \mathbb{P}_{K}^{1}$.  Consider a smooth proper model 
isomorphic to $\mathbb P^1_S$ of $\mathbb P^1_K$. 
After finitely blowing-ups along smooth separable
points of the special fiber of the latter, the construction of 
\S \ref{construct-U} gives us  a regular proper semi-stable model 
$P$ of $C_K\cong\mathbb P^1_K$ such that the intersection 
$\Delta\cap \sm(P/S)_s(k(s)^{\mathrm{sep}})$ is empty. By successively blowing-down 
exceptional divisors of $P$ which do not meet the Zariski
closure $\Delta$ of $\Delta_K$, we can suppose that:
\begin{enumerate}
\item $\Delta$ {meets every exceptional divisor of} $P/S$ {if $P_s$ is not irreducible} and
\item $\Delta$ meets $P_s$ only at singular points or smooth inseparable points. 
\end{enumerate}
\noindent{\bf Claim:} {\sl under the above conditions, 
$U:=\sm(P/S)\setminus \Delta$ is the N\'eron model of $U_K$.}

To prove the claim we can suppose $S$ is strictly local. In
particular, each (reduced) irreducible component of $P_s$ is
isomorphic to $\mathbb P_{k}^{1}$. 
Let $Y$ be a smooth $S$-scheme with connected fibers and let $f_K: Y_K\to U_K$ be a morphism
of $K$-schemes. Let $\Gamma_{1}, \dots, \Gamma_n \subset U(S)$ be 
disjoint sections such that $n\ge 2$, 
$\Gamma:=\cup_i \Gamma_i$ is ample 
in $P$ and such that $f_K(Y_K)\not\subset \Gamma_K$. 

Set $H=\overline{f_{K}^{-1}(\Gamma_K)}\subset Y$. This 
is a closed subset of $Y$, empty or of codimension $1$. 
Let $Y':=Y\setminus H$. We claim that the restriction 
$f_K|_{Y_K'} : Y'_K\to U_K\setminus \Gamma_K$ extends to a 
morphism $Y' \rightarrow U$. Indeed,  
let $V$ be the N\'eron lft-model of $V_K:=P_K\setminus \Gamma_K$.  
Then $f_K$ induces a morphism $f': Y'\to V$. 
By Proposition~\ref{nm-P}, we have a morphism $V\to \sm(P/S)\setminus \Gamma=U$. 
Consequently, $f'$ extends to a morphism $f'' : Y''\to U$ where $Y''=Y'\cup Y_K$. {As $Y_s$ is connected, the image $f''_s\colon Y_s''\to U_s$ is contained in some connected component. Let $U'$ denote the union of the latter connected component of $U_s$ with $U_K$. Then $f''$ factors through $U'\subset U$.}
 
On the other hand, {the scheme $U'$ can be obtained by first blowing-down successively all the irreducible components of $P_s$ other than the one containing $U_s '$, then removing from the resulting proper smooth model of $C_K\cong\mathbb P_K^1$ the closure of $\Delta_K$. In particular,} the scheme $U'$ is affine.  Thus $f''$ 
extends to a morphism $Y\rightarrow U'\subset U$ since $Y\setminus Y''$ is a closed subset of 
codimension $\geq 2$ of the normal scheme $Y$ (\cite{BLR}, 4.4/2). Therefore $U$ satisfies 
N\'eron mapping property, as desired. 

For the general case, if $X_K(K^{\sh})=\emptyset$, then $X_K$ is the $S$-N\'eron model of itself. Otherwise,  
there exists a finite {unramified} extension $K'$ of $K$ 
such that $C_K(K')\neq \emptyset$. Since $K'/K$ is {unramified}, the complement of $U_{K'}$ in its smooth compactification $C_{K'}$ consists of closed points which {are} still {ramified} of degree $>1$ over $K'$. Therefore, $U_{K'}$ admits a N\'eron model $U'$ over the semi-local ring $S'$, the normalization of $S$ in $K'$. 
Hence $U_K$ admits also a $S$-N\'eron model by Proposition~\ref{neron-weil} {and Proposition~\ref{quasi-pj}}. 
\end{proof}

\begin{remark}\label{comp-to-prop-rat-nm2} Keep the notation of Proposition~\ref{prop-rat-nm2}, and let $U$ be the $S$-N\'eron model of $U_K$. Assume $C_K\cong \mathbb P_K^1$ with a proper smooth $S$-model $C$
such that the Zariski closure $\Delta$ of $\Delta_K$ (with the reduced structure) is regular. Then the canonical morphism obtained from N\'eron mapping property $C\setminus \Delta\rightarrow U$ is an open immersion. Indeed, as $\Delta$ is regular, after blowing up all separable closed points of $\Delta$, we obtain a proper semi-stable model $P$ of $C_K$ such that the conditions (1) (2) in the proof of Proposition~\ref{prop-rat-nm2} are satisfied. By the claim in the same proof, $U=\sm(P/S)\setminus \overline{\Delta_K}$. Thereby $C\setminus \Delta$ is an open subset of $U$. 
\end{remark} 

\begin{remark} In Proposition~\ref{prop-rat-nm2}, we can not drop the
  assumption that $S$ is excellent. For example, let $k$ be a field of
  characteristic $2$, and $K:=k(t, u^2)\subset k[[t]]\subset k((t))$
  with $u\in k[[t]]$ transcendental over $k(t)$. The discrete valuation on
$k((t))$ induces a discrete valuation on $K$, and let $R$ be the
corresponding (discrete) valuation ring (with $t\in R$ a
uniformizer). 
The completion $\widehat R$ of
$R$ is $k[[t]]\subset k((t))$, hence
$\widehat{K}=\mathrm{Frac}(\widehat{R})=k((t))$. 
Note that $u\in \widehat K$ is purely inseparable
  of degree $2$ over $K$ (in particular, $R$ is not excellent).
Consider $v=u^2\in K$ and 
let $U_K=\Spec(K[X_1,X_2]/(X_1^2-vX_0^2-X_0)$. This is the underlying scheme of 
a unipotent group. As $v\notin K^2$, $U_K\not\cong \mathbb A_K^1$. On the other 
hand,  $U_{\widehat{K}}\cong \mathbb{A}_{\widehat{K}}^{1}$. It follows that 
$U_{\widehat K}\cong \mathbb{G}_{a,\widehat K}$. Therefore,  by \cite{BLR}, 10.2/2, 
$U_K$ does not admit N\'eron lft-model over $S$. 
\end{remark}
\end{section}

\begin{section}{N\'eron lft-models of affine curves}\label{nm-rat-gl} 

The aim of this section is to prove the existence of N\'eron
lft-models of affine curves over $K$ different from the affine line 
(Theorem~\ref{nm-af}). 
Let $S$ be a Dedekind scheme with field of functions $K$. 

\begin{subsection}{Globalizing local N\'eron lft-models} 

\begin{lemma}\label{af-nm-global} {Let $S$ be a Dedekind scheme with $K$ its field of functions. Let $U_K$ be a separated connected smooth curve over $K$. 
Suppose that
\begin{enumerate}[{\rm (i)}]
\item for any closed point $s\in S$, $U_K$ admits a N\'eron lft-model 
$U(s)$ over $\cO_{S,s}$;
\item there exists a model of finite type $U^0$ of $U_K$ over $S$ such that for 
all $s\in S$, the isomorphism $U^0_K\to U(s)_K$ extends to an 
open immersion $U^0\times_S \Spec(\cO_{S,s})\to U(s)$. 
\end{enumerate}
Then $U_K$ admits a N\'eron lft-model over $S$. }
\end{lemma}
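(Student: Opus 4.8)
The plan is to glue the local N\'eron lft-models $U(s)$ along the finite-type model $U^0$. First I would produce a scheme $U$ over $S$ by gluing: over the (finitely many, or locally finitely many) closed points $s$ where $U^0_{\cO_{S,s}} \to U(s)$ is not an isomorphism, $U(s)$ differs from $U^0_{\cO_{S,s}}$ only by the extra components living in the closed fiber, obtained via the blow-up construction of \S\ref{construct-U}. I would patch these extra pieces onto $U^0$ one closed point at a time, using the open immersions $U^0 \times_S \Spec(\cO_{S,s}) \to U(s)$ as the gluing data; since $U(s)$ is smooth and separated over $\cO_{S,s}$ and agrees with $U^0$ on the generic fiber, the glued object $U$ is a smooth separated scheme locally of finite type over $S$ with generic fiber $U_K$, and it comes equipped with isomorphisms $U \times_S \Spec(\cO_{S,s}) \cong U(s)$ for every closed point $s$. (Some care is needed when infinitely many $s$ are ``bad'', but the construction is local on $S$, and over any open affine of $S$ only finitely many closed points can contribute a proper modification of a fixed finite-type model, because $U^0$ is already a model of finite type; alternatively one argues directly with the compatibility of the blow-up construction.)

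Once $U$ is constructed with $U_{\cO_{S,s}} \cong U(s)$ the N\'eron lft-model of $U_K$ over $\cO_{S,s}$ for every closed point $s$, the statement follows immediately from Corollary~\ref{neron-l2g}: a scheme locally of finite type over a Dedekind scheme $S$ with smooth finite-type generic fiber is the $S$-N\'eron lft-model of its generic fiber if and only if its base change to each $\Spec(\cO_{S,s})$ is the $\cO_{S,s}$-N\'eron lft-model. Thus the only real content is the gluing, i.e. checking that the $U(s)$ are mutually compatible over the overlaps $\Spec(\cO_{S,s}) \cap \Spec(\cO_{S,s'})$, but any two such overlaps meet only in $\Spec(K)$, where all the schemes are canonically identified with $U_K$; so there is no cocycle condition to verify beyond the trivial one on the generic fiber, and hypothesis (ii) guarantees that $U^0$ provides a common open subscheme over each local ring, which is exactly what is needed to perform the gluing coherently over all of $S$ at once rather than merely locally.

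The main obstacle I anticipate is purely bookkeeping: making precise that ``gluing $U^0$ with each $U(s)$'' yields a \emph{single} scheme of locally finite type over $S$ rather than just over each local ring — that is, upgrading the local data to a global object. The clean way to do this is to note that each open immersion $U^0_{\cO_{S,s}} \hookrightarrow U(s)$ has closed complement supported on the special fiber over $s$, so by a standard spreading-out argument (\cite{EGA}, IV.8.8.2, 8.10.5) each $U(s)$ extends to an $S$-scheme of finite type over a suitable open neighborhood $V_s \ni s$, restricting to $U^0$ away from $s$; these patches over the $V_s$ agree on overlaps with $U^0$, hence with each other, and glue to the desired $U/S$. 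After that, invoking Corollary~\ref{neron-l2g} closes the argument. I would keep the exposition short here since, as the authors themselves suggest by citing \cite{BLR}~1.2/4 for the analogous finite-type fact, all of this is routine; the substance of Theorem~\ref{nm-af-i} lies in verifying hypotheses (i) and (ii), which is done elsewhere via Theorem~\ref{nm-open-st} and Proposition~\ref{prop-rat-nm2}.
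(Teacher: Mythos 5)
Your overall strategy is the same as the paper's: build a global scheme $U$ locally of finite type over $S$ by gluing the local models $U(s)$ along the finite-type model $U^0$, and then conclude with Corollary~\ref{neron-l2g}. But there is a genuine gap in the key step. You assert that ``each $U(s)$ extends to an $S$-scheme of finite type over a suitable open neighborhood $V_s\ni s$'' by standard spreading-out. This fails as stated: $U(s)$ is only \emph{locally} of finite type over $\cO_{S,s}$ and is in general not quasi-compact --- its closed fiber may have infinitely many connected components (already for $U_K=\mathbb G_{m,K}$ the closed fiber of the N\'eron lft-model is an infinite disjoint union of copies of $\mathbb G_m$). The spreading-out results you cite (\cite{EGA}, IV.8.8.2, 8.10.5) apply to quasi-compact, finitely presented objects, so they cannot be applied to $U(s)$ wholesale. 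This is precisely the difficulty that distinguishes the lft case from the finite-type case already handled by \cite{BLR}, 1.2/4. The paper's proof resolves it by decomposing $U(s)$ into the quasi-compact opens $U_\alpha(s)=U_K\cup U(s)_{s,\alpha}$, one for each connected component $U(s)_{s,\alpha}$ of the closed fiber, spreading out each $U_\alpha(s)$ separately to a separated finite-type $S$-scheme $U_\alpha$, gluing $U_\alpha$ with $U^0$ over a dense open $S_\alpha\subseteq S\setminus\{s\}$ (using that the identification is unique by separatedness), and then gluing the resulting $V_\alpha$ over $\alpha$ with overlap $U^0$ to produce $V(s)$. Your argument needs this componentwise step to be correct.

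A second, smaller point: your parenthetical claim that ``over any open affine of $S$ only finitely many closed points can contribute a proper modification of a fixed finite-type model'' is false --- in the $\mathbb G_m$ example every closed point contributes infinitely many extra components. Fortunately this does not matter: the gluing over distinct closed points $s\ne s'$ is done with overlap exactly $U^0$, so no finiteness of the set of ``bad'' points is needed, and your observation that the cocycle condition is trivial on overlaps is the correct reason the global gluing works. Once the componentwise spreading-out is supplied, the rest of your argument (in particular the reduction to Corollary~\ref{neron-l2g}) matches the paper and is fine.
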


\begin{proof} The proof is inspired from that of \cite{BLR}, 10.1/7. 
The N\'eron lft-model of 
$U_K$ over $S$ will be obtained by gluing the local N\'eron lft-models
$U(s)$ for all closed points $s\in S$. 
We will first extend $U(s)$ to a model $V(s)$ over $S$ which coincides with
$U^0$ above $S\setminus \{ s\}$ and then glue the various $V(s)$
in a natural way to obtain the N\'eron lft-model $U$ of $U_K$ over $S$. 

Fix a closed point $s$. As $U(s)$ is locally of finite type, 
any connected component $U(s)_{s,\alpha}$ 
of $U(s)_s$ is open in the closed fiber $U(s)_s$, hence 
its union with $U_K$ is a quasi-compact open subset $U_\alpha(s)$ 
of $U(s)$. We can extend $U_\alpha(s)$ to a {separated} scheme of finite 
type $U_\alpha$ over $S$ (use \cite{EGA}, IV.8.10.5). As $U_\alpha$
and $U^0$ are both of finite type over $S$ 
and have the same generic fiber, they are $S$-isomorphic over a dense open 
subset $S_\alpha\subseteq S\setminus \{ s\}$ (and the $S$-isomorphism
is unique once the isomorphisms $U^0_K\to U_K$, $(U_{\alpha})_K\to U_K$
are fixed because $U_{\alpha}$ is separated over $S$). Now we glue the 
separated morphisms of finite type 
$$U_\alpha\times_S (S_\alpha\cup \{ s\})\to S_\alpha\cup \{ s\}, \quad 
U^0\times_S (S\setminus \{s\})\to S\setminus \{s\}$$ 
above $(S_\alpha\cup \{ s\})\cap (S\setminus \{s\})=S_\alpha$. 
The resulting $S$-scheme $V_\alpha$ is separated and of finite type because these
properties are satisfied above $S_\alpha\cup \{ s\}$ and $S\setminus \{ s\}$. 
By construction, we have canonically 
$$V_\alpha\times_S (S\setminus \{s\})=U^0\times_S
(S\setminus \{ s\}), \quad  V_\alpha\times_S \Spec(\cO_{S,s})=U_\alpha(s).$$
Next we glue the various $V_\alpha$ (when $U_\alpha(s)$ runs through
the connected components of $U(s)_s$) with the condition 
$V_\alpha\cap V_{\alpha'}=U^0$
if $\alpha\ne \alpha'$. The resulting $S$-scheme $V(s)$
satisfies canonically 
$$V(s)\times_S (S\setminus \{s\})=U^0\times_S
(S\setminus \{ s\}), \quad  V(s)\times_S \Spec(\cO_{S,s})=U(s).$$
Hence $V(s)$ is separated and locally of finite type over $S$. 
Moreover, Condition (ii) implies that the isomorphism $U^0_K\to V(s)_K=U(s)_K$ extends to an open immersion $U^0\to V(s)$. 

Finally, we glue the various $V(s)$ when $s$ runs through the closed
points of $S$ with the condition $V(s)\cap V(s')=U^0$ if $s\ne s'$. 
The resulting $S$-scheme $U$ is locally of finite type and 
$U\times_S \Spec(\cO_{S,s})\cong U(s)$ for all $s\in S$. 
By Corollary~\ref{neron-l2g}, $U$ is the N\'eron lft-model of $U_K$ over$S$, as required. 
\end{proof} 

\begin{lemma} \label{af-nm-global-2} Let $S$ be a Dedekind scheme. 
Let $X_K$ be a smooth connected separated curve over $K$
and let $U_K$ be an open dense subscheme of $X_K$. 
Suppose that $X_K$ has a smooth model $X$ over $S$ such that 
for all closed points $s\in S$, $X\times_S \Spec(\cO_{S,s})$ satisfies
the property in Theorem~\ref{nm-open-st}. Then $U_K$ admits 
a N\'eron lft-model over $S$. 
\end{lemma}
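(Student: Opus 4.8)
The plan is to deduce Lemma~\ref{af-nm-global-2} from Lemma~\ref{af-nm-global} by checking that its two hypotheses (i) and (ii) hold for $U_K$, using the local theory already developed in Theorem~\ref{nm-open-st} together with the explicit construction in \S\ref{construct-U}. So first I would fix a closed point $s\in S$ and apply Theorem~\ref{nm-open-st} to the strictly-local-at-$s$ situation: since $X\times_S\Spec(\cO_{S,s})$ is assumed to satisfy the property in Theorem~\ref{nm-open-st} (namely that its base change to $\widehat{R^{\sh}}$ is the N\'eron lft-model of $X_{\widehat{K^{\sh}}}$), Theorem~\ref{nm-open-st} directly produces a N\'eron lft-model $U(s)$ of $U_K$ over $\cO_{S,s}$. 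This gives hypothesis (i) immediately. Moreover, part~(2) of Theorem~\ref{nm-open-st} tells us that, writing $\Delta$ for the Zariski closure of $\Delta_K:=X_K\setminus U_K$ in $X\times_S\Spec(\cO_{S,s})$, the identity on $U_K$ extends to an \emph{open immersion} $(X\setminus\Delta)\times_S\Spec(\cO_{S,s})\hookrightarrow U(s)$.

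The remaining point is to produce a single finite-type model $U^0$ of $U_K$ over $S$ satisfying hypothesis (ii) simultaneously for all closed points $s$. The natural candidate is $U^0:=X\setminus\overline{\Delta_K}$, where $\overline{\Delta_K}$ is the Zariski closure of $\Delta_K$ in $X$; note $X$ need not be of finite type, so a little care is needed, but $\overline{\Delta_K}$ is a closed subscheme whose generic fiber is the finite set $\Delta_K$, and $X\setminus\overline{\Delta_K}$ is a smooth separated model of $U_K$ which we may assume (after possibly shrinking $X$ to a quasi-compact open neighbourhood of $U_K$, which is harmless as everything in sight is compatible with restriction to opens) is of finite type over $S$. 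The key compatibility is that $\overline{\Delta_K}\times_S\Spec(\cO_{S,s})$ is precisely the closure $\Delta$ of $\Delta_K$ in $X\times_S\Spec(\cO_{S,s})$ used in Theorem~\ref{nm-open-st}~(2) — this is because taking scheme-theoretic (or reduced) closure commutes with the flat localization $\Spec(\cO_{S,s})\to S$. Hence $U^0\times_S\Spec(\cO_{S,s})=(X\setminus\Delta)\times_S\Spec(\cO_{S,s})$, and by the previous paragraph this maps by an open immersion into $U(s)$, which is exactly hypothesis (ii).

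With (i) and (ii) verified, Lemma~\ref{af-nm-global} applies and yields the desired N\'eron lft-model of $U_K$ over $S$, finishing the proof. The main obstacle I expect is the bookkeeping around finite type: $X$ itself is only assumed \emph{smooth} (locally of finite type) over $S$, so one must first replace $X$ by a quasi-compact open containing $U_K$ to get a finite-type $U^0$, and then argue that this shrinking does not affect the hypothesis imported from Theorem~\ref{nm-open-st} (it does not, because the property of $X\times_S\Spec(\cO_{S,s})$ being a N\'eron lft-model after completion is local on $X$ near the part relevant to $U_K$, and the construction in \S\ref{construct-U} only depends on a neighbourhood of $U_K$). One should also double-check that the open immersion $U^0\times_S\Spec(\cO_{S,s})\hookrightarrow U(s)$ produced is the one compatible with the fixed identifications of generic fibers, which is automatic since $U(s)$ is separated. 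Everything else is a direct citation of Lemma~\ref{af-nm-global} and Theorem~\ref{nm-open-st}.
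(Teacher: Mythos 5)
Your proposal is correct and follows essentially the same route as the paper: the paper likewise takes a quasi-compact open $X^0\subset X$ containing the generic fiber, sets $U^0:=X^0\setminus\overline{\Delta_K}$, invokes Theorem~\ref{nm-open-st} for hypothesis~(i) and its part~(2) for the open immersion in hypothesis~(ii), and concludes by Lemma~\ref{af-nm-global}. The compatibility of Zariski closures with the flat localization $\Spec(\cO_{S,s})\to S$ that you flag is indeed the only point to check, and it holds as you say.
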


\begin{proof} Let $\Delta_K:=X_K\setminus U_K$, $X^0\subset X$ any
  quasi-compact open subset containing $X_K$, and
  $\Delta^0:=\overline{\Delta_K}\subset X^0$. By
  Theorem~\ref{nm-open-st}, the N\'eron model of $U_K$ over
  $\mathrm{Spec}(\cO_{S,s})$ exists for all closed points $s\in S$,
  and by Theorem~\ref{nm-open-st}~(2), $U^0:=X^0\setminus \Delta^0$
  verifies the hypothesis of Lemma~\ref{af-nm-global}. Thus we can
  apply Lemma~\ref{af-nm-global} to conclude. 
\end{proof}

\begin{proposition}\label{af-pg} Let $S$ be a Dedekind scheme. 
Let $X_K$ be a connected regular proper curve over $K$ of arithmetic 
genus $\ge 1$. Let 
$U_K$ be a dense open subscheme of $X_K$ contained in the smooth locus of $X_K/K$. Suppose either $S$ is excellent or the scheme $X_K$ is smooth over $K$. Then $U_K$ admits a N\'eron lft-model over $S$. 
\end{proposition}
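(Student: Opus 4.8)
The plan is to realise $U_K$ as a dense open subscheme of a suitable smooth $S$-curve which, after base change to the strict henselisation and completion at each closed point, satisfies the hypothesis of Theorem~\ref{nm-open-st}, and then to invoke its globalisation, Lemma~\ref{af-nm-global-2}. First I would let $\Xm$ be the minimal proper regular model of $X_K$ over $S$ (which exists since $S$ is excellent or $X_K/K$ is smooth) and set $X_{\sm}=\sm(\Xm/S)$. By Theorem~\ref{pj-nm}, $X_{\sm}$ is the N\'eron model over $S$ of the smooth locus $X_{K,\sm}$ of $X_K$; in particular it is a smooth model of $X_{K,\sm}$ over $S$ with $(X_{\sm})_K=X_{K,\sm}$. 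Since $X_K$ is regular and connected, it is integral, so $X_{K,\sm}$ is a smooth, separated and connected curve over $K$; and $U_K$, being a dense open subscheme of $X_K$ contained in $X_{K,\sm}$, is a dense open subscheme of $X_{K,\sm}$.

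Next I would check that $X_{\sm}$ satisfies the hypothesis of Lemma~\ref{af-nm-global-2} relative to $U_K\subseteq X_{K,\sm}$, i.e.\ that for every closed point $s\in S$ the base change $X_{\sm}\times_S\Spec(\cO_{S,s})$ satisfies the property in Theorem~\ref{nm-open-st}. By Corollary~\ref{neron-l2g}, $X_{\sm}\times_S\Spec(\cO_{S,s})$ is the N\'eron model of $X_{K,\sm}$ over $\cO_{S,s}$; by uniqueness of N\'eron models together with Theorem~\ref{pj-nm} applied over $\cO_{S,s}$, it is the smooth locus of the minimal proper regular model of $X_K$ over $\cO_{S,s}$. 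Now the extension of discrete valuation rings $\cO_{S,s}\to\widehat{\cO_{S,s}^{\sh}}$ has ramification index $1$ and separable --- indeed separably closed --- residue extension, hence is of index $1$. Therefore Remark~\ref{nm-ind1}, applied over the local base $\Spec(\cO_{S,s})$, shows that $X_{\sm}\times_S\Spec(\widehat{\cO_{S,s}^{\sh}})$ is the N\'eron model of $(X_{K,\sm})_{\widehat{K_s^{\sh}}}$ over $\widehat{\cO_{S,s}^{\sh}}$, which is precisely the condition required in Theorem~\ref{nm-open-st} of the model $X_{\sm}\times_S\Spec(\cO_{S,s})$. Applying Lemma~\ref{af-nm-global-2} with $X=X_{\sm}$ then produces a N\'eron lft-model of $U_K$ over $S$, as desired.

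Since Theorem~\ref{pj-nm}, Lemma~\ref{af-nm-global-2} and Remark~\ref{nm-ind1} are all at our disposal, the argument is essentially bookkeeping; the only point that genuinely requires care is the passage, at each closed point $s$, from the global N\'eron model $X_{\sm}$ to a N\'eron model over $\widehat{\cO_{S,s}^{\sh}}$. This is exactly the content of Remark~\ref{nm-ind1}, whose proof reduces --- via faithfully flat descent and Proposition~\ref{nm-gen}(3) --- to the facts that $\Xm$ commutes with completion and that the completed base is automatically excellent, so that Theorem~\ref{pj-nm} applies over it even when $S$ itself is not excellent (the case $X_K/K$ smooth).
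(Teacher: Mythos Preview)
Your proof is correct and follows essentially the same route as the paper's: take $X=X_{\sm}$, verify via commutation of $\Xm$ and its smooth locus with strict henselisation and completion that $X\times_S\Spec(\widehat{\cO_{S,s}^{\sh}})$ is the N\'eron model over $\widehat{\cO_{S,s}^{\sh}}$, and then apply Lemma~\ref{af-nm-global-2}. The only cosmetic difference is that the paper invokes the commutation with completion and strict henselisation directly (citing \cite{Liu}, 9.3.28), whereas you route this step through Remark~\ref{nm-ind1}, which packages the same argument; your detour through Corollary~\ref{neron-l2g} and uniqueness to identify $X_{\sm}\times_S\Spec(\cO_{S,s})$ with the local smooth locus is also unnecessary for the same reason, but harmless.
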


\begin{proof} Let $X=X_\sm$ be the smooth locus of the minimal 
proper regular model of $X_K$ over $S$. Let $s\in S$ be a closed
point. Then $X\times_{S} \widehat{\cO_{S,s}^{\sh}}$ is the 
N\'eron model of $X_K$ over $\widehat{\cO_{S,s}^{\sh}}$ 
by Theorem~\ref{pj-nm} and because {in both cases} the minimal proper regular
as well as the its smooth locus commute with strict henselization
and completion (\cite{Liu}, 9.3.28). Applying Lemma~\ref{af-nm-global-2}
to $U_K\subseteq X_{K, \sm}$, we see that 
$U_K$ has a N\'eron lft-model $U$ over $S$. 
\end{proof}
\end{subsection}

\begin{subsection}{Weil restriction} 

\begin{proposition}[see also \cite{BLR}, 10.1/4]\label{neron-weil} Let $S$ be a Dedekind scheme with
field of functions $K$ and let $X_K$ be a separated smooth connected
  curve over $K$. Let $K'/K$ be a finite extension, and let $S'$ be the normalization
  of $S$ in $K'$. Assume that 
  \begin{enumerate}[{\rm (i)}]
\item $S'\to S$ is finite ({\it e.g.}, if $S$ is excellent or $K'/K$ is
  separable); 
  \item $X_{K'}$ admits a N\'eron lft-model (resp. N\'eron model) $X'$ over $S'$;
\item any quasi-compact open subset of $X'$ is quasi-projective over $S'$. 
\end{enumerate}
Then $X_K$ admits also a N\'eron lft-model (resp. N\'eron model) over $S$.  
\end{proposition}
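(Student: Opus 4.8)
The plan is to realise $X_K$ as a closed subscheme of a suitable Weil restriction whose N\'eron lft-model we can write down explicitly, and then to invoke Proposition~\ref{2nd-app-of-fY-sm}. A first, preliminary point: by hypothesis~(i) the morphism $S'\to S$ is finite, and since $S'$ is the normalisation of the Dedekind scheme $S$ in the finite extension $K'/K$ it is again Dedekind, hence regular of dimension one; as the fibres of $S'\to S$ are zero-dimensional and $\dim S'=\dim S$, miracle flatness forces $S'\to S$ to be flat, so $S'\to S$ is finite locally free, with generic fibre $\Spec(K')\to\Spec(K)$.

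The heart of the construction is the Weil restriction $P:=\mathrm{Res}_{S'/S}(X')$. Hypothesis~(iii) is precisely the condition needed for this functor to be representable by a scheme (\cite{BLR}, 7.6/4; in the locally-finite-type case one glues the Weil restrictions of the quasi-compact open subschemes of $X'$, as in \cite{BLR}, 10.1/4). The standard properties of Weil restriction along a finite locally free morphism (\cite{BLR}, 7.6/2 and 7.6/5) then show that $P$ is separated and \emph{smooth} over $S$, locally of finite type (of finite type when $X'$ is, i.e.\ in the ``N\'eron model'' case), and that its formation commutes with base change; in particular $P_K=\mathrm{Res}_{K'/K}(X_{K'})$, a smooth separated $K$-scheme of finite type. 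The adjunction morphism $j_K\colon X_K\to P_K$ attached to $\mathrm{id}_{X_{K'}}$ is a monomorphism (faithfully flat descent along $\Spec(K')\to\Spec(K)$, using that $X_K$ is separated), and it is in fact a \emph{closed immersion}: over an affine open $\Spec(A)\subseteq X_K$ one has a closed immersion $\Spec(A)\hookrightarrow\mathbb A^n_K$, applying $\mathrm{Res}_{K'/K}$ turns it into a closed immersion into $\mathrm{Res}_{K'/K}(\mathbb A^n_{K'})=\mathbb A^{n[K':K]}_K$, and $j_K$ is compatible with the split closed immersion $\mathbb A^n_K\hookrightarrow\mathbb A^{n[K':K]}_K$ induced by $K\hookrightarrow K'$; hence $j_K$ is a closed immersion locally, so globally. (This is the one place where the possibly inseparable case needs care, and the computation above treats it uniformly.)

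Next I would check that $P$ is the $S$-N\'eron lft-model of $P_K$. For a smooth $S$-scheme $Y$, set $Y_{S'}:=Y\times_S S'$, a smooth $S'$-scheme with $(Y_{S'})_{K'}=Y_K\times_K\Spec(K')$. Adjunction for $\mathrm{Res}_{S'/S}$, the N\'eron mapping property of $X'$ over $S'$, and adjunction for $\mathrm{Res}_{K'/K}$ give natural bijections
\begin{align*}
\Mor_S(Y,P) &\xrightarrow{\ \sim\ } \Mor_{S'}(Y_{S'},X') \xrightarrow{\ \sim\ } \Mor_{K'}((Y_{S'})_{K'},X_{K'})\\
&= \Mor_{K'}(Y_K\times_K\Spec(K'),X_{K'}) \xrightarrow{\ \sim\ } \Mor_K(Y_K,P_K),
\end{align*}
all compatible with passage to generic fibres; hence $P$ is the $S$-N\'eron lft-model of $P_K$, and a N\'eron model when $P$ is of finite type. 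Consequently $X_K$ is a connected smooth closed subscheme of dimension one of the smooth separated finite-type $K$-scheme $P_K$, and $P_K$ carries the $S$-N\'eron lft-model (resp.\ N\'eron model) $P$, so Proposition~\ref{2nd-app-of-fY-sm} yields the desired N\'eron lft-model (resp.\ N\'eron model) of $X_K$. Since $X_K$ is connected, $j_K(X_K)$ meets only one connected component of $P_K$; as the argument of Proposition~\ref{2nd-app-of-fY-sm} uses connectedness only of the small curve, one may either disregard this or replace $P_K$ by that component, whose N\'eron lft-model is the corresponding open-and-closed part of $P$.

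The step I expect to be the main obstacle is the construction of $P=\mathrm{Res}_{S'/S}(X')$ \emph{as a scheme} in the locally-finite-type setting: this is where hypothesis~(iii) is genuinely needed, one has to glue the Weil restrictions of the quasi-compact opens coherently, and then verify that $P$ is smooth over $S$ at \emph{all} closed points of $S$ — including those where $S'\to S$ is ramified or has inseparable residue extension — and that every adjunction used above is compatible with base change to $\Spec(K)$ so that $P$ is honestly a model of $P_K$ and honestly satisfies the N\'eron mapping property. Everything after that is formal, the only other non-formal point being that $j_K$ is a closed immersion, handled above.
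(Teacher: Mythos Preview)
Your proposal is correct and follows essentially the same strategy as the paper: form the Weil restriction $\mathrm{Res}_{S'/S}(X')$, verify it is a N\'eron lft-model of its generic fibre via adjunction, observe that the unit map $X_K\to\mathrm{Res}_{K'/K}(X_{K'})$ is a closed immersion, and then invoke Proposition~\ref{2nd-app-of-fY-sm}. The only cosmetic difference is that the paper spells out how hypothesis~(iii) yields the actual representability criterion of \cite{BLR}, 7.6/4 (any finite subset of a fibre lies in an affine open), whereas you cite~(iii) as ``precisely'' that condition; this small deduction is the content of the paper's first paragraph.
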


\begin{proof} Let $s\in S$. Then any finite subset $F$ of $X'_{s}:=X'\times_S
\Spec k(s)$ is contained in an affine open subset of $X'$. Indeed, $F$
is contained in a quasi-compact open subset $W$ of $X'$. Let $V$ 
be an affine open neighborhood of $s$. As $S'\times_S V$ is finite over
$V$, $W\times_S V=W\times_{S'}(S'\times_S V)$ is quasi-projective over $V$, 
hence $F$ is contained in an affine open subset of $W\times_S V\subseteq X'$. 

The morphism $S'\rightarrow S$ is finite and locally
  free, hence the above property implies that the Weil restriction functor
  $Y:=\mathrm{Res}_{S'/S}X'$ is representable by a smooth $S$-scheme
  locally of finite type (\cite{BLR}, 7.6/4). Furthermore, by the
  functoriality of the Weil restriction, one checks easily
  that $Y$ is the $S$-N\'eron lft-model of its generic fiber $Y_K\cong
  \mathrm{Res}_{K'/K}(X_{K'})$. Finally, remark that as $X_K/K$ is
  separated, the adjunction map $X_K\rightarrow
  \mathrm{Res}_{K'/K}(X_{K'})=Y_K$ is a closed immersion, hence it
  suffices to apply Proposition~\ref{2nd-app-of-fY-sm} to conclude the existence
  of N\'eron lft-model or N\'eron model of $X_K$. 
\end{proof}

The following result is useful when we want to check the condition~(iii) of Proposition~\ref{neron-weil}. 

\begin{proposition} \label{quasi-pj}
{\rm ({Quasi-projectivity})} Let $S$ be a Dedekind scheme with field of functions $K$, and let $U$ be a connected regular relative curve over $S$ locally of finite type. Suppose that either $U_K$ has a smooth compactification or $S$ is excellent. Then any quasi-compact open subset 
of $U$ is quasi-projective over $S$ (in the sense of 
\cite{EGA}, II.5.3.1).   
\end{proposition}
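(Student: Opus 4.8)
The statement to prove is the quasi-projectivity of quasi-compact open subsets of a connected regular relative curve $U/S$ locally of finite type, under the assumption that $U_K$ has a smooth compactification or $S$ is excellent. Since quasi-projectivity over $S$ is local on $S$ (a morphism to $S$ is quasi-projective iff its restriction over each affine open is, by \cite{EGA}, II.5.3.4), and we may replace $U$ by a quasi-compact open subset, I would first reduce to the case $S=\Spec(R)$ affine and $U$ of finite type over $S$. The heart of the matter is then to produce an $S$-ample line bundle on $U$, or equivalently an immersion of $U$ into some $\mathbb P^n_S$ (composed with a projective $S$-scheme). The natural strategy is to compactify: embed $U$ as an open subscheme of a proper $S$-scheme $\overline U$, and then exhibit $\overline U$ as projective over $S$, since an open subscheme of a projective $S$-scheme is quasi-projective.

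\medskip
\noindent\textbf{Constructing the compactification.} First I would reduce to $U$ \emph{integral} (it is connected and regular, hence a disjoint union of integral schemes, but being connected it is integral). By Nagata's theorem there is an open immersion $U\hookrightarrow \overline U$ with $\overline U\to S$ proper; replacing $\overline U$ by its normalization (which is finite over $\overline U$ in the excellent case, or when $U_K$ has a smooth compactification one can instead take the given compactification of the generic fibre and spread out/glue), we may assume $\overline U$ is a normal proper relative curve over $S$ containing $U$ as a dense open. If $S$ is not local we do this over each member of a finite affine open cover, but since the question is local on $S$ this causes no trouble. Now $\overline U$ is a normal proper relative curve; by Remark~\ref{mini-desing} it admits a minimal desingularization $\overline U'\to \overline U$, which is again a proper regular relative curve over $S$ containing (a scheme isomorphic to) $U$ as an open subscheme — here one uses that $U$ is already regular so the desingularization does not modify it. Thus we are reduced to proving: a proper regular relative curve over an affine Dedekind scheme $S$ is projective over $S$.

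\medskip
\noindent\textbf{Projectivity of proper regular relative curves.} This is the classical statement that a regular curve, proper over a Dedekind base (or more generally over an affine noetherian base of dimension $\le 1$), is projective; it is proved for instance in \cite{Liu}, Chapter~9 (the relative curve is covered by finitely many fibral curves each of which is projective over the base by Serre's criterion applied fibrewise plus a standard argument producing an ample divisor supported away from finitely many points). Concretely, I would pick a closed point in each irreducible component of each closed fibre, avoiding the finitely many "bad" loci, move it to an $S$-relative effective Cartier divisor $D$ of $\overline U'$ meeting every fibral component, and check by the fibrewise criterion for ampleness (\cite{EGA}, III.4.7.1, together with the fact that a divisor meeting every component of a curve on a surface-like configuration is ample) that a suitable multiple of $\cO(D)$ is $S$-ample. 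Hence $\overline U'$ is projective over $S$, so $U$, being an open subscheme of $\overline U'$, is quasi-projective over $S$; composing with the reductions above gives the result in general.

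\medskip
\noindent\textbf{Main obstacle.} The delicate point is the spreading-out/gluing in the case where one only knows $U_K$ has a smooth compactification but $S$ is \emph{not} excellent: one must produce a global proper normal model $\overline U$ of $U$ over $S$ extending the given generic compactification and containing $U$, and ensure that its desingularization still exists and does not disturb $U$. The excellence hypothesis is exactly what makes normalization finite and desingularization available in general; when it fails, one leans on the a priori existence of the smooth compactification of $U_K$ to bootstrap. I expect this bookkeeping — choosing a Nagata compactification compatibly, normalizing, and matching it with $U$ on the generic fibre — to be the part requiring the most care, whereas the projectivity of a proper regular relative curve over an affine base is a citable classical fact.
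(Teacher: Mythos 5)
Your proposal follows essentially the same route as the paper: take a Nagata compactification of the quasi-compact open subset, use the hypothesis (excellence of $S$, or smoothness of the generic compactification) to desingularize it without touching the regular open piece, and then invoke Lichtenbaum's theorem that a regular proper relative curve over a Dedekind base is projective, via a horizontal divisor meeting every component of every fiber. The one step I would push back on is your opening reduction: quasi-projectivity in the sense of EGA~II.5.3.1 is \emph{not} local on the base in general (relative ampleness of a \emph{given} invertible sheaf is local by EGA~II.4.6.4, but the \emph{existence} of a relatively ample sheaf is not, and II.5.3.4 does not assert such locality). The paper avoids this by noting that Lichtenbaum's proof runs globally: one constructs a single horizontal Weil divisor $H$ on the regular proper model $Z$ meeting all irreducible components of all fibers, which is Cartier by regularity and fiberwise ample, hence relatively ample. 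Since your final step already describes exactly this global divisor construction, the affine reduction is both unjustified and unnecessary; simply delete it. Your treatment of the delicate non-excellent case (arranging a compactification with smooth generic fiber so that the minimal desingularization of Remark~\ref{mini-desing} applies) is as terse as the paper's own, but identifies the right issue.
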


\begin{proof} Let $U_0$ be a quasi-compact open subset of $U$. 
Let $U_0\subseteq U_0'$ be a Nagata compactification. 
The hypothesis on $U_K$ or $S$ implies
that there exists a desingularization morphism $Z\to U_0'$ 
which is an isomorphism {above} $U_0$. 
So $U_0$ is isomorphic to 
an open subscheme of a regular proper flat $S$-scheme $Z$. 
It is enough to show that $Z\to S$ is projective. 
This is a theorem of Lichtenbaum when $S$ is affine
(\cite{Lic}, Theorem 2.8 or \cite{Liu}, 8.3.16), but the
proof works exactly in the same way in the general case: find a
positive horizontal Weil divisor $H$ on $Z$ which meets all
irreducible components of all fibers of $Z\to S$. As $Z$ is regular, 
$H$ is defined by an invertible sheaf $\mathcal L$ on $Z$. The
hypothesis on $H$ implies that $\mathcal L$ is fiberwise
ample, hence $\mathcal L$ is relatively ample for $Z\to S$. 
\end{proof}

\begin{remark} If the Dedekind scheme $S$ is separated, then any quasi-projective 
scheme over $S$ is a subscheme of some $\mathbb P^N_S$. 
Indeed, $S$ then has an invertible ample sheaf 
(\cite[\href{http://stacks.math.columbia.edu/tag/09NZ}{Proposition
09NZ}]{Stacks}), and one can conclude with \cite{EGA}, II.5.3.3.  
\end{remark}
\end{subsection}

\begin{subsection}{Affine open subsets of a conic} \label{rat-curves} 

In this Subsection, we discuss the existence of N\'eron lft-models of an affine open subscheme $U_K$ of a smooth projective conic $C_K/K$. 
Observe first that $\mathbb A_K^1$ 
does not admit N\'eron lft-model over $S$ (\cite{BLR}, 10.1/8). 

\begin{proposition}\label{nm-rat-gl-main} Let $S$ be an excellent Dedekind scheme with field of functions $K$. Let 
$U_K$ be an affine open subscheme of a smooth projective conic $C_K$ over $K$.
Suppose $U_K$ is not isomorphic to $\mathbb A_K^1$. Then $U_K$ admits a N\'eron 
lft-model $U$ over $S$. 
\end{proposition}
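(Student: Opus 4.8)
The plan is to reduce the general case to the situation already handled in Proposition~\ref{prop-rat-nm2} by means of a Weil restriction argument, exactly as in the last paragraph of the proof of that proposition. First I would treat the two extreme cases separately. If $C_K(K^{\sh})=\emptyset$ for the (unique) closed point, or more generally if $\Delta_K:=C_K\setminus U_K$ already consists of points which are ramified of degree $>1$ over $K$, then Proposition~\ref{prop-rat-nm2} applies directly and we are done. If instead $C_K$ has a rational point, so $C_K\cong\mathbb P^1_K$, then either $\Delta_K\ne\emptyset$ (again Proposition~\ref{prop-rat-nm2}, noting that $U_K\not\cong\mathbb A^1_K$ forces either $\Delta_K$ to have degree $\ge 2$ or $\Delta_K(K^{\sh})=\emptyset$ after a harmless check), or $\Delta_K=\emptyset$, i.e. $U_K=\mathbb P^1_K$, which is excluded since $\mathbb P^1_K$ has no N\'eron lft-model (Proposition~\ref{nm-conic}); but that case is ruled out by hypothesis once we observe $U_K$ is affine, hence $\Delta_K\ne\emptyset$.

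The core of the argument is the remaining case where $\Delta_K$ contains a point that becomes rational after an unramified base change. Choose a finite \emph{unramified} extension $K'/K$ splitting enough of $\Delta_K$ so that over $K'$ we are back in a situation governed by Proposition~\ref{prop-rat-nm2}: concretely, pass to a $K'$ over which $C_{K'}\cong\mathbb P^1_{K'}$, and note that since $K'/K$ is unramified the complement $C_{K'}\setminus U_{K'}$ still consists of closed points that are ramified of degree $>1$ over $K'$ wherever they were over $K$ — in particular $\Delta_{K'}(K'^{\sh})=\emptyset$ — and $U_{K'}\not\cong\mathbb A^1_{K'}$. Let $S'$ be the normalization of $S$ in $K'$, which is semi-local (and finite over $S$, since $S$ is excellent). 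By Proposition~\ref{prop-rat-nm2} applied at each closed point of $S'$, together with Corollary~\ref{neron-l2g}, $U_{K'}$ admits a N\'eron model $U'$ over $S'$.

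It then remains to descend from $S'$ to $S$. For this I would invoke Proposition~\ref{neron-weil}: condition (i) holds because $S$ is excellent, condition (ii) is the N\'eron model $U'$ just produced, and condition (iii) — that every quasi-compact open subset of $U'$ is quasi-projective over $S'$ — follows from Proposition~\ref{quasi-pj}, since $U'$ is a regular relative curve over the excellent scheme $S'$. Hence $U_K$ admits a N\'eron lft-model over $S$, and since the Weil restriction of a finite-type scheme along the finite locally free morphism $S'\to S$ is again of finite type and $U'$ is of finite type by Proposition~\ref{prop-rat-nm2}, the resulting model is in fact a N\'eron model. The one point requiring care — the main obstacle — is the bookkeeping showing that after an \emph{unramified} extension $K'/K$ the deleted points remain genuinely ramified of degree $>1$ (so that Proposition~\ref{prop-rat-nm2} really applies over $S'$ and $U_{K'}\not\cong\mathbb A^1_{K'}$), since a careless extension could make a deleted point rational and turn $U_{K'}$ into an affine line; choosing $K'$ to split $C_K$ but to be unramified over $S$ is exactly what prevents this.
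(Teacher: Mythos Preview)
Your proposal has a genuine gap: you are implicitly treating $S$ as local, whereas Proposition~\ref{nm-rat-gl-main} concerns an arbitrary excellent Dedekind scheme. Proposition~\ref{prop-rat-nm2} is stated and proved only for \emph{local} $S$, so ``applying it at each closed point of $S'$'' gives you local N\'eron models $U'(s')$ over each $\Spec(\cO_{S',s'})$, not a model over $S'$. Corollary~\ref{neron-l2g} does not repair this: it only says that a \emph{given} $S'$-scheme is a N\'eron lft-model iff it is so locally; it does not manufacture a global candidate from local ones. The paper's Lemma~\ref{af-nm-global} is precisely the gluing device needed, and it requires the auxiliary datum of a finite-type model $U^0$ into which all the local N\'eron models embed openly --- a nontrivial condition that the paper verifies via Remark~\ref{comp-to-prop-rat-nm2} (regularity of the closure of $\Delta_K$ in $\mathbb P^1_S$ over a dense open of $S$). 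Your phrase ``$S'$ is semi-local'' and the reference to ``the (unique) closed point'' confirm the local/global confusion.

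There is a second structural problem. Your reduction aims to land in the hypothesis $\Delta_{K'}((K')_{s'}^{\sh})=\emptyset$ of Proposition~\ref{prop-rat-nm2} by passing to an extension that is \emph{unramified at every closed point of $S$}. But if $\Delta_K$ contains a point that is separable over $K$ --- equivalently $\Delta_K(K_s^{\sh})\neq\emptyset$ for almost all $s$ --- then no unramified extension will make that point disappear from $\Delta_{K'}((K')_{s'}^{\sh})$; it will only make it rational, and then Proposition~\ref{prop-rat-nm2} does not apply (and $U_{K'}$ may well become $\mathbb A^1_{K'}$ minus finitely many rational points). The paper handles this case by an entirely different route: over $\overline K$ the complement has $\ge 2$ points, so after a finite extension $U_{K'}$ is open in $\mathbb G_{m,K'}$; the global N\'eron lft-model of $\mathbb G_{m,K'}$ (which exists and is compatible with index-$1$ extensions) then feeds into Lemma~\ref{af-nm-global-2} and Theorem~\ref{nm-open-st} to produce the model. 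Your Weil-restriction strategy is essentially what the paper does in the complementary case where $\Delta_{\overline K}$ is a single (purely inseparable) point, but even there the globalization step via Lemma~\ref{af-nm-global} is missing from your outline.
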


\begin{proof} If over an algebraic closure $\overline{K}$ of $K$, 
${C}_{\overline{K}}\setminus U_{\overline{K}}$ contains at least
two points, then there exists a finite extension $K'/K$ such that 
$U_{K'}$ is isomorphic to an open subscheme of $\mathbb G_{m,K'}$. It
follows from Proposition~\ref{neron-weil} that we can suppose $U_K$ is an open
subscheme of $\mathbb G_{m, K}$. The latter has 
 a N\'eron lft-model $G$ over $S$, 
locally on $S$ compatible with any index $1$ extension 
(see the construction of \cite{BLR}, 10.1/5). 
Consequently, by Lemma~\ref{af-nm-global-2}, $U_K$ admits a N\'eron lft-model over $S$. For the rest of the proof, we can therefore suppose that $U_{\overline{K}}$ is
${C}_{\overline{K}}$ minus one point. 
So $\Delta_K:=C_K\setminus U_K$ consists of a single point $q_{\infty}$ which is purely inseparable of degree $> 1$ over $K$ because $U_K\not\cong \mathbb A^1_K$.  

As $C_K$ is smooth over $K$, there exists a separable extension $K'/K$ such that $C_{K'}\cong \mathbb P^1_{K'}$.
The point of $C_{K'}\setminus U_{K'}$ is still purely inseparable of degree $>1$ over $K'$
because $K'/K$ is separable. Using Proposition~\ref{neron-weil}, we can reduce to the case
$C_K=\mathbb P^1_K$. Let $P\cong \mathbb P^1_S$ be a smooth proper model of $\mathbb P^1_K$ over $S$, 
and let $\Delta=\overline{\{q_{\infty}\}}\subset P$. We know (Proposition~\ref{prop-rat-nm2})
that for all closed points $s\in S$, $U_K$ admits a N\'eron model $U(s)$ over $\cO_{S,s}$. 
To find a global N\'eron lft-model, it is enough to show that for $U^0:=P\setminus \Delta$,
the canonical morphism 
\begin{equation}\label{map-s} 
U^0\times_S \Spec(\cO_{S,s})\to U(s)
\end{equation}
is an open immersion for all {$s$ contained in a dense open subset $V\subset S$: the base change $U^0\times_S V$ satisfies then Condition~(ii) of Lemma~\ref{af-nm-global}.} As $S$ is excellent, so is $\Delta$. Thus the regular locus of $\Delta$ is 
open in $\Delta$. 
Shrinking $S$ if necessary, we can assume $\Delta$ is regular. Then 
for any closed point $s\in S$, the morphism \eqref{map-s} is an open immersion
by Proposition~\ref{prop-rat-nm2} and Remark~\ref{comp-to-prop-rat-nm2}, and 
the proposition is proved. 
\end{proof}

\begin{corollary}\label{app-conj-BLR} Let $S$ be an excellent Dedekind scheme of characteristic $p>0$, with $K$ its field of functions. Let $G_K$ be a connected smooth $K$-wound unipotent group of dimension $1$. Then $G_K$ admits a N\'eron lft-model over $S$. 
\end{corollary}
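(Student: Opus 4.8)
The plan is to realize $G_K$ as an affine open subscheme of a smooth projective conic over $K$ that is not isomorphic to $\mathbb{A}^1_K$, and then to quote Proposition~\ref{nm-rat-gl-main}. The entire content of the corollary is this identification; once it is in place the existence of the N\'eron lft-model is immediate.

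First I would set up the geometry. Since $G_K$ is a connected smooth unipotent $K$-group of dimension one, it is affine, geometrically connected, and $G_K\times_{\Spec K}\Spec\overline K\cong\mathbb{G}_{a,\overline K}\cong\mathbb{A}^1_{\overline K}$. Thus $G_K$ is a smooth, geometrically connected affine $K$-curve, $K$ is algebraically closed in its function field $F:=K(G_K)$, and $F/K$ is separable because $G_K$ is smooth over $K$. Let $C_K$ be the regular projective model of $F$ over $K$; as $G_K$ is regular it is canonically a dense open subscheme of $C_K$, and the separability of $F/K$ forces $C_K$ to be smooth over $K$. Its base change to $\overline K$ is the smooth compactification $\mathbb{P}^1_{\overline K}$ of $\mathbb{A}^1_{\overline K}$, so $C_K$ is a smooth projective curve of genus $0$; since the identity section gives $e\in G_K(K)\subseteq C_K(K)$, a smooth projective genus-$0$ curve over $K$ with a rational point is isomorphic to $\mathbb{P}^1_K$, whence $C_K\cong\mathbb{P}^1_K$, a smooth projective conic. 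The boundary $\Delta_K:=C_K\setminus G_K$ is, over $\overline K$, the one point $\mathbb{P}^1_{\overline K}\setminus\mathbb{A}^1_{\overline K}$, so with its reduced structure $\Delta_K$ is a single closed point $x_\infty$ whose residue field $K(x_\infty)$ is purely inseparable over $K$. If $x_\infty$ were $K$-rational we would get $G_K\cong\mathbb{P}^1_K\setminus\{x_\infty\}\cong\mathbb{A}^1_K$ as $K$-schemes, so that the identity morphism would be a non-constant $K$-morphism $\mathbb{A}^1_K\to G_K$, contradicting the hypothesis that $G_K$ is $K$-wound. Hence $[K(x_\infty):K]$ is a power of $p$ greater than $1$, and in particular $G_K\not\cong\mathbb{A}^1_K$; note that $x_\infty$ then remains a single non-reduced point after the separable base change $\cO_{S,s}\to\cO_{S,s}^{\sh}$, so that $\Delta_K(K_s^{\sh})=\emptyset$ for every closed point $s\in S$, consistently with Proposition~\ref{prop-rat-nm2}.

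It then remains only to observe that $S$ is excellent, that $G_K$ is an affine open subscheme of the smooth projective conic $C_K\cong\mathbb{P}^1_K$, and that $G_K\not\cong\mathbb{A}^1_K$; Proposition~\ref{nm-rat-gl-main} applies verbatim and yields a N\'eron lft-model of $G_K$ over $S$. There is no analytic difficulty in the argument: the one point that needs care is the structural input --- that the regular compactification of a one-dimensional smooth connected unipotent $K$-group is the smooth conic $\mathbb{P}^1_K$ (using that $F/K$ is separable, that the curve has genus $0$, and that $G_K$ has the rational point $e$), and that woundness is precisely the failure of $G_K$ to be $\mathbb{A}^1_K$ as a $K$-scheme --- after which the corollary drops out of Proposition~\ref{nm-rat-gl-main}.
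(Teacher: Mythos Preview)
Your argument contains a genuine error: the assertion that ``the separability of $F/K$ forces $C_K$ to be smooth over $K$'' is false. Separability of the function field guarantees only that $C_{\overline K}$ is \emph{reduced}, not that it is regular; the regular compactification of a smooth affine curve can acquire singularities at the boundary after base change to $\overline K$, precisely when the points at infinity have inseparable residue field. Concretely, for $p>2$ take $K=\mathbb F_p(t)$ and the wound group $G_K$ defined by $y^p=x+tx^p$. In the chart $u=1/x$, $v=y/x$ the compactification is $v^p-t=u^{p-1}$; the unique point at infinity has residue field $K(t^{1/p})$, the curve is regular there (the maximal ideal is generated by $u$), but over $\overline K$ one gets $w^p=u^{p-1}$ with $w=v-t^{1/p}$, which is singular at the origin. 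Thus $C_K$ is a regular projective curve of \emph{positive} arithmetic genus, not a conic, and Proposition~\ref{nm-rat-gl-main} does not apply.

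The fix is to split into two cases according to the arithmetic genus of the regular compactification $C_K$. If $p_a(C_K)\ge 1$, the open subscheme $G_K\subseteq C_K$ lies in the smooth locus of $C_K$ (since $G_K$ is smooth), and Proposition~\ref{af-pg} gives the N\'eron lft-model. If $p_a(C_K)=0$, then $C_K$ is a smooth conic (indeed $\mathbb P^1_K$, using the rational point $e$), and your argument via Proposition~\ref{nm-rat-gl-main} goes through; the woundness step showing $G_K\not\cong\mathbb A^1_K$ is correct. This is exactly how the paper treats the corollary: the Remark following it explains that the genus $\ge 1$ case was already known (\cite{BLR}, 10.3/5), while the genus $0$ case --- which in fact occurs only in characteristic $2$ --- is the new contribution supplied by Proposition~\ref{nm-rat-gl-main}.
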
 

\begin{remark} Let $S$ be a Dedekind scheme with field of functions
$K$. One can deduce from Proposition~\ref{nm-conic} and
Proposition~\ref{2nd-app-of-fY-sm} that if $X_K$ is a connected separated smooth
$K$-variety admitting N\'eron lft-model over $S$, then $X_K$ does not
contain any closed subscheme isomorphic to $\mathbb{P}_{K}^{1}$ or
$\mathbb{A}_{K}^{1}$. Conversely, if $X_K$ is the underlying scheme of a smooth commutative algebraic group over $K$, the latter condition is
also sufficient for the existence of N\'eron lft-model when $S$ is
local and excellent (\cite{BLR}, 10.2/2). 

When $S$ is global and excellent, whether this latter condition is
sufficient is still an open question. 
It is conjectured (\cite{BLR}, 10.3, Conjecture I)
that the answer is yes.  Some positive examples
are known in \cite{BLR}, Chap. 10. 
Corollary~\ref{app-conj-BLR} provides some evidence 
in favor of this conjecture. 
Together with the well-known
  results for abelian varieties and for tori, we deduce that when $S$
  is excellent, any
  smooth connected $K$-algebraic group $G_K$ of dimension one admits a
  N\'eron lft-model over $S$ if and only if $G_K$ is not isomorphic to
  $\mathbb G_{a,K}$. In other words, Conjecture I of \cite{BLR}, 10.3
  holds when $\dim(G_K)=1$. When the unipotent
  group scheme $G_K$ in Corollary~\ref{app-conj-BLR} admits a regular
  compactification of genus $\geq 1$, or equivalently when
  $\mathrm{uni}(G_K)=0$, Corollary~\ref{app-conj-BLR} is a special
  case of \cite{BLR}, 10.3/5. So the new case provided here  is when
  $\mathrm{uni}(G_{K})>0$, or equivalently when $G_K$ admits a smooth
  compactification of genus $0$. The latter happens only when
  $\mathrm{char}(K)=2$ (see last paragraph of \cite{BLR}, 10.3,
  p. 316). In this case, $G_K$ is the subgroup of
  $\mathbb{G}_{a,K}^2=\mathrm{Spec}(K[X,Y])$ defined by the equation
  $X^2=Y+aY^2$ for some $a\in K\setminus K^2$, which, as a scheme, is isomorphic to $\mathrm{Proj}(K[T,T'])\setminus V_{+}(T^2-aT'^2)$.  
\end{remark}
\end{subsection}

\begin{subsection}{N\'eron lft-models for affine curves}
We are now in the position to prove the existence of N\'eron lft-models for affine curves.

\begin{theorem}\label{nm-af} Let $S$ be an excellent Dedekind scheme with field of functions $K$. Let $U_K$ be an affine smooth connected curve over $K$. Then $U_K$  
admits a N\'eron lft-model over $S$ if $U_K\not\cong 
\mathbb A^1_{L}$ for any finite extension $L/K$. 
\end{theorem}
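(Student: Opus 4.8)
The plan is to reduce Theorem~\ref{nm-af} to the two cases already treated in the paper: affine opens of curves of positive arithmetic genus, and affine opens of conics. First I would compactify: let $C_K$ be a regular proper curve over $K$ containing $U_K$ as a dense open subscheme, obtained by taking a Nagata compactification of $U_K/K$ and then normalizing (or resolving singularities, $S$ being excellent so $K$ need not be perfect but we only need $C_K$ regular over $K$ as a scheme, not smooth). Since $U_K$ is smooth over $K$, it is contained in the smooth locus $C_{K,\sm}$ of $C_K/K$. Now split according to the arithmetic genus $p_a(C_K)=\dim H^1(C_K,\cO_{C_K})$.

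If $p_a(C_K)\ge 1$, then Proposition~\ref{af-pg} applies directly (with $S$ excellent): $U_K\subseteq C_{K,\sm}$ is a dense open subscheme of a connected regular proper curve of arithmetic genus $\ge 1$, so it admits a N\'eron lft-model over $S$. If $p_a(C_K)=0$, then I first have to upgrade $C_K$ to a smooth conic. Passing to the field of constants $K'=K(C_K)\cap\overline{K}$ and invoking Lemma~\ref{connectedness}, I may assume $C_K$ is geometrically connected over $K$; then a geometrically connected regular proper curve of arithmetic genus $0$ over $K$ is a smooth projective conic (its anticanonical sheaf gives an embedding as a degree-$2$ curve in $\mathbb P^2_K$). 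Strictly, I should be a little careful that the field-of-constants reduction in Lemma~\ref{connectedness} does not damage the hypothesis $U_K\not\cong\mathbb A^1_L$; but if $U_{K'}\cong\mathbb A^1_{L'}$ for some finite $L'/K'$ then $L'/K$ is finite and $U_K\cong\mathbb A^1_{L'}$, contradicting the hypothesis, so this reduction is legitimate. Once $C_K$ is a smooth conic and $U_K$ is an affine open subscheme of it not isomorphic to $\mathbb A^1_K$ (nor, after the reduction, to $\mathbb A^1_{K'}$), Proposition~\ref{nm-rat-gl-main} gives the N\'eron lft-model over $S$.

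The remaining point is to see that every affine smooth connected curve $U_K$ over $K$ with $U_K\not\cong\mathbb A^1_L$ for all finite $L/K$ falls into exactly one of these two cases, and that in the genus-$0$ case $U_K$ really is affine open in a \emph{conic} (not just in $\mathbb P^1$ over an extension) so that Proposition~\ref{nm-rat-gl-main} applies verbatim. Here I note: an affine curve is not proper, so $U_K\subsetneq C_K$ and the boundary $\Delta_K:=C_K\setminus U_K$ is a nonempty finite set of closed points of $C_K$; when $p_a(C_K)=0$ and $C_K$ is the conic, $U_K=C_K\setminus\Delta_K$ is an affine open subscheme of a smooth projective conic, which is precisely the input of Proposition~\ref{nm-rat-gl-main}. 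The hypothesis $U_K\not\cong\mathbb A^1_L$ is exactly what rules out the one forbidden case ($C_K\cong\mathbb P^1$ over some extension with $\Delta$ a single rational point of that $\mathbb P^1$, giving $U_K\cong\mathbb A^1$), as reflected in the statement of Proposition~\ref{nm-rat-gl-main}.

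The main obstacle I anticipate is the bookkeeping around the field of constants and Weil restriction in the genus-$0$ case: one must make sure that the passage from $K$ to $K'$ (field of constants) and then, inside Proposition~\ref{nm-rat-gl-main}'s proof, to a further finite separable extension trivializing the conic, is compatible with the hypothesis and with Lemma~\ref{connectedness}~(2) and Proposition~\ref{neron-weil} (finiteness of $S'\to S$ holds since $S$ is excellent, and quasi-projectivity of quasi-compact opens of the relevant models holds by Proposition~\ref{quasi-pj}). The genus-$\ge 1$ case is essentially immediate from Proposition~\ref{af-pg}. So the proof is short: reduce to geometrically connected via Lemma~\ref{connectedness}, compactify to a regular proper $C_K$ with $U_K\subseteq C_{K,\sm}$, and apply Proposition~\ref{af-pg} if $p_a(C_K)\ge 1$ or Proposition~\ref{nm-rat-gl-main} if $p_a(C_K)=0$.
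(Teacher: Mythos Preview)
Your proposal is correct and follows essentially the same approach as the paper's proof: reduce via Lemma~\ref{connectedness} to the geometrically connected case, compactify to a regular proper curve $C_K$, and then invoke Proposition~\ref{af-pg} when $p_a(C_K)\ge 1$ and Proposition~\ref{nm-rat-gl-main} when $p_a(C_K)=0$. The only cosmetic difference is ordering: the paper applies Lemma~\ref{connectedness} once at the start, whereas you compactify first and invoke it only in the genus-$0$ branch; since Proposition~\ref{af-pg} does not require geometric connectedness, both orderings work, and your extra care in checking that $C_K$ is a smooth (not merely regular) conic and that the hypothesis $U_K\not\cong\mathbb A^1_L$ survives the field-of-constants reduction is welcome but not a departure from the paper's argument.
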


\begin{proof} By Lemma~\ref{connectedness}, we can suppose $U_K$
is  geometrically connected. If the regular compactification of
$U$ has positive arithmetic genus, then $U_K$ has a N\'eron 
lft-model by Proposition~\ref{af-pg}. Otherwise, $U_K$ is an affine open subset of a smooth projective conic over $K$, not isomorphic to $\mathbb A^1_K$. 
So $U_K$ admits a N\'eron lft-model over $S$ by Proposition~\ref{nm-rat-gl-main}.  
\end{proof}

Next we examine when the N\'eron lft-model is of finite type. 

\begin{proposition} \label{af-nm-ft} Let $S$ be an excellent  
Dedekind scheme with field of functions $K$. Let $U_K$ be an affine smooth 
geometrically connected curve of $K$, not isomorphic to $\mathbb A_K^1$, 
and let $C_K$ be its regular compactification. Denote by 
$\Delta_K:=C_K\setminus U_K$. 
Let $C$ be a relatively minimal regular model of $C_K$ over $S$ and let 
$\Delta$ be the reduced Zariski closure of $\Delta_K$ in $C$. Let $U$
be the N\'eron lft-model of $U_K$ over $S$. 
Then the following properties are true. 

\begin{enumerate}[{\rm (1)}] 
\item The scheme $U/S$ is of 
finite type if and only if $\Delta_K(K^{\sh}_s)=\emptyset$ for all
closed points $s\in S$ and if 
$\Delta_s\cap C_{\sm,s}(k(s)^{\mathrm{sep}})=\emptyset$ for almost all $s\in S$.
\item Assume $S$ is infinite. For each closed point $s \in S$, set  $U(s):=U\times_S \mathrm{Spec}(\cO_{S,s})$,  the local N\'eron lft-model of $U_K$ over $\mathrm{Spec}(\cO_{S,s})$.  Let $K^{\sep}$ denote a separable closure of $K$. 
\begin{enumerate}[{\rm (i)}]
\item $\Delta_K(K^{\sep})=\emptyset$ if and only if all the local N\'eron lft-models $U(s)$ are of finite type. 
\item If $\Delta_K(K^{\sep})\neq \emptyset$, the local N\'eron lft-models $U(s)$ are not of finite type for all but finitely many closed points $s\in S$. In particular,
if $K$ has characteristic $0$, then $U$ is never
of finite type.
\end{enumerate}
\end{enumerate}
\end{proposition}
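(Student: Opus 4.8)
The plan is to argue locally on $S$ and then globalize. Recall that $U(s):=U\times_S\Spec(\cO_{S,s})$ is the N\'eron lft-model of $U_K$ over $\cO_{S,s}$ (Proposition~\ref{nm-gen}), that $\Delta_K\neq\emptyset$ since a proper curve is not affine, and that $U_K\subseteq C_{K,\sm}$ because $U_K/K$ is smooth, so that the smooth locus $C_{\sm}$ of $C/S$ gives a finite‑type smooth $S$‑model $C_{\sm}\setminus\Delta$ of $U_K$ (here $\Delta$ is the reduced Zariski closure of $\Delta_K$ in $C$). Exactly one of the following holds: $p_a(C_K)\ge 1$, in which case $C$ is the minimal proper regular model of $C_K$ and, by Theorem~\ref{pj-nm}, $C_{\sm}\times_S\widehat{\cO_{S,s}^{\sh}}$ is the N\'eron model of $C_{\widehat{K_s^{\sh}}}$, so the hypothesis of Theorem~\ref{nm-open-st} is met with $X=C_{\sm}\times_S\Spec(\cO_{S,s})$; or $p_a(C_K)=0$, in which case $C_K$ is a smooth conic and one works instead with the results of \S\ref{rat-nm}. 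From these inputs I would extract two local statements. \emph{Local criterion:} for each closed $s\in S$, $U(s)$ is of finite type over $\cO_{S,s}$ if and only if $\Delta_K(K_s^{\sh})=\emptyset$; when $p_a(C_K)\ge1$ this is Theorem~\ref{nm-open-st}(1) together with the excellence of $\cO_{S,s}$ (which makes the two conditions there coincide), and when $p_a(C_K)=0$ it follows from Proposition~\ref{prop-rat-nm2} and Lemma~\ref{af-pre}(\ref{af-ft}). \emph{Comparison:} the canonical $S$‑morphism $j\colon C_{\sm}\setminus\Delta\to U$ extending $\mathrm{id}_{U_K}$ (which exists by the N\'eron property of $U$) is an open immersion — it becomes one after the faithfully flat base change $\coprod_{s}\Spec(\cO_{S,s})\to S$ by Theorem~\ref{nm-open-st}(2), resp.\ its \S\ref{rat-nm}\nobreakdash-analogue (each $\Spec(\cO_{S,s})$ contains the generic point of $S$), hence, being a flat monomorphism locally of finite presentation, is one over $S$ — and $j\times_S\Spec(\cO_{S,s})$ is surjective if and only if $\Delta_s\cap C_{\sm,s}(k(s)^{\sep})=\emptyset$, by Theorem~\ref{nm-open-st}(3), resp.\ Remark~\ref{comp-to-prop-rat-nm2}.

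For Part~(1), assume first that both displayed conditions hold. By the local criterion every $U(s)$ is of finite type. Let $V\subseteq S$ be the dense open on which $\Delta_s\cap C_{\sm,s}(k(s)^{\sep})=\emptyset$; by the comparison, for $v\in V$ the base change $(C_{\sm}\setminus\Delta)\times_S\Spec(\cO_{S,v})$ is $U(v)$, the N\'eron lft‑model of $U_K$ over $\cO_{S,v}$, so by Corollary~\ref{neron-l2g} and uniqueness $(C_{\sm}\setminus\Delta)\times_S V\cong U\times_S V$, which is therefore of finite type, in particular quasi‑compact. Covering $U\times_S V$ by finitely many affine opens of $U$ and adjoining, for each of the finitely many closed points $s\in S\setminus V$, a quasi‑compact open of $U$ containing the quasi‑compact fibre $U_s$, we cover $U$ by finitely many quasi‑compact opens; as $U/S$ is locally of finite type, it is of finite type. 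Conversely, if $U/S$ is of finite type then so is each $U(s)=U\times_S\Spec(\cO_{S,s})$, giving the first condition by the local criterion; and $Z:=U\setminus j(C_{\sm}\setminus\Delta)$ is a closed subset of finite type over the noetherian $S$ with empty generic fibre, so each of its finitely many irreducible components is supported over a closed point, whence $Z$ lies over a finite set of closed points. Since $Z_s\neq\emptyset$ whenever $j\times_S\Spec(\cO_{S,s})$ fails to be surjective, the comparison shows $\Delta_s\cap C_{\sm,s}(k(s)^{\sep})\neq\emptyset$ for only finitely many $s$, which is the second condition.

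For Part~(2), by the local criterion all the $U(s)$ are of finite type exactly when $\Delta_K(K_s^{\sh})=\emptyset$ for every closed $s$. Since $\cO_{S,s}^{\sh}$ is ind‑\'etale over $\cO_{S,s}$, a $K_s^{\sh}$‑point of a closed point $q\in\Delta_K$ forces $\kappa(q)/K$ separable, i.e.\ $q\in\Delta_K(K^{\sep})$; conversely if $q\in\Delta_K(K^{\sep})$, write $\kappa(q)=K[t]/(f)$ with $f$ separable, and observe that for all but finitely many $s$ one has $f\in\cO_{S,s}[t]$ with $\mathrm{disc}(f)\in\cO_{S,s}^{\times}$, so $\bar f\in k(s)[t]$ is separable of degree $\deg f$ and, $\cO_{S,s}^{\sh}$ being strictly henselian with residue field $k(s)^{\sep}$, $f$ acquires a root in $\cO_{S,s}^{\sh}$, i.e.\ $q\in\Delta_K(K_s^{\sh})$; as $S$ is infinite, such $s$ exist. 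Hence $\Delta_K(K^{\sep})=\emptyset$ if and only if $\Delta_K(K_s^{\sh})=\emptyset$ for all $s$, which is (i); and if $\Delta_K(K^{\sep})\neq\emptyset$ the argument shows $\Delta_K(K_s^{\sh})\neq\emptyset$, hence $U(s)$ is not of finite type, for all but finitely many $s$, which is (ii). In characteristic $0$ every finite extension of $K$ is separable and $\Delta_K\neq\emptyset$, so $\Delta_K(K^{\sep})\neq\emptyset$ and infinitely many $U(s)$ fail to be of finite type; were $U/S$ of finite type, all $U(s)$ would be, a contradiction.

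The step I expect to be the main obstacle is the rational case $p_a(C_K)=0$: there Theorem~\ref{nm-open-st} is not directly applicable, since a rational $C_{K,\sm}$ need not admit a N\'eron model and the hypothesis of that theorem can fail, so the local criterion and the comparison must instead be read off from the explicit semistable‑model constructions of \S\ref{rat-nm} (and Lemma~\ref{af-pre}(\ref{af-ft})); a secondary difficulty is the globalization itself — promoting $j$ to an open immersion without a quasi‑compactness assumption and localizing the excess closed set $Z$, which is precisely the point at which the second condition in~(1) enters.
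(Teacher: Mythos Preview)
Your overall strategy matches the paper's: establish the local criterion ``$U(s)$ of finite type $\Leftrightarrow\Delta_K(K^{\sh}_s)=\emptyset$'', compare $U$ with $C_{\sm}\setminus\Delta$, then globalize. Your treatment of Part~(2) and of the case $p_a(C_K)\ge1$ is essentially the paper's. The globalization differs in packaging: the paper converts ``finite type over a dense open of $S$'' into ``$U_s$ connected for almost all $s$'' via Proposition~\ref{nm-ft}(2)--(3), and then links connectedness of $U_s$ to the condition on $\Delta_s$ through the explicit description of $U$ in \S\ref{construct-U}; your argument via the global open immersion $j$ and the closed excess set $Z=U\setminus j(C_{\sm}\setminus\Delta)$ is a valid and arguably cleaner alternative.

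The genuine gap is precisely the one you flag, and your citations do not close it. Lemma~\ref{af-pre}(\ref{af-ft}) concerns the scheme produced by the process of \S\ref{construct-U} from a given smooth $X$ satisfying the hypothesis of Theorem~\ref{nm-open-st}; when $C_K$ is a conic no such $X$ is available, so you have no identification of $U(s)$ with that construction. The paper closes this by a short dichotomy after passing to $K^{\sh}_s$ (where $C$ becomes $\mathbb P^1$ and $\Delta$ acquires a rational point): either $\Delta$ has two rational points, so $U$ is open in $\mathbb G_{m}$ and Theorem~\ref{nm-open-st}(1) applies with $X$ the (non-finite-type) N\'eron lft-model of $\mathbb G_m$; or $\Delta$ still contains a non-rational point $q_\infty$, and Theorem~\ref{nm-open-st}(1) applies with $X_K=C_K\setminus\{q_\infty\}$, which has a N\'eron model by Proposition~\ref{prop-rat-nm2}. (The case $\Delta_{K^{\sh}_s}=\{\text{one rational point}\}$ is excluded because it would force $U_{K^{\sh}_s}\cong\mathbb A^1$, contradicting the existence of $U$ via Proposition~\ref{nm-gen}(2).) Likewise, Remark~\ref{comp-to-prop-rat-nm2} only yields an open immersion $C\setminus\Delta\hookrightarrow U$ under the extra hypotheses $C_K\cong\mathbb P^1_K$ and $\Delta$ regular, and says nothing about surjectivity on fibres. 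For that part the paper shrinks $S$ until $C/S$ is smooth with connected fibres, invokes the Claim in the proof of Proposition~\ref{prop-rat-nm2} to get $U=C\setminus\Delta$ over $\cO_{S,s}$ when $\Delta_s\cap C_s(k(s)^{\sep})=\emptyset$, and for the converse exhibits a $K^{\sh}_s$-point of $U_K$ (specializing to a point of $\Delta_s\cap C_s(k(s)^{\sep})$) that cannot extend into $C\setminus\Delta$, contradicting the N\'eron mapping property.
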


\begin{proof} (1) {The $S$-scheme} $U$ is of finite type if and only if 
\begin{enumerate}[{{\hskip 12pt} \rm (a)}] 
\item for all closed point $s\in S$, $U(s)$ is of finite type 
over $\cO_{S,s}$; and if 
\item $U$ is of finite type over some open dense subset of $S$, or equivalently, $U_s$ is
connected for all but finitely many $s$ (Proposition~\ref{nm-ft}). 
\end{enumerate}
{Therefore we only need to show that the conditions of (1) are equivalent to the conditions (a) and (b) above.}

First assume that $C_K$ is of arithmetic genus $>0$. Then 
$U_K\subseteq X_K:=\sm(C_K/K)$. Let $X$ be the N\'eron model of 
$X_K$ over $S$, {equal to the smooth locus $C_{\sm}$ of $C/S$} (Theorem~\ref{pj-nm}). The minimal regular
model $C$ commutes with strict henselization and completion
(\cite{Liu}, 9.3.28), so by Theorem~\ref{nm-open-st}, 
for any {closed point} $s\in S$, $U(s)$ is of finite type over $\cO_{S,s}$  if 
and only if $\Delta_K(K_s^{\sh})=\emptyset$. Now consider the connectedness
at a closed point $s\in S$. Shrinking $S$ if necessary, we can suppose
$X_s{=C_{\sm,s}}$ is connected. If $\Delta_s\cap C_{\sm,s}(k(s)^{\mathrm{sep}})=\emptyset$,
then $U_s=X_s\setminus \Delta_s$ (Proposition~\ref{nm-open-st} (3)) is connected. 
Conversely, suppose $U_s$ is connected. By separatedness, 
$U_s\to X_s$ is an open immersion. Then over $\cO_{S,s}$, $U\to X$ is
an open immersion, as $X\setminus U$ contains $\Delta_K$, hence 
$U\subseteq X\setminus \Delta$. By (\ref{X-U}), we have 
$U=X\setminus \Delta$ (over $\cO_{S,s}$), thus 
$\Delta_s\cap C_{\sm,s}(k(s)^{\mathrm{sep}})=\emptyset$ (Theorem~\ref{nm-open-st}~(3)).

Now suppose $C_K$ is a smooth conic. Let $s\in S$ be a closed point.
If $\Delta_K(K_s^{\sh})=\emptyset$, then $U(s)$ is of finite type
over $\cO_{S,s}$ by Proposition~\ref{prop-rat-nm2}. 
Conversely, suppose $\Delta_K(K_s^{\sh})\ne\emptyset$. 
It is enough to show $U(s)$ is not of finite type over $\cO_{S,s}^{\sh}$. 
Thus we can suppose $K$ strictly henselian and $\Delta_K(K)\ne\emptyset$. If there are two
rational points in $\Delta_K$, then $U_K$ is isomorphic to an
open subscheme of $\mathbb G_{m,K}$, hence $U(s)$ is not of
finite type by Theorem~\ref{nm-open-st} (1). Otherwise, there exists a non-rational point $q_\infty\in \Delta_K$. We apply again 
Theorem~\ref{nm-open-st} (1) to $U_K\subset C_K\setminus\{ q_\infty\}$
to conclude that $U(s)$ is not of finite type. {So again $U(s)$ is of finite type if and only if $\Delta_K(K_s^{\sh})=\emptyset$}. 

To see the {equivalence between Condition (b) above and the second condition of (1)}, we are allowed to shrink $S$ and suppose 
that $C/S$ is smooth with connected fibers. The {condition} $\Delta_s\cap C_{\sm,s}(k(s)^{\mathrm{sep}})=\emptyset$ then implies that 
$C\setminus \Delta$ is the N\'eron model of $U_K$ over $\cO_{S,s}$
(see Claim in the proof of Proposition~\ref{prop-rat-nm2}). Thus $U=C\setminus \Delta$ is 
of finite type {with connected fibers} over $S$. Conversely, suppose $U/S$ is of 
finite type (up to replace $S$ by some open dense subset). Shrinking $S$ if necessary, the isomorphism 
$U_K\to (C\setminus \Delta)_K$ extends to an isomorphism $U\cong C\setminus \Delta$. If there exists $x\in \Delta_s\cap C_{\sm,s}(k(s)^{\sep})$, then there exists 
$y\in U_K(K_s^{\sh})=(C\setminus \Delta)_K(K_{s}^{\sh})$ which 
does not specializes to $U_s$. Contradiction 
with the N\'eron mapping property of $U$. 

(2) As $S$ is infinite, $\Delta_K(K^{\sep})\neq\emptyset$ if and only if $\Delta_K(K_{s}^{\sh})\neq\emptyset$ for some (hence for almost all) $s\in S$. Thereby (2) follows directly from (1).  
\end{proof}

\begin{remark}\label{not-ft} In general it is not true that if $\Delta_K(K^{\sep})=\emptyset$ then $U$ is of finite type. One can construct examples similar to that of
  Oesterl\'e (\cite{BLR}, 10.1/11), showing that the existence of
  local N\'eron models does not imply the existence of global N\'eron
  model.  Let $S$ be an excellent Dedekind scheme of characteristic
  $p>0$ with infinitely many closed points, such that each closed
  point of $S$ has perfect residue field (for example, take $S$ a
  smooth algebraic curve over a perfect field of
  characteristic $p$). Let $q_{\infty}\in \mathbb P_{K}^{1}$ be a purely
  inseparable closed point of degree $>1$. Let $U_K=\mathbb 
  P^1_K\setminus \{  q_\infty\}$ and let $U$ be its $S$-N\'eron 
model. Then $U\times_S \Spec(\cO_{S,s})$ is of finite type. But as $U_s$ has 
  two connected components for almost all $s\in S$, 
$U$ is not of finite type over $S$ by Proposition~\ref{nm-ft} (3). 
\end{remark}
\end{subsection}
\end{section}

\end{document}